\newcommand{\coap}{}
\newcommand{\paoc}{\nonumber\\&\quad}
\newcommand{\coapnquad}{}
\newcommand{\coapAmp}{}
\newcommand{\coapDD}{}
\newcommand{\aCoapLine}{}
\newcommand{\ourqed}{}
\newcommand{\coapversion}[1]{}
\newcommand{\preprintversion}[1]{#1}
\newcommand{\myc}[1]{#1}
\title{Single-Forward-Step Projective Splitting: Exploiting Cocoercivity}
\author{Patrick R. Johnstone\thanks{Department of Management Science 
		and Information Systems, Rutgers Business
		School Newark and New Brunswick, Rutgers University. Contact: \href{mailto::patrick.r.johnstone@gmail.com}{patrick.r.johnstone@gmail.com}, \href{mailto::jeckstei@business.rutgers.edu}{jeckstei@business.rutgers.edu}}
	\and 
	Jonathan Eckstein$^*$}
\begin{document}  
 
\maketitle

\begin{abstract} 
This work describes a new variant of projective splitting for solving \myc{maximal} monotone 
inclusions \myc{and complicated convex optimization problems}. \myc{In the new
version}, cocoercive
operators can be processed with a single forward step per iteration. 
\myc{ In the convex optimization context, cocoercivity is equivalent to Lipschitz
differentiability.}
\myc{Prior forward-step versions of projective splitting did not fully exploit 
cocoercivity and required \emph{two} forward steps per iteration for such operators. 
Our new single-forward-step method} establishes a symmetry between projective
splitting algorithms, the classical forward-backward splitting method (FB),
and Tseng's forward-backward-forward method (FBF).
%Another symmetry is that 
The new procedure allows for larger stepsizes for cocoercive
operators: the stepsize bound is $2\beta$ for a $\beta$-cocoercive operator,
\myc{the same bound as has been established} for FB. We show that FB
corresponds to an unattainable boundary case of the parameters in the new
procedure. Unlike FB, the new method allows for a backtracking procedure when
the cocoercivity constant is unknown.
Proving convergence of the algorithm requires some departures from the \myc{prior}
proof framework for projective splitting.
We close with some computational tests establishing competitive performance 
for the method. 
\coapversion{\keywords{proximal operator splitting \and projective splitting \and convex nonsmooth optimization}}
\end{abstract}

\section{Introduction} 
\subsection{Problem Statement}
For a collection of real Hilbert spaces $\{\calH_i\}_{i=0}^n$ consider the \emph{finite-sum convex minimization problem}:
\begin{eqnarray}\label{ProbOpt}
\min_{x\in\mathcal{H}_{0}} \sum_{i=1}^{n}\big(f_i(G_i x)+h_i(G_i x)\big),
\end{eqnarray}
where every $f_i:\calH_i\to(-\infty,+\infty]$ and $h_i:\calH_i\to\rR$ is closed, proper, and convex, every $h_i$ 
is also differentiable with $L_i$-Lipschitz-continuous gradients, and the operators $G_i:\calH_0\to\calH_i$ are linear and bounded. 
% Special cases of problem~\eqref{ProbOpt} arise in a host of
% applications such as machine learning, signal and image processing, inverse
% problems, and computer vision; 
% % to name only a few (see
% see~\cite{boyd2011distributed,combettes2011proximal,combettes2005signal} 
% for some examples. 
Under appropriate constraint qualifications, \eqref{ProbOpt} is equivalent to the  monotone inclusion problem of finding $z\in\calH_0$ such that
\begin{align}\label{prob1}
0\in \sum_{i=1}^n G_i^*\left(A_i  + B_i\right) G_iz
\end{align} 
where all $A_i:\calH_i\to2^{\calH_i}$ and $B_i:\calH_i\to\calH_i$ are maximal monotone and each  $B_i$ is $L_i^{-1}$-cocoercive, meaning that it is single-valued and
\begin{align}\nonumber 
L_i \langle B_i x_1 - B_i x_2,x_1-x_2\rangle\geq \|B_i x_1-B_i x_2\|^2
\end{align}
for some $L_i\geq 0$. (When $L_i=0$, $B_i$ must be a constant operator, that
is, there is some $v_i\in\calH_i$ such that $B_i x = v_i$ for all
$x\in\calH_i$. ) In particular, if we set $A_i = \partial f_i$ \myc{(the
subgradient map of $f_i$)} and $B_i = \nabla h_i$  \myc{(the gradient of
$h_i$)} then the solution sets of the two problems coincide under a special
case of the constraint qualification of
\cite[Prop.~5.3]{combettes2013systems}.

Defining $T_i
= A_i + B_i$ for all $i$, problem~\eqref{prob1} may be written as
\begin{align}
\label{prob2}
0\in \sum_{i=1}^n G_i^*T_i G_iz.
\end{align}
\myc{This more compact problem statement will be used occasionally in our analysis
below}.

%\preprintversion{
\myc{ 
\subsection{Background}
Operator splitting algorithms are an effective way to solve structured convex
optimization problems and monotone inclusions such as \eqref{ProbOpt},
\eqref{prob1}, and \eqref{prob2}. Their defining feature is that they decompose
a problem into a set of manageable pieces.  Each iteration consists of
relatively easy calculations confined to each individual component of the
decomposition, in conjunction with some simple coordination operations 
orchestrated to converge to a solution.
%PJ: made some edits to above sentence which I found too "wordy"
% At each iteration, they perform a set of 
% relatively low-complexity calculations and solve a set of
% tractable subproblems in such a manner as to converge to a solution of the
% overall problem. 
Arguably the three most popular classes of operator splitting algorithms
are the forward-backward splitting (FB) \cite{combettes2011proximal},
Douglas/Peaceman-Rachford splitting (DR) \cite{lions1979splitting}, and
forward-backward-forward (FBF) \cite{tseng2000modified} methods. 
Indeed, many
%sophisticated 
algorithms in convex optimization and monotone inclusions are in fact
instances of one of these methods. The popular Alternating Direction Method of
Multipliers (ADMM), in its standard form, can be viewed as a dual
implementation of DR \cite{gabay1983chapter}.% applied to the appropriate underlying inclusion problem.

Projective splitting is a relatively recent and currently less well-known
class of operator splitting methods, operating in a primal-dual space.  Each
iteration $k$ of these methods explicitly contructs an affine ``separator''
function $\varphi_k$ for which $\varphi_k(p) \leq 0$ for every $p$ in the set
$\calS$ of primal-dual solutions.  The next iterate $p^{k+1}$ is then obtained
by projecting the current iterate $p^k$ onto the halfspace defined by
$\varphi_k(p) \leq 0$, possibly with some over- or under-relaxation.
Crucially, $\varphi_k$ is obtained by performing calculations that consider
each operator $T_i$ separately, so that the procedures are indeed operator
splitting algorithms.  In the original formulations of projective
splitting~\cite{eckstein2008family,eckstein2009general}, the calculation
applied to each operator $T_i$ was a standard resolvent operation, also known
as a ``backward step''.  Resolvent operations remained the only way to process
individual operators as projective splitting was generalized to cover
compositions of maximal monotone operators with bounded linear
maps~\cite{alotaibi2014solving} --- as in the $G_i$ in~\eqref{prob2} --- and
block-iterative (incremental) or asynchronous calculation
patterns~\cite{combettes2016async,eckstein2017simplified}. Convergence rate
and other theoretical results regarding projective splitting may be found in
\cite{johnstone2018projective2,johnstone2018convergence,machado2017complexity,machado2016projective}.}

% A different and relatively recently proposed class of operator splitting
% algorithms is \emph{projective splitting}. This class has a
% different convergence analysis than most operator splitting schemes: while the
% convergence of most schemes is obtained by viewing their updates as a
% fixed-point iteration, often of some firmly non-expansive operator, projective
% splitting is analyzed (and designed) as a way of creating a sequence of
% separating hyperplanes between the current iterate and the primal-dual
% solution set. New iterates are produced by projecting onto these hyperplanes,
% perhaps with some over- or under-relaxation. {}
%
% The class of methods originated
%
% Projective splitting methods originated in \cite{eckstein2008family}, and were
% extended to sums of more than two operators in \cite{eckstein2009general}, to
% include compositions with bounded linear maps in \cite{alotaibi2014solving},
% to allow asynchronous block-iterative (\emph{i.e.}~incremental) updates in
% \cite{combettes2016async}, and to allow forward steps for Lipschitz continuous
% operators in \cite{tran2015new,johnstone2018projective}. }%}

%\coapversion{To solve~\eqref{prob1}, this paper proposes a new algorithm within
%the projective splitting class of methods.  This class originated
%with~\cite{eckstein2008family,eckstein2009general} and received an important
%extension in~\cite{alotaibi2014solving}.} 

\myc{The algorithms in~\cite{tran2015new,johnstone2018projective} were the first to
construct projective splitting separators by applying calculations other than
resolvent steps to the operators $T_i$.  In particular,
\cite{johnstone2018projective} developed a procedure that could instead use
two \emph{forward} (explicit or gradient) steps for operators $T_i$ that are
Lipschitz continuous.
% This innovation constituted
% significant progress because forward steps are often computationally much
% cheaper and more convenient than resolvents. 
However, that} result raised a
question: if projective splitting can exploit Lipschitz continuity, can it
further exploit the presence of \emph{cocoercive} operators? Cocoercivity is
in general a stronger property than Lipschitz continuity. However, when an
operator is the gradient of a closed proper convex function (such as
$h_i$ in \eqref{ProbOpt}), the Baillon-Haddad theorem
\cite{baillon1977quelques,bauschke2009baillon} establishes that the two
properties are equivalent: $\nabla h_i$ is $L_i$-Lipschitz
continuous if and only if it is $L_i^{-1}$-cocoercive.

Operator splitting methods that exploit cocoercivity rather than mere
Lipschitz continuity typically have lower per-iteration computational
complexity and a larger range of permissible stepsizes. For example, both 
FBF 
and the extragradient (EG) method~\cite{korpelevich1977extragradient} only
require Lipchitz continuity, but need \emph{two} forward steps per iteration
and limit the stepsize to $L^{-1}$, where $L$ is the Lipschitz constant. If one
strengthens the assumption to $L^{-1}$-cocoercivity, one can instead use
FB, which only
needs one forward step per iteration and allows stepsizes bounded away from
$2L^{-1}$. One departure from this pattern is the recently developed
method of \cite{tam2018forward}, which only requires Lipschitz continuity but
uses just one forward step per iteration. While this property is remarkable,
it should be noted that its stepsizes must be bounded by $(1/2)L^{-1}$,
which is half the allowable stepsize for EG or FBF \myc{and just a fourth of FB's stepsize range}.

Much like EG and FBF, the projective splitting computation in
\cite{johnstone2018projective} requires Lipschitz continuity\footnote{If
backtracking is used, then all three of these methods can converge under weaker
local continuity assumptions.}, two forward steps per iteration, and limits
the stepsize to be less than $L^{-1}$ (when not using backtracking).
Considering the relationship between FB and FBF/EG leads to the following question:
is there a variant of projective splitting which converges under the stronger
assumption of $L^{-1}$-cocoercivity, while processing each cocoercive operator
with a single forward step per iteration and allowing stepsizes bounded above by
$2L^{-1}$?

\myc{This paper shows that the answer to this question is ``yes''.}
Referring to
\eqref{prob1}, the new procedure \myc{analyzed here} 
requires one forward step on~$B_i$ and one
resolvent for~$A_i$ at each iteration.
\myc{In the context of \eqref{ProbOpt}, the new procedure requires one forward step
on $\nabla h_i$ and one proximal operator evaluation on $f_i$.} When the
resolvent is easily computable (for example, when $A_i$ is the zero map and
its resolvent is simply the identity), the new procedure can effectively halve
the computation necessary to run the same number of iterations as the previous
procedure of \cite{johnstone2018projective}. This advantage is equivalent to
that of FB over FBF and EG when cocoercivity is present. Another advantage of
the proposed method is that it allows for a backtracking linesearch when the
cocoercivity constant is unknown, whereas \myc{no such variant of general 
cocoercive FB is currently known}.

% \myc{ 
% As with the prior versions of projective splitting
% in~\cite{alotaibi2014solving,combettes2016async,johnstone2018projective,eckstein2017simplified},
% the linear terms $G_i$ composed with each function can be dealt with in a
% convenient way. At each iteration of the method, one only needs to compute two
% matrix-vector multiplications involving $G_i$ and $G_i^*$, respectively. These
% matrices do not need to be inverted and no assumptions are necessary on their
% rank, row spaces, or column spaces.  In the calculations, the matrices are
% separated from the resolvent operations on the $A_i$ --- that is, one may
% consider only the potentially simpler resolvents of the $A_i$ and not have to
% evaluate resolvents of the operators $G_i^* \circ A_i \circ G_i$.  This
% feature, in conjunction with handling the operators
% $A_1,\ldots,A_n,B_1,\ldots,B_n$ separately from one another, means that the
% method can attain ``full splitting'', that is, the maximum possible decoupling
% of the problem elements in~\eqref{prob1}. }
%\coapversion{Like other recent work on projective splitting, our algorithm can
%``fully split'' problems like~\eqref{prob1} and~\eqref{prob2}, decoupling the
%processing of the operators $A_i$, $B_i$, and $G_i$.  It uses forward steps
%for the $B_i$, resolvents for the $A_i$, and matrix multiplications (rather than
%some form of factorization or inversion) for the $G_i$.}

The analysis of this new method is significantly different from our previous
work in \cite{johnstone2018projective}, using a novel ``ascent lemma'' (Lemma
\ref{lemAscent1}) regarding the \myc{separators} generated by the
algorithm.
%
% The previous technique of establishing
% that the current point is on the other side of the separating hyperplane than
% the solution set no longer applies and needed to be transcended. The new
% analysis is based on an ``ascent lemma" (Lemma \ref{lemAscent1}) which relates
% the current value of the separating affine function to the previous value in
% such a way that overall convergence can still be established. The heart of the
% analysis depends on three elementary lemmas on recursive sequences (Lemmas
% \ref{lemBounddSeq}--\ref{lemBoundBelow}).
%If some of the operators $B_i$ are merely Lipschitz continuous but not
%cocoercive, then it is still possible to use the old two-forward-step
%procedure from \cite{johnstone2018projective} for those operators. The
%overall analysis incorporates both types of updates rather seamlessly.
%
The new procedure also
has an interesting connection to the original resolvent
calculation used in the projective splitting papers
\cite{eckstein2008family,eckstein2009general,alotaibi2014solving,combettes2016async}: in Section \ref{secConnect2Prox}
below, we show that the new procedure is equivalent to one iteration of FB
applied to evaluating the resolvent of~$T_i = A_i+B_i$.  That is, we can use
a single forward-backward step to approximate the operator-processing procedure 
of~\cite{eckstein2008family,eckstein2009general,alotaibi2014solving,combettes2016async}, but still obtain convergence.
% Computing the resolvent
% w.r.t.~$T_i$ is difficult in general as it is a sum of two operators. It is
% rather suprising that only one iteration of FB is necessary to develop an
% overall convergent scheme.

The new procedure has significant potential for asynchronous and
incremental implementation following the ideas and techniques of previous
projective splitting methods
\cite{combettes2016async,eckstein2017simplified,johnstone2018projective}.
To keep the analysis \myc{relatively} manageable, however, we plan to develop such
generalizations in a follow-up paper.  Here, we will simply assume that every
operator is processed once per iteration.

%\myc{Numerical experiments comparing the new method with several existing schemes are conducted in Section \ref{secNum}. All code can be found at \url{https://github.com/projective-splitting/coco}.}

%Finally Section \ref{secLip} shows that the old procedure from
%\cite{johnstone2018projective} can still be used for
%merely-Lipschitz-continuous operators.
 
% The outline of the rest of the paper is as follows: in Section \ref{secOptCon}
% we give some remarks about our method in the context of solving the
% optimization problem \eqref{ProbOpt}. Section \ref{sec_proj_split} recaps the
% basics of projective splitting, introduces the new one-forward-step procedure,
% and discusses its genesis. Section \ref{sec_algAlg} gives the precise
% statement of the algorithm and the assumptions we make on
% problem~\eqref{prob1}. The main convergence result (Theorem \ref{thmMain}) and
% its proof are in Section \ref{sec_main_proof}. Finally, Section \ref{secNum}
% presents numerical experiments on three different problems from portfolio
% optimization, bioinformatics, and machine learning.

%\vspace{0.2cm}
%\noindent 
%{\bf Notation}

%\vspace{0.2cm}
%\noindent

\myc{
\subsection{The Optimization Context}\label{secOptCon}

	% Due to its importance in many applications, we give a brief discussion of
	%  the significance of the proposed method in the optimization context,
	%  i.e.~solving \eqref{ProbOpt}. 
	%  For this problem, 

For optimization problems of the form~\eqref{ProbOpt}, our proposed method is
a first-order proximal splitting method that ``fully splits'' the problem: at
each iteration, it utilizes the proximal operator for each nonsmooth function
$f_i$, a single evaluation of the gradient $\nabla h_i$ for each smooth
function $h_i$, and matrix-vector multiplications involving $G_i$ and $G_i^*$.
There is no need for any form of matrix inversion, nor to use resolvents of
composed functions like $f_i \circ G_i$, which may in general be much more
challenging to evaluate than resolvents of the $f_i$.  Thus, the method
achieves the maximum possible decoupling of the elements of~\eqref{ProbOpt}.
There are also no assumptions on the rank, row spaces, or columns spaces of
the $G_i$. Beyond the basic resolvent, gradient, and matrix-vector
multiplication operations invoked by our algorithm, the only computations at
each iteration are a constant number of inner products, norms, scalar
multiplications, and vector additions, all of which can all be carried out
within flop counts linear in the dimension of each Hilbert space. }

%\vspace{0.1cm}
%\noindent 
%{\bf Primal-Dual Splitting Methods}	
%\vspace{0.05cm}

%\noindent 
\myc{Besides projective splitting approaches, there
	are a few first-order proximal splitting methods that} can achieve full
	splitting on~\eqref{ProbOpt}. The most similar to projective splitting are
	those in the family of \emph{primal-dual} (PD) splitting methods; see
	\cite{condat2013primal,combettes2012primal,chambolle2011first,PesquetAudrey2015}
	and references therein. In fact, projective splitting is also a kind of
	primal-dual method, since it produces \myc{primal and dual
	sequences jointly} converging to a primal-dual solution.  However, the
	convergence mechanisms are different: PD methods are usually constructed
	by applying an established operator splitting technique such as FB, FBF,
	or DR to an appropriately formulated primal-dual inclusion in a
	primal-dual product space, \myc{possibly with a specially chosen metric.
	Projective splitting methods instead work by projecting onto (or through)
	explicitly constructed separating hyperplanes in the primal-dual space.}

	There are \myc{several} potential
	advantages of our proposed method over the more established PD schemes.
	First, unlike the PD methods, the norms $\|G_i\|$ do not effect the
	stepsize constraints of our proposed method, making
	such constraints easier to satisfy. Furthermore, 
	\myc{projective splitting's}
	stepsizes may vary at each iteration and may differ for each operator.
	\myc{In general, 
		%furthermore, PJ: I found this too "wordy"
		} 
	projective splitting methods allow for asynchronous parallel and
	incremental implementations in an arguably simpler way than PD methods
	(although we do not develop this aspect 
	\myc{of projective splitting} in this paper). 
	\myc{Projective splitting methods can incorporate \emph{deterministic}
	block-iterative and 
	asynchronous assumptions \cite{combettes2016async,eckstein2017simplified},
	resulting in deterministic convergence guarantees}, with the analysis being 
	similar to the synchronous case. In contrast, existing asynchronous and
	block-coordinate analyses of PD methods require
	stochastic assumptions which only lead to probabilistic convergence
	guarantees~\cite{PesquetAudrey2015}.
	%Furthermore the maximum asynchrony delays do not effect the stepsize
	%constraints, in contrast to PD methods and several well-known
	%asynchronous optimization approaches
	%\cite{peng2016arock,recht2011hogwild}.
	
 \preprintversion{
 	\subsection{Notation and a Simplifying Assumption}
 	We use the same general notation as in
 	\cite{johnstone2018projective,johnstone2018convergence,johnstone2018projective2}.
 	Summations of the form $\sum_{i=1}^{n-1}a_i$ 
 	%for some collection $\{a_i\}$
 	will appear throughout this paper. To deal with the case $n=1$, we use the
 	standard convention that
 	$
 	\sum_{i=1}^{0} a_i = 0.
 	$
 	
 	We will use a boldface $\bw = (w_1,\ldots,w_{n-1})$ for elements of
 	$\calH_1\times\ldots\times\calH_{n-1}$. Let $\boldsymbol{\mathcal{H}}
 	\triangleq \mathcal{H}_0\times
 	\mathcal{H}_1\times\cdots\times\mathcal{H}_{n-1}$, which we refer to as the
 	``collective primal-dual space'', and note that the assumption on $G_n$
 	implies that $\calH_n = \calH_0$. We use $p$ to refer to points in $\bcalH$,
 	so $p\triangleq (z,\bw)=(z,w_1,\ldots,w_{n-1})$.
 	
 	Throughout, we will simply write $\|\cdot\|_i = \|\cdot\|$ as the
 	norm for $\calH_i$ and let the subscript be inferred from the argument. In the
 	same way, we will write $\langle\cdot,\cdot\rangle_i$ as $\langle\cdot
 	,\cdot\rangle$ for the inner product of $\calH_i$. For the collective
 	primal-dual space we will use a special norm
 	and inner product with its own subscript defined in \eqref{gammanorm}.
 }

\preprintversion{
	We use the standard ``$\rightharpoonup$" notation to denote weak convergence,
	which is of course equivalent to ordinary convergence in finite-dimensional
	settings.
}

\coapversion{

\subsection{Notation and a Simplifying Assumption}	

}

\myc{ 
For the definition of maximal monotone operators and their basic properties, we refer to \cite{bauschke2011convex}.	
}	
For any maximal monotone operator $A$ \myc{and scalar $\rho > 0$,} we will use the notation
 $
 J_{\rho A} \triangleq (I+\rho A)^{-1},
 $
 to denote the \emph{resolvent operator}, 
 also known as the backward or implicit step with respect to $A$.  Thus,
 \begin{eqnarray}\label{defprox2}
 x = J_{\rho A}(t) \quad \iff \quad x+\rho a = t \;\;\text{and}\;\; a\in Ax,
 \end{eqnarray}
 \myc{the $x$ and $a$ satisfying this relation being unique.}
 Furthermore, $J_{\rho A}$ is defined everywhere and
 $\text{range}(J_A) = \text{dom}(A)$ \cite[Prop. 23.2]{bauschke2011convex}. 
 \myc{ 
If $A = \partial f$ for a closed, convex, and proper function $f$, the resolvent is often referred to as the \emph{proximal operator} and written as $J_{\rho\partial f}=\prox_{\rho f}$. Computing the proximal operator requires solving
% the following subproblem:
\begin{align*}
\prox_{\rho f}(t) = \argmin_{z}\left\{
									\rho f(z)+\frac{1}{2}\|z-t\|^2
  						       \right\}.
\end{align*}
Many functions encountered in applications to machine learning and signal
processing have proximal operators which can be computed exactly with low
computational complexity. In this paper, for a single-valued maximal monotone
operator $A$, a \emph{forward step} (also known as an explicit step) refers to
the direct evaluation of $A x$ (or $\nabla f(x)$ in convex optimization) as
part of an algorithm. %Typically the mere evaluation of an operator will be more convenient and computationally cheaper than evaluating the resolvent/proximal operator.
}

 For the rest of the paper, we will impose the simplifying assumption
 \begin{align}\nonumber
 G_{n}:\mathcal{H}_{n}\to\mathcal{H}_{n} &\triangleq I\;\;\;
 \text{(the identity operator)}.
 \end{align}
 As noted in \cite{johnstone2018projective}, the requirement that $G_n = I$ is
 not a very restrictive assumption. For example, one can always enlarge the
 original problem by one operator, setting $A_n=B_n=0$. 
% Note that when $n=1$, $w_1^k = 0$.

\section{\coapversion{A New Form of }Projective Splitting}

\label{sec_proj_split}
%\subsection{The Basics}
%\subsection{Basic Idea}
\label{sec_proj_basic}
\coapversion{\subsection{Projective Splitting Basics \myc{in Detail}}}
The goal of our algorithm will be to find a point in
\begin{equation} \label{defCompExtSol}
\calS \triangleq \set{(z,w_1,\ldots,w_{n-1})\in\bcalH}{(\forall\,i\in\{ 1,\dots,n-1\})\; w_i\in T_i G_i z, 	
	\; \textstyle{-\sum_{i=1}^{n-1} G_i^* w_i \in T_n z}}.
\end{equation}
It is clear that $z^*$ solves \eqref{prob1}--\eqref{prob2} if and only if there
exist $w_1^*,\ldots,w_{n-1}^*$ such that 
$$
(z^*,w_1^*,\ldots,w_{n-1}^*)\in \calS. $$ Under reasonable assumptions, the
set $\calS$ is closed and convex; see Lemma \ref{lemClosed}. $\calS$ is often called the \emph{Kuhn-Tucker solution set} of problem \eqref{prob2}.
{}

\coapversion{At each iteration $k$, projective splitting algorithms work by using a decomposition
procedure to construct a halfspace $H_k \subset \boldsymbol{\mathcal{H}}
\triangleq \mathcal{H}_0\times
\mathcal{H}_1\times\cdots\times\mathcal{H}_{n-1}$ that is guaranteed to
contain $\calS$.  Each new iterate 
$p^{k+1} \in \boldsymbol{\mathcal{H}}$ is
obtained by projecting the previous iterate 
$p^k = (z^{k},w_1^{k},\ldots,w_{n-1}^{k}) \in \boldsymbol{\mathcal{H}}$ 
onto $H_k$, with
possible over- or under-relaxation.}
\preprintversion{A separator-projector algorithm for finding a point in
$\calS$ (and hence a solution to \eqref{prob2}) will, at each iteration $k$,
find a closed and convex set $H_k$ which separates $\calS$ from the current
point, meaning $\calS$ is entirely in the set (preferably, the current point
is not). One can then attempt to ``move closer" to the solution set by
projecting the current point onto the set $H_k$. This general setup guarantees
that the sequence generated by the method is \emph{Fej\'{e}r monotone}
\cite{combettes2000fejerXX} with respect to $\calS$. This alone is not
sufficient to guarantee that the iterates actually converge to a point in the
solution set. To establish this, one needs to show that the set $H_k$
``sufficiently separates" the current point from the solution set, or at least
does so sufficiently often.  Such ``sufficient separation'' allows one to
establish that any weakly convergent subsequence of the iterates must have its
limit in the set $\calS$, from which overall weak convergence follows from
\cite[Prop.~2]{combettes2000fejerXX}.
}

With $\calS$ as in~\eqref{defCompExtSol}, the separator formulation 
presented in~\cite{combettes2016async} constructs 
the halfspace $H_k$ using the function $\varphi_k:\boldsymbol{\mathcal{H}}\to\mathbb{R}$ defined as 
\begin{align} 
\coapAmp\varphi_k(z,w_1,\ldots,w_{n-1}) 
\aCoapLine 
\label{hplane}
&\coapversion{\qquad\qquad}\triangleq \sum_{i=1}^{n-1}
                     \inner{G_i z-x^k_i}{y^k_i-w_i} 
                    +\Inner{z-x_i^n}{y_i^n + \sum_{i=1}^{n-1} G_i^* w_i}\\
\label{hplane_alt}
&\coapversion{\qquad\qquad}= \Inner{z}{\sum_{i=1}^n G_i^* y_i^k} 
+ \sum_{i=1}^{n-1}\inner{x_i^k - G_i x_n^k}{w_i}
          - \sum_{i=1}^n \inner{x_i^k}{y_i^k},
\end{align} 
for some \myc{auxiliary points} ($x_i^k,y_i^k) \in \calH_i^2$. These points ($x_i^k,y_i^k$) will be specified later and must be
\myc{chosen at each iteration} in a specific 
\myc{manner guaranteeing the validity of the separator and} convergence to $\calS$. 
\myc{Among other properties}, they must be chosen so 
that $y^k_i\in T_i x^k_i$ for $i = 1,\ldots,n$. \myc{Under this condition,
it follows readily that $\varphi_k$ has the promised separator properties:}

\myc{
%% Decided it would be clearly to just prove this than talk about it
\begin{lemma} \label{lem:sepvalid}
The function $\varphi_k$ defined in~\eqref{hplane} is affine, and if $y^k_i\in
T_i x^k_i$ for all $i = 1,\ldots,n$, then $\varphi_k(z,w_1,\ldots,w_{n-1})
\leq 0$ for all $(z,w_1,\ldots,w_{n-1}) \in
\calS$.
\end{lemma}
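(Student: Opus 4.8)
The plan is to treat the two assertions separately, as they rely on different facts. The affine claim should follow at once from the rewritten form~\eqref{hplane_alt}: there each of the variables $z$ and $w_1,\ldots,w_{n-1}$ occurs inside exactly one inner product, hence linearly, while the trailing term $-\sum_{i=1}^n \inner{x_i^k}{y_i^k}$ depends only on the fixed auxiliary points and is therefore constant. Thus $\varphi_k$ is the sum of a bounded linear functional and a constant, i.e.\ affine, and I would simply invoke~\eqref{hplane_alt} to record this.

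For the separator inequality, the key structural observation is that~\eqref{hplane} is already displayed as a sum of $n$ terms, each of which can be paired with a monotonicity inequality for one operator $T_i$. First I would fix an arbitrary $(z,w_1,\ldots,w_{n-1}) \in \calS$ and unpack what membership means via~\eqref{defCompExtSol}: we have $w_i \in T_i G_i z$ for $i = 1,\ldots,n-1$, together with $-\sum_{i=1}^{n-1} G_i^* w_i \in T_n z$. Combined with the hypothesis $y_i^k \in T_i x_i^k$, this places two points in the graph of each $T_i$, which is exactly what monotonicity needs as input.

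The heart of the argument is then to apply monotonicity term by term. For each $i \leq n-1$, the pairs $(x_i^k,y_i^k)$ and $(G_i z, w_i)$ lie in the graph of the monotone operator $T_i$, so $\inner{G_i z - x_i^k}{w_i - y_i^k} \geq 0$, i.e.\ $\inner{G_i z - x_i^k}{y_i^k - w_i} \leq 0$. For the final term, the pairs $(x_n^k, y_n^k)$ and $\bigl(z, -\sum_{i=1}^{n-1} G_i^* w_i\bigr)$ lie in the graph of $T_n$, so monotonicity yields $\Inner{z - x_n^k}{\bigl(-\sum_{i=1}^{n-1} G_i^* w_i\bigr) - y_n^k} \geq 0$, equivalently $\Inner{z - x_n^k}{y_n^k + \sum_{i=1}^{n-1} G_i^* w_i} \leq 0$. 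Summing these $n$ nonpositive quantities reproduces~\eqref{hplane} verbatim, giving $\varphi_k(z,w_1,\ldots,w_{n-1}) \leq 0$.

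I do not expect a genuine obstacle: the separator has been engineered precisely so that each summand is a monotonicity residual, and the proof is mostly bookkeeping. The one point deserving care is the index-$n$ term, where one must recognize that the definition of $\calS$ supplies $-\sum_{i=1}^{n-1} G_i^* w_i$ as a $T_n$-image of $z$, so that the last inner product in~\eqref{hplane} is itself the $T_n$ monotonicity inequality and no further manipulation of the adjoints $G_i^*$ is required.
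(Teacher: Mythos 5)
Your proposal is correct and follows essentially the same route as the paper: affineness read off from~\eqref{hplane_alt}, and the separation inequality obtained by applying monotonicity of each $T_i$ to the pairs $(x_i^k,y_i^k)$ and $(G_iz,w_i)$ (with $(z,-\sum_{i=1}^{n-1}G_i^*w_i)$ for the index-$n$ term). The paper merely phrases the same computation by writing $\varphi_k(p)$ as the negative of a sum of nonnegative monotonicity inner products, which is just a cosmetic difference from your term-by-term nonpositivity argument.
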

\begin{proof}
That $\varphi_k$ is affine is clear from its expression
in~\eqref{hplane_alt}.  Now suppose that $y^k_i\in
T_i x^k_i$ for all $i = 1,\ldots,n$ and $p = (z,w_1,\ldots,w_{n-1}) \in
\calS$.  Then
\begin{align} \label{provesep}
\varphi_k(p)
&= 
- \left(\sum_{i=1}^{n-1} \inner{G_i z-x^k_i}{w_i-y^k_i} 
      + \Inner{z-x_i^n}{w_n - y_i^n}\right),
\end{align}
where
$
w_n \triangleq -\sum_{i=1}^{n-1} G_i^* w_i.
$
From $(z,w_1,\ldots,w_{n-1}) \in \calS$ and the definition of $\calS$, one has
that $w_i \in T_i z$ for all $i=1,\ldots,n-1$, as well as $w_n \in T_n z$.
Since $y_i \in T_i x_i$ for $i=1,\ldots,n$, it follows from the monotonicity
of $T_1,\ldots,T_n$ that every inner product displayed in~\eqref{provesep} is
nonnegative, and so $\varphi_k(p) \leq 0$.  \ourqed
\end{proof}

}

% From its
% expression in~\eqref{hplane_alt} it is clear that $\varphi_k$ is an affine
% function on $\bcalH$.  Furthermore, using \eqref{hplane}, 
% \myc{that $x_i^k$ and $y_i^k$ are chosen with $y^k_i\in T_i x^k_i$ for all $i$,
% the definition of $\calS$}, and the monotonicity of each $T_i$, 
% it may be easily
% verified that for any \myc{solution} $p^* =
% (z^*,w_1^*,\ldots,w_{n-1}^*) \in \calS$, one has $\varphi_k(p^*)\leq 0$. 
% Therefore the
% separator set $H_k$ may be taken to be the halfspace $\set{p}{\varphi_k(p)
% \leq 0}$. 

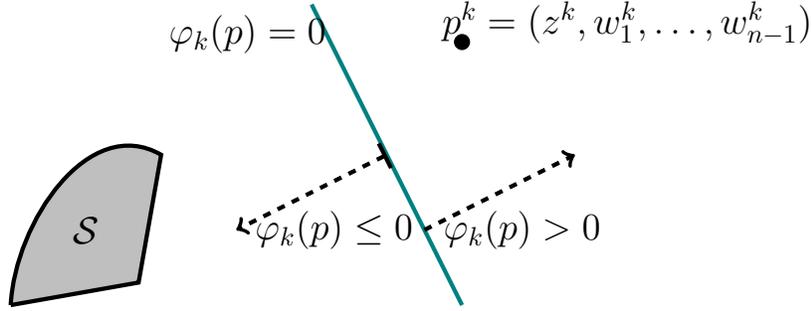
\begin{figure}
	\centering 
	{\large
	\begin{tikzpicture}
	\draw [ultra thick, fill=lightgray] (0,0) to [out=87,in=150] (2,2) 
	    -- (1.70,.30) -- (0,0);
	\node at (1,1) {$\calS$} ;
	\draw [ultra thick, teal] (4,4) -- (6,0) ;
	\draw [fill] (6,3.5) circle [radius=0.1] ;
	\node at (8.2,3.75) {$p^k = (z^{k},w_1^{k},\ldots,w_{n-1}^{k})$} ;
	\node at (3.15,3.6) {$\varphi_k(p) = 0$} ;
	\draw [ultra thick, dashed, ->] (5.5,1) -- (7.5,2) ;
	\node at (6.8,1) {$\varphi_k(p) > 0$} ;
	\draw [ultra thick, dashed, |->] (5,2) -- (3,1) ;
	\node at (4.3,1) {$\varphi_k(p) \leq 0$} ;
	\end{tikzpicture}
	}
	% \\ 	\includegraphics[width=0.9\textwidth]{myProjHP4.png}
	\caption{\myc{Properties of the hyperplane
	$\set{p\in\boldsymbol{\mathcal{H}}}{\varphi_k(p) = 0}$ obtained from
	the affine function $\varphi_k$. This hyperplane is the boundary of the
	halfspace $H_k$, and it always holds that $\varphi_k(p^*) \leq 0$ for
	every $p^*\in\calS$.  When $\varphi_k(p^k) > 0$ (as shown), the hyperplane
	separates the current point $p^k$ from the solution set $\calS$.}}
	\label{fig:hplane}
\end{figure}
\begin{figure}
    \centering				 
	{\large
	\begin{tikzpicture}
	\draw [ultra thick, teal] (1.6,3.75) -- (5.35,0) ;
	\draw [fill] (3.6,1.75) circle [radius=0.1] ;
	\draw [ultra thick, dashed, olive, ->] (4.6,2.75) -- (3.6,1.75) ;
	\draw [ultra thick, teal] (4,4) -- (6,0) ;
	\draw [fill] (4.6,2.75) circle [radius=0.1] ;
	\draw [ultra thick, dashed, olive, ->] (6,3.5) -- (4.6,2.75) ;
	\draw [fill] (6,3.5) circle [radius=0.1] ;
	\node at (8.2,3.75) {$p^k = (z^{k},w_1^{k},\ldots,w_{n-1}^{k})$} ;
	\node at (3.15,3.6) {$\varphi_k(p) = 0$} ;
%	\draw [fill] (4.6,2.75) circle [radius=0.1] ;
	\node at (5.3,2.65) {$p^{k+1}$} ;
	\node at (4.27,1.77) {$p^{k+2}$} ;
	\node at (1.15,2.8) {$\varphi_{k+1}(p) = 0$} ;
	\draw [ultra thick, fill=lightgray] (0,0) to [out=87,in=150] (2,2) 
	    -- (1.70,.30) -- (0,0);
	\node at (1,1) {$\calS$} ;
	\end{tikzpicture}
	}
 	\caption{\myc{The basic operation of the method. Each iteration $k$
 		constructs  a separator $\varphi_k$ as shown in
 		Figure~\ref{fig:hplane} and then obtains the next iteration by
 		projecting onto the halfspace $H_k =
 		\set{p\in\boldsymbol{\mathcal{H}}}{\varphi_k(p) \leq 0}$, within which
 		the solution set $\calS$ is known to lie.
    }}
    \label{fig:operation}
\end{figure}
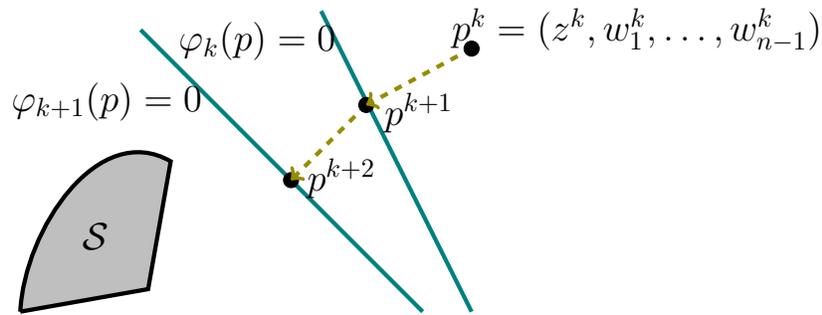 

\myc{
Figure~\ref{fig:hplane} presents a rough depiction of the current algorithm
iterate $p^k = (z^{k},w_1^{k},\ldots,w_{n-1}^{k})$ and the separator
$\varphi_k$ in the case that $\varphi_k(p^k) > 0$. The basic iterative cycle
pursued by projective splitting methods is:
\begin{enumerate}
\item \label{step:process} For each operator $T_i$, identify a pair $(x_i^k,y_i^k)\in\gra T_i$.  These pairs define an affine function $\varphi_k$ such that
$\varphi_k(p) \leq 0$ for all $p\in\calS$, using the
construction~\eqref{hplane} (or related constructions for variations of the
basic problem formulation).
\item \label{step:project}  Obtain the next iterate $p^{k+1}$ by projecting
the current iterate $p^k$ onto the halfspace $H_k \triangleq
\set{p}{\varphi_k(p) \leq 0}$, with possible over- or under-relaxation.
\end{enumerate}

Figure~\ref{fig:operation} presents a rough depiction of two iterations of
this process in the absence of over- or under-relaxation.  The projection
operation in part~\ref{step:project} of th{}e cycle is a straightforward
application of standard formulas for projecting onto a halfspace.  For the
particular formulation~\eqref{prob2}, the necessary calculations are derived
in~\cite{johnstone2018projective} and displayed in
Algorithm~\ref{AlgProjUpdate} below.  This projection is}
a low-complexity
operation involving only inner products, norms, matrix multiplication by $G_i$,
and sums of scalars. For example, when $\calH_i=\rR^d$ for $i=1,\ldots,n$ and each
$G_i=I$, then the projection step has computational complexity $\bigO(nd)$.

\myc{The key question in the design of algorithms 
in this class therefore concerns step~\ref{step:process} in the cycle: 
how might one select the points $(x_i^k,y_i^k)\in\gra
T_i$ so that convergence to $\calS$ may be established? 
The usual approach has been to choose
$(x^k_i,y^k_i)\in\gra T_i$ to be some function of $(z^k,w_i^k)$ such that
$\varphi_k(p^k)$ is positive and ``sufficiently large" whenever $p^k
\not\in \calS$. Then
%, since the solution set is entirely on the nonpositive
% side of the hyperplane $\set{p}{\varphi_k(p)=0}$, 
projecting the current point
onto this hyperplane makes progress toward the solution and can be shown to
lead (with some further analysis) to overall convergence.}  
In the original versions of projective
splitting, the calculation of $(x^k_i,y^k_i)$ involved (perhaps approximately)
evaluating a resolvent; later \cite{johnstone2018projective} introduced the
alternative of a two-forward-step calculation for Lipschitz continuous
operators \myc{that achieved essentially the same sufficient separation condition}.  %

%% JE  the following paragraph seemed to be getting a bit redundant
% In this work, we introduce a new procedure for updating $(x_i^k,y_i^k)$
% whenever \eqref{prob2} can be written as in \eqref{prob1}. Like the procedure
% introduced in \cite{johnstone2018projective}, the new procedure uses forward
% steps. However, unlike in \cite{johnstone2018projective}, we make the stronger
% assumption of cocoercivity, rather than Lipschitz continuity.

%This means that the procedure can reuse one previous forward step calculation
%and only needs one new forward step per iteration (rather than two, as in
%\cite{johnstone2018projective}). Furthermore the stepsize constraint is
%increased from $1/L_i$ to $2/L_i$.

Here, we introduce a one-forward-step calculation for the case
of cocoercive operators. A principal difference between this analysis and earlier work on
projective splitting is that processing all the operators $T_1,\ldots,T_n$ at iteration $k$ need not result in $\varphi_k(p^k)$ being positive.
%%  JE in the block iterative and async versions, it is possible that
%%  $\varphi_k(p^k)$ is nonpositive just because we didn't process the
%%  right operators; therefore what you wrote before was not exactly
%%  correct.
Instead, we establish an ``ascent lemma" that relates the values
$\varphi_k(p^{k})$ and $\varphi_{k-1}(p^{k-1})$ in such a way that overall
convergence may still be proved, even though it is possible that
$\varphi_k(p^k) \leq 0$ \myc{at some iterations $k$}.  
\myc{In particular,  $\varphi_k(p^k)$ will be larger than the previous value $\varphi_{k-1}(p^{k-1})$, up to some error term that vanishes as $k\to\infty$}.

When $\varphi_k(p^k) \leq 0$, projection onto $H_k =
\set{p}{\varphi_k(p) \leq 0}$ results in
$p^{k+1} = p^k$.  In this case, the algorithm continues to compute
new points $(x_i^{k+1},y_i^{k+1})$, $(x_i^{k+2},y_i^{k+2}),\ldots$ until, for
some $\ell \geq 0$, it constructs a hyperplane $H_{k+\ell}$ such that the
$\varphi_{k+\ell}(p^k) > 0$ and projection results in $p^{k+\ell+1} \neq
p^{k+\ell} = p^k$.

\subsubsection*{Additional Notation for Projective Splitting}
For an arbitrary $(w_1,w_2,\ldots,w_{n-1})
\in\calH_1\times\calH_2\times\ldots\times\calH_{n-1}$ we use the notation
\begin{align*}
w_{n} \triangleq - \sum_{i=1}^{n-1} G_i^*w_i,
\end{align*}
\myc{as in the proof of Lemma~\ref{lem:sepvalid}.}
Note that when $n=1$, $w_1=0$. 
Under \myc{the above} convention, 
we may write $\varphi_k:\bcalH\to\rR$ in the more compact
form
\begin{align*} 
\varphi_k(z,w_1,\ldots,w_{n-1}) &= \sum_{i=1}^{n}\inner{G_i z-x^k_i}{y^k_i-w_i} .
\end{align*} 
We also use the following notation for $i=1,\ldots,n$:
\begin{align*}
\varphi_{i,k}(z,w_i) \triangleq  \langle G_i z-x_i^k,y_i^k - w_i\rangle.
\end{align*}
Note that 
$\varphi_k(z,w_1,\ldots,w_{n-1})=\sum_{i=1}^n \varphi_{i,k}(z,w_i)$.

\preprintversion{
\subsection{The New Procedure}\label{sec_genesis}
Suppose $A_i=0$ for some $i\in\{1,\ldots,n\}$. Since $B_i$ is cocoercive, it is also Lipschitz continuous. In \cite{johnstone2018projective} we introduced the following two-forward-step update for Lipschitz continuous $B_i$:
\begin{align*}
x_i^k &= G_i z^k - \rho_i^k(B_i G_i z^k - w_i^k)
\\
y_i^k &= B_i x_i^k.
\end{align*}
Under $L_i$-Lipschitz continuity and the condition $\rho_i^k<1/L_i$, it is
possible to show that updating $(x_i^k,y_i^k)$ in this way leads to
$\varphi_{i,k}(z^k,w_i^k)$ being sufficiently positive to establish overall
convergence. Although we did not discuss it in \cite{johnstone2018projective},
this two-forward step procedure can be extended to handle nonzero $A_i$ in the
following manner:
\begin{align}\label{eqOldForward1prox}
x_i^k +\rho_i^k a_i^k
&=
G_i z^k - \rho_i^k(B_i G_i z^k - w_i^k):\quad a_i^k\in A_i x_i^k
\\\label{eqOldForward1prox2}
y_i^k &= a_i^k + B_i x_i^k.
\end{align}
Following \eqref{defprox2}, it is clear that~\eqref{eqOldForward1prox} is
essentially a resolvent calculation applied to its right-hand side $G_i z^k -
\rho_i^k(B_i G_i z^k - w_i^k)$. This type of update, with forward steps
and backward steps together, was introduced in \cite{tran2015new} for a more
limited form of projective splitting.

An obvious drawback of \eqref{eqOldForward1prox}--\eqref{eqOldForward1prox2}
is that it requires two forward steps per iteration, one to compute $B_i G_i
z^k$ and another to compute $B_i x_i^k$. The initial motivation for the current
paper was the following question: is there a way to reuse $B_i x_i^{k-1}$ so
as to avoid computing $B_i G_i z^k$ at each iteration, perhaps under the
stronger assumption of cocoercivity? With some effort we arrived at the
following update for each block $i=1,\ldots,n$ at each iteration $k\geq 0$:}
\coapversion{
\vspace{0.1cm}

\subsection{The New Procedure}
\myc{ 
Recall the original problem of interest \eqref{prob1}, which is related to \eqref{prob2} via $T_i = A_i + B_i$. 
}
At each iteration $k$ and for each block $i=1,\ldots,n$, we
propose to find a pair $(x_i^k,y_i^k)\in \gra T_i = \gra (A_i+B_i)$ conforming to the conditions}
\begin{align}\label{eqNewUpdate1}
x_i^k +\rho_i a_i^k
&=
(1-\alpha_i)x_i^{k-1}
+
\alpha_i G_i z^k
-
\rho_i
\left(
b_i^{k-1}
- w_i^k
\right):
\quad
a_i^k\in A_i x_i^k
\\\label{eqNewUpdate2a}
b_i^k &= B_i x_i^k
\\\label{eqNewUpdate2}
y_i^k &= a_i^k+b_i^k,
\end{align}
where $\alpha_i\in(0,1)$, $\rho_i\leq 2(1-\alpha_i)/L_i$, and $b_i^0 =
B_i x_i^0$.  Condition~\eqref{eqNewUpdate1} is readily satisfied by some
simple linear algebra calculations and a resolvent calculation
involving $A_i$.  
\myc{
In particular, referring to~\eqref{defprox2}, 
one may see that \eqref{eqNewUpdate1} is equivalent to computing
\begin{align*}
t &= 
(1-\alpha_i)x_i^{k-1}
+
\alpha_i G_i z^k
-
\rho_i
\left(
b_i^{k-1}
- w_i^k
\right)
\\
x_i^k 
&=
J_{\rho_i A_i}(
t
)
\\
a_i^k &= (1/\rho_i)\left(
t-x_i^k
\right).
\end{align*}
Following this resolvent calculation}, \eqref{eqNewUpdate2a} requires only an
evaluation (forward step) on $B_i$, and \eqref{eqNewUpdate2} is a simple
vector addition.
\preprintversion{In
comparison to~\eqref{eqOldForward1prox}, we have replaced $B_i G_i z^k$ with
the previously computed point $B_i x_i^{k-1}$. However, in order to establish
convergence, it turns out that we also need to replace $G_i z^k$ with a convex
combination of $x_i^{k-1}$ and $G_i z^k$.} 

\myc{The parameter $\rho_i$ plays the role of the stepsize in the resolvent
calculation.  It also plays the role of a forward (gradient) stepsize, since
it multiplies $-b_i^{k-1}$ in~\eqref{eqNewUpdate1}, and $b_i^{k-1}=B_i
x_i^{k-1}$ by~\eqref{eqNewUpdate2a}.  From the assumptions on $\alpha_i$ and
$\rho_i$ immediately following~\ref{eqNewUpdate2}, it follows that $\rho_i$
may be} made arbitrarily close to $2/L_i$ by setting $\alpha_i$ close to $0$.
However, in practice it may be better to use an intermediate value, such as
$\alpha_i=0.1$, since \myc{doing so causes} 
the update to make significant use of the
information in $z^k$, a point computed more recently than $x_i^{k-1}$.

Computing $(x_i^k,y_i^k)$ 
\myc{as proposed in~\eqref{eqNewUpdate1}-\eqref{eqNewUpdate2}
does not guarantee that the quantity}
$\varphi_{i,k}(z^k,w_i^k)$ is positive. 
% Instead, the upcoming analysis is
% based on an ``ascent lemma" (Lemma  \ref{lemAscent1}) allowing us to relate
% $\varphi_k(p^k)$ to $\varphi_{k-1}(p^{k-1})$. 
\preprintversion{In the next}\coapversion{In the remainder of this} section, we give
some intuition as to why \eqref{eqNewUpdate1}-\eqref{eqNewUpdate2}
nevertheless leads to convergence to $\calS$.

\preprintversion{\subsection{A Connection with the Forward-Backward Method}}
\label{secConnect2Prox}
%% JE seems pretty redundant, and the order in which we figured things
%% out may not be that critical
% After devising the new update \eqref{eqNewUpdate1}--\eqref{eqNewUpdate2} and
% the convergence proof, we realized that it has an interesting connection to
% the original update procedure used in projective splitting based on the
% resolvent. This connection provides some intuition for why the procedure
% works.

In the projective splitting literature preceeding
\cite{johnstone2018projective}, the pairs $(x_i^k, y_i^k)$ are solutions of
\begin{align}\label{eqOldProx}
x_i^k+\rho_i y_i^k = G_i z^k + \rho_i w_i^k:\quad y_i^k\in T_i x_i^k
\end{align}
for some $\rho_i>0$, which --- \myc{again} following \eqref{defprox2} --- is a resolvent
calculation. 
\preprintversion{It can be shown that the resulting $(x_i^k,y_i^k)\in\gra T_i$ are
such that $\varphi_{i,k}(z^k,w_i^k)$ is positive and sufficiently large to
guarantee overall convergence to a solution of \eqref{prob2}.}
Since the
stepsize $\rho_i$ in~\eqref{eqOldProx}
can be any positive number, let us replace $\rho_i$ with
$\rho_i/\alpha_i$ for some $\alpha_i\in(0,1)$ and rewrite
\eqref{eqOldProx} as
\begin{align}\label{eqOldProx2}
x_i^k+\frac{\rho_i}{\alpha_i} y_i^k = G_i z^k + \frac{\rho_i}{\alpha_i} w_i^k:\quad y_i^k\in T_i x_i^k.
\end{align}
The reason for this reparameterization will become apparent below. 

In this paper, $T_i = A_i+B_i$, with $B_i$ being cocoercive and $A_i$ maximal
monotone. For $T_i$ in this form, computing the resolvent as in
\eqref{eqOldProx} exactly may be impossible, even when the resolvent of $A_i$
is available. With this structure, $x_i^k$ in \eqref{eqOldProx2} satisfies:
\begin{align*} 
0 &
=\myc{
\frac{\rho_i}{\alpha_i} y_i^k + x_i^k 
-
\left(G_i z^k + \frac{\rho_i}{\alpha_i} w_i^k\right)}
\\
\implies
0&\in \frac{\rho_i}{\alpha_i} A_i x_i^k+ \frac{\rho_i}{\alpha_i} B_i x_i^k+ x_i^k - \left(G_i z^k + \frac{\rho_i}{\alpha_i} w_i^k\right)
\end{align*}
which can be rearranged to
$
0\in A_i x_i^k +\tilde{B}_i x_i^k,
$
where 
$$
\tilde{B}_i v = B_i v + \frac{\alpha_i}{\rho_i}\left(v - G_i z^k - \frac{\rho_i}{\alpha_i} w_i^k\right). 
$$ 
Since $B_i$ is $L_i^{-1}$-cocoercive, $\tilde{B}_i$ is
$(L_i+\alpha_i/\rho_i)^{-1}$-cocoercive
\cite[Prop.~4.12]{bauschke2011convex}. Consider the generic monotone inclusion
problem $0\in A_i x +\tilde{B}_i x$:
$A_i$ is maximal and $\tilde{B}_i$ is cocoercive, and thus one may solve the
problem with the forward-backward (FB) method~\cite[Theorem
26.14]{bauschke2011convex}. If one applies a single iteration of FB
initialized at $x_i^{k-1}$, with stepsize $\rho_i$, to the inclusion $0\in
A_i x +\tilde{B}_i x$, one obtains the calculation:
\begin{align*}
x_i^k &= 
J_{\rho_i A_i}
\left(
x_i^{k-1}
-
\rho_i
\tilde{B}_i x_i^{k-1}
\right)
\\
&=
J_{\rho_i A_i}
\left(
x_i^{k-1}
-
\rho_i
\left(
B_i x_i^{k-1}+
\frac{\alpha_i}{\rho_i}
\left(
x_i^{k-1} - G_i z^k - \frac{\rho_i}{\alpha_i} w_i^k
\right)
\right)
\right)
\\
&=
J_{\rho_i A_i}
\left(
(1-\alpha_i)x_i^{k-1}
+
\alpha_i G_i z^k
-
\rho_i
(
B_i x_i^{k-1}
- w_i^k
)
\right),
\end{align*}
which is precisely the update \eqref{eqNewUpdate1}. So, our proposed
calculation is equivalent to \emph{one} iteration of FB initialized at the previous
point $x_i^{k-1}$, applied to the subproblem of computing the resolvent in
\eqref{eqOldProx2}. Prior versions of projective splitting require computing
this resolvent either exactly or to within a certain relative error criterion,
which may be \myc{time consuming}.  
Here, we simply make a single FB step toward computing
the resolvent, which we will prove is sufficient for the projective splitting
method to converge to $\calS$. 
However, our stepsize restriction on $\rho_i$
will be slightly stronger than the natural stepsize limit that would arise when
applying FB to $0\in A_i x +\tilde{B}_i x$.

% If one were to apply the FB method to the inclusion $0\in A_i x +\tilde{B}_i
% x$, the stepsize constraint would be
% \begin{align*}
% \rho_i^k < \frac{2}{L_i+\frac{\alpha_i^k}{\rho_i^k}}
% =
% \frac{2\rho_i^k}{\rho_i^k L_i +\alpha_i^k}.
% \end{align*}
% Dividing both sides by $\rho_i^k$ and rearranging this yields the requirement that
% $
% \rho_i^k 
% <
% (2-\alpha_i^k)/L_i
% $
% which is weaker than our actual requirement: 
% $
% \rho_i^k\leq 2(1-\alpha_i^k)/L_i$. 
% However it is not suprising that we have a stronger requirement on the
% stepsize as our analysis follows a different route:  we are not simply running
% enough iterations of FB until we are ``close enough" to a solution of
% \eqref{prob2opProx2}. Nevertheless, the interpretation of our update as one
% iteration of FB applied to computing the resolvent in \eqref{eqOldProx2} does
% provide some intuition into why the scheme might work.

% %This connection also provides an explicit upper bound on $\alpha_i^k$. It can be shown that $\tilde{B}_i$ is XX-cocoercive.

%\input{projectMethods}

\section{The Algorithm}
\label{sec_algAlg}
%\subsection{Main Assumptions Regarding Problem (\ref{prob1})}
\label{secMainAss}
%In closing this section, we state our main assumption regarding~(\ref{prob1}). 

% Our precise assumptions regarding~\eqref{prob1} are
% as follows:
\subsection{Main Problem Assumptions and Preliminary Results}
\begin{assumption}
	\label{AssMonoProb}\label{assMono}	
	Problem~(\ref{prob1}) conforms to the following:
	\begin{enumerate}
	\item $\mathcal{H}_0 = \mathcal{H}_n$ and
	$\mathcal{H}_1,\ldots,\mathcal{H}_{n-1}$ are real Hilbert spaces.
	\item For $i=1,\ldots,n$, the operators
	$A_i:\mathcal{H}_{i}\to2^{\mathcal{H}_{i}}$ and
	$B_i:\mathcal{H}_i\to\mathcal{H}_i$ are monotone. Additionally each $A_i$
	is maximal.
	\item Each operator $B_i$ is either
	$L_i^{-1}$-cocoercive for some $L_i>0$
	\preprintversion{(and thus single-valued)}
	and $\dom B_i = \mathcal{H}_i$, or $L_i = 0$ and $B_ix = v_i$ for all
	$x\in\calH_i$ and some $v_i\in\calH_i$ 
	\myc{(that is, $B_i$ is a constant function)}.
	\item Each $G_i:\calH_{0}\to\calH_i$ for $i=1,\ldots,n-1$ is
	linear and bounded. 
	\item Problem~\eqref{prob1} has a solution, so the set $\calS$
	defined in
	\eqref{defCompExtSol} is nonempty.
	\end{enumerate}
\end{assumption}
\myc{ 
Problem \eqref{ProbOpt} will be equivalent to an instance of Problem \eqref{prob1} satisfying Assumption \ref{AssMonoProb} if each $f_i$ and $h_i$ is closed, convex, and proper, each $h_i$ has $L_i$-Lipschitz continuous gradients, and a special case of the constraint qualification in \cite[Prop.~5.3]{combettes2013systems} holds.
}
% Lipschitz continuous operators may still be handled via their proximal
% mappings if that is convenient. Thus the set $\Iback$ may include operators
% that are Lipschitz continuous. A specific example of such a choice is
% discussed in the numerical experiments of Section~\ref{secNumerical}.

\preprintversion{In order to apply a separator-projector algorithm, the target
set must be closed and convex. Establishing this for $\calS$ is very similar
to in our previous work \cite{johnstone2018projective}, which in turn follows
many earlier results.}

\begin{lemma}
\label{lemClosed}
Suppose Assumption \ref{AssMonoProb} holds. The set $\calS$ defined in \eqref{defCompExtSol} is closed and convex. 
\end{lemma}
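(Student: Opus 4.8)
The plan is to show that $\calS$ is the intersection of a collection of closed convex sets, each associated with one operator $T_i$, together with a linear constraint. Recall from \eqref{defCompExtSol} that
\[
\calS = \set{(z,w_1,\ldots,w_{n-1})\in\bcalH}{(\forall\,i\leq n-1)\; w_i\in T_i G_i z,\; -\textstyle\sum_{i=1}^{n-1}G_i^* w_i\in T_n z}.
\]
Using the notation $w_n \triangleq -\sum_{i=1}^{n-1}G_i^* w_i$, membership in $\calS$ amounts to requiring $w_i \in T_i G_i z$ for every $i=1,\ldots,n$, where $G_n = I$. The natural decomposition is therefore $\calS = \bigcap_{i=1}^n \calS_i$, where
\[
\calS_i \triangleq \set{(z,w_1,\ldots,w_{n-1})\in\bcalH}{w_i \in T_i G_i z}
\]
(with $w_n$ understood as the linear image of the $w_i$ in the case $i=n$). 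Since a finite intersection of closed convex sets is closed and convex, it suffices to prove each $\calS_i$ is closed and convex.

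**First I would** handle convexity. For each $i$, the operator $T_i = A_i + B_i$ is maximal monotone (as the sum of a maximal monotone $A_i$ and a cocoercive, hence maximal monotone and full-domain, $B_i$; cocoercivity gives monotonicity and single-valuedness, and by \cite[Cor.~20.28]{bauschke2011convex} the sum is maximal). The graph of a monotone operator need not be convex in general, but the key observation is that $\calS_i$ is the preimage of $\gra T_i$ under the \emph{linear} map $(z,\bw)\mapsto (G_i z, w_i)$ \emph{intersected with} the requirement that defines a solution; the convexity of $\calS_i$ does not follow from convexity of $\gra T_i$ alone. Instead I would argue convexity directly: take two points in $\calS$, form a convex combination, and verify the defining inclusions are preserved. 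The clean way is to observe that $\calS$ is exactly the Kuhn--Tucker set, and convexity of such sets for monotone inclusions is standard --- one uses that if $(z^1,\bw^1),(z^2,\bw^2)\in\calS$ then for any third point $p=(z,\bw)\in\calS$ the inner-product expression in \eqref{provesep} is nonnegative, and a monotonicity/maximality argument shows the convex combination still lies in $\calS$.

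**The cleanest route,** which I would actually pursue, avoids decomposing into possibly-nonconvex pieces and instead uses the separator characterization already built in Lemma~\ref{lem:sepvalid}. Define, for every pair $(x,y)$ with $y\in T_i x$ (all $i$), the halfspace $H_{(x,y)} = \set{p\in\bcalH}{\varphi_{(x,y)}(p)\leq 0}$ using the affine function from \eqref{hplane}. By Lemma~\ref{lem:sepvalid}, $\calS \subseteq \bigcap H_{(x,y)}$ over all such choices. The reverse inclusion follows because maximality of each $T_i$ ensures that any point violating some inclusion $w_i\in T_i G_i z$ can be separated: by maximality there exists a pair $(x_i,y_i)\in\gra T_i$ with $\langle G_i z - x_i,\, w_i - y_i\rangle < 0$, which makes the corresponding $\varphi$ strictly positive at $p$. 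Thus $\calS = \bigcap_{(x,y)} H_{(x,y)}$ is an intersection of closed halfspaces, hence closed and convex.

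**The main obstacle** will be the reverse inclusion in this last approach, i.e. showing that a point \emph{outside} $\calS$ is genuinely separated by some $\varphi_{(x,y)}$; this is precisely where maximal monotonicity of the $T_i$ is essential and where care is needed, since one must produce the separating graph pair from the maximality of $T_i$ (not merely monotonicity) and handle the coupling through the $G_i$ and the linear relation $w_n = -\sum G_i^* w_i$. Since the excerpt notes this is ``very similar to'' the prior work \cite{johnstone2018projective}, I expect the argument to reduce each coordinate inclusion to a maximality separation and then assemble the halfspaces, with the $G_i$-coupling contributing only bounded linear transformations that preserve closedness.
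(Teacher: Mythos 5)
Your final approach (characterizing $\calS$ as an intersection of the halfspaces generated by graph pairs) is in substance the same argument the paper relies on: the paper's own proof consists only of establishing that each $T_i=A_i+B_i$ is maximal monotone and then deferring to Lemma~3 of \cite{johnstone2018projective}, which is precisely the halfspace characterization you sketch. However, as written your reverse inclusion has a genuine gap. The separator $\varphi$ is a \emph{sum} of $n$ inner-product terms, one per operator. Producing, via maximality of $T_i$, a single pair $(x_i,y_i)\in\gra T_i$ whose term $\langle G_i z - x_i, y_i - w_i\rangle$ is strictly positive does \emph{not} make $\varphi(p)>0$: the remaining $n-1$ terms depend on the other pairs $(x_j,y_j)$, and for a bad choice of those pairs they can be arbitrarily negative, swamping the positive term. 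To complete the step you must specify the other pairs. For each $j\neq i$ such that $w_j\in T_j G_j z$, take $(x_j,y_j)=(G_j z, w_j)\in\gra T_j$, so that its term is exactly zero; for each $j$ whose inclusion fails, use maximality exactly as you did for $i$ to make its term strictly positive. With this choice every term is nonnegative and at least one is strictly positive, so $\varphi(p)>0$ and $p$ escapes one of the halfspaces. (The convention $w_n=-\sum_{j=1}^{n-1}G_j^* w_j$ with $G_n=I$ lets you treat all $n$ coordinates uniformly here.) You flag this as ``the main obstacle'' but never resolve it, and the sentence claiming one violated inclusion ``makes the corresponding $\varphi$ strictly positive'' is false without the above choice.

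A second, smaller issue: you attribute maximality of the sum $A_i+B_i$ to \cite[Cor.~20.28]{bauschke2011convex}. That corollary gives maximality of the cocoercive operator $B_i$ alone; maximality of a \emph{sum} of maximal monotone operators is not automatic, and requires the full-domain hypothesis $\dom B_i=\calH_i$ together with \cite[Cor.~25.5(i)]{bauschke2011convex} (Rockafellar's sum theorem). This distinction is not pedantic, because establishing maximality of $T_i$ is essentially the entire content of the paper's written proof --- the halfspace argument itself is inherited from prior work. Finally, note that your first decomposition $\calS=\bigcap_i \calS_i$ does correctly yield \emph{closedness} (graphs of maximal monotone operators are strongly closed and the coordinate maps $(z,\bw)\mapsto(G_iz,w_i)$ are bounded linear), but, as you yourself observed, it cannot yield convexity since each $\calS_i$ is generally nonconvex; only the halfspace route delivers both properties at once.
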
 
\begin{proof}
	By \cite[Cor.~20.28]{bauschke2011convex} each $B_i$ is maximal.
Furthermore, since $\dom(B_i)=\calH_i$, $T_i=A_i+B_i$ is maximal monotone by
\cite[Cor.~25.5(i)]{bauschke2011convex}. The rest of the proof is identical to
\cite[Lemma 3]{johnstone2018projective}.
\ourqed
\end{proof}

Throughout, we will use $p =(z,\bw) =  (z,w_1,\ldots, w_{n-1})$ for a
generic point in $\bcH$, the collective primal-dual space. For $\bcH$, we
adopt the following (standard) norm and inner product:
\begin{align} \label{gammanorm}
\norm{(z,\bw)}^2 &\triangleq \|z\|^2 + \sum_{i=1}^{n-1}\|w_i\|^2 &
\Inner{(z^1,\bw^1)}{(z^2,\bw^2)} &\triangleq 
\langle z^1,z^2\rangle + \sum_{i=1}^{n-1}\langle
w^1_i,w^2_i\rangle.
\end{align}

\begin{lemma}
\label{LemGradAffine} 
\emph{\cite[Lemma 4]{johnstone2018projective}} Let $\varphi_k$ be defined as in (\ref{hplane}).  Then:
\begin{enumerate}
	\item $\varphi_k$ is affine on $\bcH$. 
	\item \label{item:gradForm} With respect to inner product
	$\langle\cdot,\cdot\rangle$ on $\bcH$, the gradient
	of $\varphi_k$ is
	$$
	\nabla\varphi_k =
	\left(\sum_{i=1}^{n-1} G_i^* y_i^k+ y_n^k ,
	x_1^k - G_1 x_{n}^k,x_2^k - G_2 x_{n}^k,\ldots,x_{n-1}^k - G_{n-1} x_{n}^k\right).
	$$
\end{enumerate}
\end{lemma}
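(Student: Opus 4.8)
The plan is to read both claims directly off the explicit formula for $\varphi_k$, treating it as a function of $p = (z,\bw)$ and exploiting the alternative expression~\eqref{hplane_alt}, which already separates $\varphi_k$ into its linear and constant parts.

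For affineness, I would point to~\eqref{hplane_alt}: the term $\inner{z}{\sum_{i=1}^n G_i^* y_i^k}$ is linear in $z$, each summand $\inner{x_i^k - G_i x_n^k}{w_i}$ is linear in $w_i$, and $-\sum_{i=1}^n \inner{x_i^k}{y_i^k}$ is a constant not depending on $p$ (since the pairs $x_i^k, y_i^k$ are fixed data at iteration $k$). Bilinearity of the inner products together with linearity and boundedness of each $G_i$ makes every nonconstant summand a bounded linear functional of $p$, so $\varphi_k$ is affine. Alternatively, one could start from~\eqref{hplane} and apply the adjoint identity $\inner{G_i z}{u} = \inner{z}{G_i^* u}$ to reach the same conclusion.

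For the gradient, the key observation is that an affine functional written in the form $\varphi_k(p) = \Inner{p}{g} + c$ with respect to the inner product~\eqref{gammanorm} has gradient exactly $g$. So I would collect the linear part of~\eqref{hplane_alt} into the shape $\Inner{(z,\bw)}{(g_0,g_1,\ldots,g_{n-1})} = \inner{z}{g_0} + \sum_{i=1}^{n-1}\inner{w_i}{g_i}$ and read off the coordinates: the $z$-component is $g_0 = \sum_{i=1}^n G_i^* y_i^k$, and the $w_i$-component is $g_i = x_i^k - G_i x_n^k$ for $i = 1,\ldots,n-1$. The final step is to invoke the standing simplifying assumption $G_n = I$, whence $G_n^* = I$, to split off the index-$n$ term as $\sum_{i=1}^n G_i^* y_i^k = \sum_{i=1}^{n-1} G_i^* y_i^k + y_n^k$; this produces precisely the asserted formula for $\nabla\varphi_k$.

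I expect no genuine obstacle, as the lemma is essentially a bookkeeping exercise and its statement is imported verbatim from~\cite{johnstone2018projective}. The only points needing care are consistently transferring each $G_i$ off of $z$ and onto $y_i^k$ via the adjoint identity, and correctly tracking the special role of the block $i = n$, which is absorbed into the $z$-component of the gradient through $G_n = I$ rather than contributing a separate coordinate of $\bw$. I would also verify that the constant term $-\sum_{i=1}^n \inner{x_i^k}{y_i^k}$ carries no dependence on $p$, so that it does not perturb the gradient.
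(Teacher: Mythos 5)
Your proposal is correct: reading affineness and the gradient directly off the expanded form~\eqref{hplane_alt}, identifying the linear part as $\Inner{p}{g}$ under the inner product~\eqref{gammanorm}, and using $G_n = I$ to rewrite $\sum_{i=1}^n G_i^* y_i^k$ as $\sum_{i=1}^{n-1} G_i^* y_i^k + y_n^k$ is exactly the right bookkeeping. Note that this paper gives no proof of its own --- it imports the lemma verbatim from \cite[Lemma 4]{johnstone2018projective} --- and your argument is essentially the same routine verification carried out in that reference, so there is no substantive difference in approach.
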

% \begin{proof}
% Lemma 4 of \cite{. 
% \end{proof}

\subsection{Abstract One-Forward-Step Update}

%In Lemmas \ref{lemBounded}--\ref{lemSecondX} below, assumptions \ref{assMono},
%\ref{assBasic}, and \ref{assStep} are in effect. 
%In lemmas \ref{lemBounded}--\ref{lemSecondX} below,
%we assume that Algorithm \ref{AlgfullyAsync} never terminates via line
%\ref{lineReturn}, so that it produces an infinite sequence of iterates. To
%save space, we will not state this in each lemma.

We sharpen the notation for the one-forward-step update introduced in
\eqref{eqNewUpdate1}--\eqref{eqNewUpdate2} as follows:
\begin{definition}\label{defUp}
	Suppose $\calH$ and $\calH'$ are real Hilbert spaces, $A:\calH\to2^\calH$ is maximal-monotone with nonempty domain, $B:\calH\to\calH$ is $L^{-1}$-cocoercive, and $G:\calH'\to\calH$ is bounded and linear. For $\alpha\in[0,1]$ and $\rho>0$, define the  mapping $\calF_{\alpha,\rho}(z,x,w;A,B,G):\calH'\times\calH^2\to \calH^2$, with additional parameters $A,B$, and $G$, as
	\begin{align}\label{eqF}	
	\calF_{\alpha,\rho}\!\left(
	\begin{array}{c} 
	z,x,w; \\ A,B,G
	\end{array}
	\right) :
	&= (x^+,y^+) :
	\left\{
	\begin{array}{ll}
	t &\triangleq(1-\alpha)x+\alpha G z -\rho(B x - w)
	\\
	x^+ &= J_{\rho A}\left(t\right)
	\\
	y^+ &= \rho^{-1}(t - x^+) +B x^+.
	\end{array}
	\right. 
	\end{align}
\end{definition}
\myc{To simplify the presentation, we will also use the notation
\begin{align}\label{eqFsimp}
\calF^i(z,x,w)\triangleq 
\calF_{\alpha_i,\rho_i}\left(
z,x,w; A_i,B_i,G_i
\right).
\end{align}
%Note that using \eqref{defprox2} we may write
%\begin{align}\label{eqdefProx3}
%x^++\rho a^+ = 
%(1-\alpha)x+\alpha G z -\rho(B x - w).
%\end{align}
\noindent With this notation, \eqref{eqNewUpdate1}--\eqref{eqNewUpdate2} may be written as $
(x_i^k,y_i^k) = \calF^i(z^k,x_i^{k-1},w_i^k).
$
}
%%% JE this is covered in more detail later; commenting out
% It may naively appear that evaluating $\calF$ might require two forward
% evaluations of $B$. However, if~\eqref{eqFasUp} is used iteratively, it can be
% seen that $B_i x_i^{k-1}$ has already  been computed in the last iteration and
% may be reused. In practice, each evaluation of $\calF$ thus requires only one
% forward evaluation of $B_i$.

\subsection{Algorithm Definition}
\label{sec_alg}

\begin{algorithm}[ht!]	\label{AlgfullWithBT}
	\DontPrintSemicolon
	\SetKwInOut{Input}{Input}
	\SetKwFunction{backTrack}{backTrack}
	\SetKwFunction{projectToHplane}{projectToHplane}
	\caption{One-Forward-Step Projective Splitting with Backtracking}
	\Input{$(z^1,{\bf w}^1)\in \boldsymbol{\mathcal{H}}$, $\mathcal{B}\subseteq\{1,\ldots,n\}$ (the operators requiring backtracking), $\gamma>0$, $\delta\in(0,1)$, and $\hat{\rho}$. For $i=1,\ldots,n$:  $x_i^0\in\calH_i$ 
	\myc{and $0< \alpha_i\leq 1$}.
For $i\in\mathcal{B}$: \myc{$\rho_i^0>0$}
		$\hat{\theta}_i\in\dom(A_i)$,  $\hat{w}_i\in A_i\hat{\theta}_i+B_i\hat{\theta}_i$, 
		and $y_i^0\in A_i x_i^0+B_i x_i^0$.
		For $i\notin\mathcal{B}$: $\rho_i>0$.
		}	
		For $i\in\mathcal{B}$ set $\eta_i^0=0$\;
	\For{$k=1,2,\ldots$}
	{   
		\For{$i\in\mathcal{B}$}
		{								
			$\label{lineBeginBack}(x_i^k,y_i^k, \rho_i^{k}, \eta_i^{k})=$~\backTrack{$z^k$, $x_i^{k-1}$, $w_i^k$, $y_i^{k-1}$, $\rho_i^{k-1}$, $\eta_i^{k-1}$; $i$}\label{lineDoBT}\; 
			\tcc{\backTrack defined in Algorithm \ref{AlgBackTrack}}
		}
	
		\For{$i\notin\mathcal{B}$}
		{
		\myc{$(x_i^k,y_i^k) = \calF^i(z^k, x_i^{k-1}, w_i^k)\qquad$ 
		\tcc{$\calF^i$ defined in \eqref{eqF}-\eqref{eqFsimp}}}\label{lineDoNoBT}
		}
				
		$(\pi_k, z^{k+1}, {\bf w}^{k+1})=$ \projectToHplane{$z^k$, ${\bf w}^k$, $\{x_i^k,y_i^k\}_{i=1}^n$} \label{lineproject}\;
		\tcc{\projectToHplane defined in Algorithm \ref{AlgProjUpdate}}		
		\If{$\pi_k=0$}
		{
			\Return $z^{k+1}$\label{lineReturn}\;
			
		} 
	}	
\end{algorithm}

\begin{algorithm}[ht!]\label{AlgBackTrack}
	\DontPrintSemicolon
	\SetKwInOut{Input}{Input}
	\SetKwInOut{GlobalVars}{Global Variables for Function}
	\SetKwFunction{backTrack}{backTrack}
	\caption{Backtracking procedure}
	%\SetNoFillComment
	\label{AlgLineSearch}
	%\Input{$z$, $w_i$, $x_i^{\prev}$, $b_i^{\prev}$, $\hat{\rho}_i$, $\hat{\theta}_i$, $\hat{w}_i\in T_i\hat{\theta}_i$, $\varphi_{i,\prev}(p)$}
		\GlobalVars{$G_i, A_i, B_i$,
		$\alpha_i$, $\hat{\theta}_i$, and $\hat{w}_i$ 
		for $i\in \mathcal{B}$, 
		$\delta$ and $\hat{\rho}$.
	    }
	\SetKwProg{Fn}{Function}{:}{}
	\Fn{\backTrack{$z$, $x$, $w$, $y$, $\rho$, $\eta$; $i$}}{
		$A=A_i$, $B=B_i$, $G=G_i$, $\alpha = \alpha_i$, $\hat{\theta} = \hat{\theta}_i$, $\hat{w} = \hat{w}_i$\;
		$\varphi =  \langle G z-x ,y - w\rangle$\label{lineGetPhi}\;
		$\orho = \min\{(1+\alpha\eta)\rho,\hat{\rho}\}$\;
		Choose $\tilde{\rho}_{1} \in
		 [\rho,\orho]$\label{lineChooseRho}\;
		\For{$j=1,2,\ldots$}{\label{lineBeginFor}
			$(\tilde{x}_{j},\tilde{y}_{j}) = \calF_{\alpha,\tilde{\rho}_{j}}(z,x,w;A,B,G)\qquad$ \tcc{$\calF$ defined in \eqref{eqF}}\label{lineDoF}
			$\hat{y}_{j} = \tilde{\rho}_j^{-1}\left((1-\alpha)x+\alpha Gz-\tilde{x}_j\right)+w$\;
			$\varphi^+_j = \langle  G z- \tilde{x}_{j},\tilde{y}_{j} - w\rangle 
			$\;
			\If{\label{lineC1}$\|\tilde{x}_{j} - \hat{\theta}\|\leq 	(1-\alpha)\|x-\hat{\theta}\|
				+
				\alpha\| G z- \hat{\theta}\|+\tilde{\rho}_{j}\|w - \hat{w}\|$
			 \\ ~~~~\emph{and} $\varphi_j^+
			 \geq			 
			 \frac{\tilde{\rho}_{j}}{2\alpha}
			 \left(
			 \|\tilde{y}_{j} - w\|^2
			 +
			 \alpha 
			 \|\hat{y}_{j} - w\|^2
			 \right)+ (1-\alpha)
			 \left(
			 \varphi
			 -
			 \frac{\tilde{\rho}_{j}}{2\alpha}
			 \|y - w\|^2
			 \right)$\label{lineC2}
			}{
			    $\eta = \|\hat{y}_j - w\|^2/\|\tilde{y}_j-w\|^2$\;
				\Return{ $(\tilde{x}_{j},\tilde{y}_j,\tilde{\rho}_j,\eta)$}				
				\label{lineBTreturn}
			}
			$\tilde{\rho}_{j+1} = \delta\tilde{\rho}_{j}$\label{lineStepDec}\;
		}
	}
\end{algorithm}

\begin{algorithm}[ht!]
	\DontPrintSemicolon
	\SetKwInOut{GlobalVars}{Global Variables for Function}
	\SetKwInOut{Input}{Input}
	\SetKwFunction{projectToHplane}{projectToHplane}
	%\Input{$(z,{\bf w})\in \boldsymbol{\mathcal{H}}$, $(x_i,y_i)\in\text{graph}(T_i)$}
	\GlobalVars{$G_i$ for $i=1,\ldots,n-1$, and $\gamma$.}
	\SetKwProg{Fn}{Function}{:}{}
	\Fn{\projectToHplane{$z,\bw$, $\{x_i,y_i\}_{i=1}^n$}}
	{				
		$u_i = x_i - G_i x_n,\quad i=1,\ldots,n-1,$\;
		$v = \sum_{i=1}^{n-1} G_i^* y_i+y_n$\;    
		$\pi = \|u\|^2+\gamma^{-1}\|v\|^2$\label{linePiUpdate}\;    
		\eIf{$\pi>0$}{
			$\varphi(p) = 
			\langle z, v\rangle 
			+
			\sum_{i=1}^{n-1}
			\langle w_i,u_{i}\rangle 
			-
			\sum_{i=1}^{n}
			\langle x_i,y_i\rangle  
			$\;			
			$\tau = \frac{1}{\pi}\cdot\max\left\{0,\varphi(p)\right\}
			$\;
		}
		{	
			\Return $(0, x_n, y_1,\ldots,y_{n-1})$\;
			
		} 
		$z^{+} = z - \gamma^{-1}\tau v$\label{lineZproj}\;
		$w_i^{+} = w_i - \tau u_{i},\quad i=1,\ldots,n-1$,\label{lineWproj}\;
		$w_{n}^{+} = -\sum_{i=1}^{n-1} G_i^* w_{i}^{+}$\;
		\Return $(\pi, z^+, \bw^{+})$
    }
	\caption{Projection Update}
	\label{AlgProjUpdate}
\end{algorithm}

Algorithms \ref{AlgfullWithBT}--\ref{AlgProjUpdate} define the main method
proposed in this work. They produce a sequence of primal-dual iterates $p^k =
(z^k,w_1^k,\ldots,w_{n-1}^k)\in\bcalH$ and, implicitly, $w_n^k \triangleq
-\sum_{i=1}^{n-1} G_i^* w_i^k$. Algorithm~\ref{AlgfullWithBT} gives the basic
outline of our method; for each operator, it invokes either our new
one-forward-step update with a user-defined stepsize (through line~\ref{lineDoNoBT})
or its backtracking variant given in Algorithm~\ref{AlgBackTrack} (through
line~\ref{lineDoBT}).
Together, algorithms \ref{AlgfullWithBT}--\ref{AlgBackTrack} specify
how to update the points $(x_i^k,y_i^k)$ used to define the separating affine
function $\varphi_k$ in
\eqref{hplane}.
Algorithm~\ref{AlgProjUpdate}, called from line~\ref{lineproject} of
Algorithm~\ref{AlgfullWithBT}, defines the \texttt{projectToHplane} function
that performs the projection step to obtain the next iterate.

Taken together, algorithms \ref{AlgfullWithBT}--\ref{AlgProjUpdate} are essentially
the same as Algorithm 2 of \cite{johnstone2018projective}, except that the
update of $(x_i^k,y_i^k)$ uses the new procedure given in
\eqref{eqNewUpdate1}--\eqref{eqNewUpdate2}. For simplicity, the algorithm also
lacks the block-iterative and asynchronous features
of~\cite{combettes2016async,eckstein2017simplified,johnstone2018projective},
which we plan to combine with
algorithms \ref{AlgfullWithBT}--\ref{AlgProjUpdate} in a follow-up paper. 

% Seems redundant
% The subfunction \texttt{projectToHplane} is defined in Algorithm
% \ref{AlgProjUpdate}. This function takes care of projecting the current
% primal-dual iterate $p^k$ onto the separating hyperplane defined by
% $\set{p}{\varphi_k(p)=0}$ and is the same set of calculations used in our
% previous work \cite{johnstone2018projective}. 

The
computations in \texttt{projectToHplane} are all straightforward and of
relatively low complexity. They consist of matrix multiplies by $G_i$,
inner products, norms, and sums of scalars. In particular, there are no
potentially difficult minimization problems involved. If
$G_i=I$ and $\calH_i = \mathbb{R}^d$ for $i=1,\ldots,n$, then the
computational complexity of \texttt{projectToHplane} is $\bigO(nd)$.

\subsection{Algorithm Parameters}
The method allows two ways to select the stepsizes $\rho_i$.
One may either choose them manually or invoke the \texttt{backTrack}
procedure. 
If one decides to select the stepsizes
manually, the upper bound condition $\rho_i\leq2(1-\alpha_i)/L_i$ is
required whenever $L_i>0$. However, it may be difficult to ensure that this
condition is satisfied when the cocoercivity constant is hard to estimate. The
global cocoercivity constant $L_i$ may also be conservative in parts of the
domain of $B_i$, leading to unnecessarily small stepsizes in some cases. We
developed the backtracking linesearch technique for these reasons.  The set
$\mathcal{B}$ holds the indices of operators for which backtracking is to be
used.

For a trial stepsize $\tilde{\rho}_{j}$, Algorithm \ref{AlgBackTrack}
generates candidate points $(\tilde{x}_{j},\tilde{y}_{j})$ using the
single-forward-step procedure of \eqref{eqF}. For these candidates, Algorithm
\ref{AlgBackTrack} checks two conditions on lines \ref{lineC1}--\ref{lineC2}.
If both of these inequalities are satisfied, then backtracking terminates and
returns the successful candidate points. If either condition is not satisfied,
the stepsize is reduced by the factor $\delta\in(0,1)$ and the process is
repeated. These two conditions arise in the analysis in Section
\ref{sec_main_proof}.

\myc{ 
The parameter $\hat{\rho}$ is a global upper bound on the stepsizes (both backtracked and fixed) and must be chosen to satisfy Assumption \ref{assStep}.
In \texttt{backTrack}, one must choose an initial trial stepsize within a specified interval
(line \ref{lineChooseRho} of Algorithm \ref{AlgBackTrack}). This interval arises in the analysis (see lemmas \ref{lemAv2step} and \ref{lemAscent1}). 
Written in terms of the parameters passed into \texttt{backTrack} in the call on line \ref{lineDoBT} of Algorithm \ref{AlgfullWithBT}, and assuming the global upper bound $\hat{\rho}$ is sufficiently large to not be active on line \ref{lineChooseRho}, the interval is 
$$
\left[\rho_i^k,\left(1+\alpha_i\frac{\|\hat{y}_i^k - w_i^k\|}{\|y_i^k - w_i^k\|}\right)\rho_i^k\right].
$$
An obvious choice is to set the initial stepsize to be at the upper limit of the interval. 
In practice we have observed that
$
\|y_i^k - w_i^k\|
$ 
and 
$
\|\hat{y}_i^k - w_i^k\|
$
tend to be approximately equal, 
so this allows for an increase in the trial stepsize by up to a factor of 
approximately $1+\alpha_i$ over the previous stepsize.

Note that \texttt{backTrack} returns the chosen stepsize $\tilde{\rho}_j$ as well as the quantity $\eta$ which are needed to compute the available interval in the call to \texttt{backTrack} during the next iteration. 

In the analysis it will be convenient to let $\tilde{\rho}^{(i,k)}$ be the initial trial stepsize chosen  during iteration $k$ of Algorithm \ref{AlgfullWithBT}, when
\texttt{backTrack} has been called through line \ref{lineDoBT} for some
$i\in\mathcal{B}$. 

We call the stepsize returned by \texttt{backTrack} $\rho_i^k$. 
Assuming that \texttt{backTrack} always terminates finitely (which we will
show to be the case), we may write for $i\in\mathcal{B}$
$$
(x_i^k,y_i^k) = \calF_{\alpha_i,\,\rho_i^k}(z^k, x_i^{k-1}, w_i^k; A_i, B_i, G_i)
$$
The only difference between the update for $i\in\mathcal{B}$ on line \ref{lineDoNoBT} and this update for $i\notin\mathcal{B}$ is that in the former, the stepsize $\rho_i^k$ is
discovered by backtracking, while in the latter it is directly user-supplied.
}

The \texttt{backTrack} procedure computes several auxiliary quantities used to
check the two backtracking termination conditions. The point $\hat{y}_{j}$ is
calculated to be the same as $\hat{y}$ given in Definition \ref{defYhat}. The
quantity $\varphi_j^+ = \langle G z - \tilde{x}_{j},\tilde{y}_{j} - w\rangle$
is the value of $\varphi_{i,k}(z^k,w_i^k)$ corresponding to the candidate
points $(\tilde{x}_{j},\tilde{y}_{j})$. The quantity $\varphi$ computed on line \ref{lineGetPhi} is equal to $\varphi_{i,k-1}(z^k,w_i^k) = \langle G_i z^k - x_i^{k-1},y_i^{k-1} - w_i^k\rangle$. Typically, we want $\varphi_j^+$ to be as large as
possible to get a bigger cut with the separating hyperplane, but the condition checked on line \ref{lineC2} will ultimately
suffice to prove convergence.

Algorithm \ref{AlgfullWithBT} has several additional parameters.  
\begin{description}
\item[ $(\hat{\theta}_i,\hat{w}_i)$] these are used in the backtracking procedure for $i\in\mathcal{B}$. An obvious choice which we used in our numerical experiments was 
$(\hat{\theta}_i,\hat{w}_i) = (x_i^0,y_i^0)$, i.e.~the initial point.
\item[$\gamma>0$:] allows for the projection
to be performed using a slightly more general primal-dual metric than
\eqref{gammanorm}. In effect, this parameter changes the relative size of the
primal and dual updates in lines \ref{lineZproj}--\ref{lineWproj} of Algorithm \ref{AlgProjUpdate}. As $\gamma$
increases, a smaller step is taken in the primal and a larger step in the
dual. As $\gamma$ decreases, a smaller step is taken in the dual update and a
larger step is taken in the primal. See \cite[Sec.~5.1]{eckstein2009general}
and \cite[Sec.~4.1]{eckstein2008family} for more details.
\end{description}

\myc{
In Algorithm \ref{AlgfullWithBT}, the averaging parameters $\alpha_i$ and
user-selected stepsizes $\rho_i$ are fixed across all iterations. In the
preprint version of this paper \cite{johnstone2019single}, we instead allow
these parameters to vary by iteration, subject to certain restrictions.  Doing
so complicates the notation and the analysis, so for relative simplicity we
consider only fixed values of these parameter here.  This simplification also
accords with the parameter choices in our computational tests below.  For the
full, more complicated analysis, please refer to~\cite{johnstone2019single}.
}

As written, Algorithm \ref{AlgfullWithBT} is not as efficient as it could be.
On the surface, it seems that we need to recompute $B_i x_i^{k-1}$ in order to
evaluate $\calF$ on line \ref{lineDoNoBT}. However, $B_i x_i^{k-1}$ was
already computed in the previous iteration and can obviously be reused, so
only one evaluation of $B_i$ is needed per iteration. Similarly, within
\texttt{backTrack}, each invocation of $\calF$ on line \ref{lineDoF} may reuse the
quantity $B x = B_i x_i^{k-1}$ which was computed in the previous iteration of
Algorithm \ref{AlgfullWithBT}. Thus, each iteration of the loop within
\texttt{backTrack} requires one new evaluation of $B$, to compute
$B\tilde{x}_j$ within $\calF$.

%\input{prepResults}
%\input{backTrack}

%\subsection{Parameter Assumptions}
%We now list most of our parameter assumptions; however, one additional assumption is introduced in Section \ref{secLyap}, where it arises in the analysis XX. 
\label{secParam}

We now precisely state our stepsize assumption for the manually chosen stepsizes, as well as the stepsize upper bound $\hat{\rho}$.

\begin{assumption}\label{assStep}
	 	For $i\notin \mathcal{B}$: If $L_i>0$, then
		$
		0<\rho_i\leq 2(1-\alpha_i)/L_i,
		$
		otherwise $\rho_i>0$. The parameter $\hat{\rho}$ must satisfy
		\begin{align}\label{hatRhoBound}
		\hat{\rho}\geq \max\left\{\max_{i\in\mathcal{B}} \rho_i^0,\max_{i\notin \mathcal{B}}\rho_i \right\}.
		\end{align}
\end{assumption}
\myc{ 
Note that if $L_i>0$, Assumption \ref{assStep} effectively limits $\alpha_i$
to be strictly less than $1$, otherwise the stepsize $\rho_i$ would be forced
to $0$, which is prohibited. In this case $\alpha_i$ must be chosen in
$(0,1)$. On the other hand, if $L_i=0$, there is no constraint on $\rho_i$
other than that it is positive and nonzero, and in this case $\alpha_i$ may be
chosen in $(0,1]$. }

\subsection{Separator-Projector Properties}
Lemma \ref{lemIsProject} details the key results for Algorithm
\ref{AlgfullWithBT} that stem from it being a
seperator-projector algorithm. While these properties alone do not guarantee
convergence, they are important to all of the arguments that follow.
\begin{lemma}\label{lemIsProject}
	Suppose that Assumption \ref{assMono} holds.
	Then for Algorithm \ref{AlgfullWithBT}
	\begin{enumerate}
		\item The sequence $\{p^k\}=\{(z^k,w_1^k,\ldots,w_{n-1}^k)\}$ is bounded.
		\item \label{item:successive} If the algorithm never terminates via
		line \ref{lineReturn}, $p^k - p^{k+1}\to 0$. Furthermore $z^k -
		z^{k-1}\to 0$ and $w_i^k - w_i^{k-1}\to 0$ for $i=1,\ldots n$.
		\item If the algorithm never terminates via line \ref{lineReturn} and $\|\nabla\varphi_k\|$ remains bounded for all $k\geq 1$, then $\limsup_{k\to\infty}\varphi_k(p^k)\leq 0$. 
	\end{enumerate}
\end{lemma}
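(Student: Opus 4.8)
Lemma 3 has three parts - boundedness, successive differences → 0, and limsup of φ_k(p^k) ≤ 0. These are classic separator-projector properties.

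**Part 1 (Boundedness via Fejér monotonicity):**

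The standard approach: projecting onto H_k containing S means each step moves closer to any solution point. So ||p^{k+1} - p*|| ≤ ||p^k - p*|| for p* ∈ S (Fejér monotone). Bounded sequence follows immediately.

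**Part 2 (Successive differences → 0):**

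From Fejér monotonicity, the sequence ||p^k - p*||² is monotone decreasing hence convergent. The projection onto halfspace gives an explicit relation: ||p^{k+1} - p*||² ≤ ||p^k - p*||² - (something)||p^{k+1} - p^k||². Summing telescopes, forcing Σ||p^{k+1} - p^k||² < ∞, hence p^k - p^{k+1} → 0.

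The z^k - z^{k-1} → 0 and w_i^k - w_i^{k-1} → 0 parts follow from the norm structure in (gammanorm) since the collective norm dominates individual component norms.

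**Part 3 (limsup φ_k(p^k) ≤ 0):**

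This is where the ∇φ_k boundedness matters. The projection formula gives:
- τ = max{0, φ_k(p^k)}/π_k where π_k = ||∇φ_k||² (in the γ-metric)
- The step ||p^{k+1} - p^k|| = τ·||∇φ_k||

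So ||p^{k+1} - p^k|| = max{0, φ_k(p^k)}/||∇φ_k||.

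Since p^k - p^{k+1} → 0 (Part 2) and ||∇φ_k|| bounded, we get max{0, φ_k(p^k)} → 0, hence limsup φ_k(p^k) ≤ 0.

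Let me verify the projection details with the γ-metric.
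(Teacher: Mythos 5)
Your proposal is correct, and it is in substance the argument the paper relies on: the paper's own proof of this lemma consists only of citations (parts 1--2 to lemmas 2 and 6 of \cite{johnstone2018projective}, part 3 to part 1 of the proof of Theorem 1 there), and the content of those cited results is precisely the Fej\'{e}r-monotone separator-projector argument you reconstruct --- projection onto a halfspace containing $\calS$, telescoping the Pythagorean inequality, then reading off $\max\{0,\varphi_k(p^k)\}$ from the step length. Three loose ends you should tie up to make your version self-contained. First, the inclusion $\calS \subseteq H_k$ is not automatic: it requires $y_i^k \in T_i x_i^k$ for all $i$, which holds because $\calF$ in \eqref{eqF} returns pairs in $\gra(A_i+B_i)$, and then Lemma \ref{lem:sepvalid} gives the separation. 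Second, for $i=n$ the claim $w_n^k - w_n^{k-1}\to 0$ does \emph{not} follow from the norm structure alone, since $w_n^k$ is not a coordinate of $p^k$; you need the identity $w_n^k = -\sum_{i=1}^{n-1}G_i^* w_i^k$ together with boundedness of the operators $G_i$. Third, the $\gamma$-metric check you deferred does work out: with the metric $\|(z,\bw)\|_\gamma^2 = \gamma\|z\|^2 + \sum_{i=1}^{n-1}\|w_i\|^2$, the quantity $\pi_k$ computed on line \ref{linePiUpdate} is exactly the squared $\gamma$-norm of the gradient of $\varphi_k$ in that metric, and lines \ref{lineZproj}--\ref{lineWproj} implement exact projection onto $H_k$ in that metric, so that $\max\{0,\varphi_k(p^k)\} = \|p^{k+1}-p^k\|_\gamma\,\|\nabla_\gamma \varphi_k\|_\gamma$; since this metric is equivalent to \eqref{gammanorm} for any fixed $\gamma>0$, boundedness of $\|\nabla\varphi_k\|$ in the standard norm suffices, and Part 3 follows as you state.
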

\begin{proof}
Parts 1--2 are proved in lemmas 2 and 6 of \cite{johnstone2018projective}.
Part 3 can be found in Part 1 of the proof of Theorem 1 in
\cite{johnstone2018projective}.  The analysis
in~\cite{johnstone2018projective} uses a different procedure to construct the
pairs $(x_i^k,y_i^k)$, but the result is generic and not dependent on that
particular procedure.  Note also that \cite{johnstone2018projective}
establishes the results in a more general setting allowing asynchrony and
block-iterativeness, which we do not analyze here. \ourqed
\end{proof}

\section{The Special Case $n=1$}
Before starting the analysis, we consider the important special case $n=1$. In
this case, we have by assumption that $G_1=I$, $w_1^k=0$, and we are solving
the problem
$
0\in A z + B z,
$
where both operators are maximal monotone and $B$ is $L^{-1}$-cocoercive. In
this case, Algorithm \ref{AlgfullWithBT}  reduces to a method which is similar
to FB. Let $x^k\triangleq x_1^k$, $y^k\triangleq y_1^k$, $\alpha\triangleq
\alpha_1$, and $\rho\triangleq \rho_1$. Assuming for simplicity that $\mathcal{B}=\{\emptyset\}$, meaning backtracking is not being used, then the updates carried out by
the algorithm are
\begin{align}
x^k &= J_{\rho A}
\left(
(1-\alpha)x^{k-1}+\alpha z^k -\rho B x^{k-1}
\right)
\label{eqReduce2FB}\\\nonumber 
y^k &= B x^k + \frac{1}{\rho}\left(
(1-\alpha)x^{k-1}+\alpha z^k -\rho B x^{k-1}-x^k
\right)
\\\nonumber 
z^{k+1}&=  z^k - \tau^k y^k,\quad 
\text{where~} \tau^k = \frac{\max\{\langle z^k - x^k,y^k\rangle,0\}}{\|y^k\|^2}.
\end{align}
If $\alpha=0$, then for all $k\geq 2$, the iterates computed in \eqref{eqReduce2FB} reduce simply to
\begin{align*}
x^k = J_{\rho A}\left( x^{k-1} - \rho B x^{k-1}\right)
\end{align*}
which is exactly FB. However, $\alpha=0$ is not allowed in our analysis.
Thus, FB is a forbidden boundary case which may be approached by setting
$\alpha$ arbitrarily close to $0$. As $\alpha$
approaches $0$, the stepsize constraint $\rho\leq 2(1-\alpha)/L$
approaches the classical stepsize constraint for FB: 
$\rho\leq2/L-\epsilon$ for some arbitrarily small constant $\epsilon>0$. A 
potential
benefit of Algorithm \ref{AlgfullWithBT} over FB in the $n=1$ case is that it
does allow for backtracking when $L$ is unknown or only a conservative
estimate is available.

%On the other hand if $\IL=\{1\}$ then, as discussed in Section XX of XX, Algorithm \ref{AlgWithLip} reduces to a version of the extragradient method from XX. 

\section{Main Proof}\label{sec_main_proof}

%\subsection{The Strategy}
The  core of the proof strategy will be to establish \eqref{eqAlltheMarbles}
below. If this can be done, then weak convergence to a solution follows from
part 3 of Theorem 1 in \cite{johnstone2018projective}.
\begin{lemma}\label{lemStrat}
Suppose Assumption \ref{assMono} holds and Algorithm
\ref{AlgfullWithBT} produces an infinite sequence of iterations without
terminating via Line \ref{lineReturn}. If
\begin{align}\label{eqAlltheMarbles}
(\forall i=1,\ldots,n):\quad y_i^k-w_i^k\to 0
\text{ and }\,\,G_i z^k - x_i^k\to 0,
\end{align}
then there exists $(\overline{z},\overline{\bw})\in\calS$ such that 
$(z^k,\bw^k)\rightharpoonup (\overline{z},\overline{\bw})$. 
Furthermore, 
% Just to get a nicer line break
\myc{we also have} $x_i^k\rightharpoonup G_i \bar z$
and $y_i^k\rightharpoonup \overline{w}_i$ for all $i=1,\ldots,n-1$,
$x_{n}^k\rightharpoonup \bar z$, and $y_n^k\rightharpoonup
-\sum_{i=1}^{n-1}G_i^* \overline{w}_i$.  
\end{lemma}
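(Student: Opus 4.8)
The plan is to combine the generic separator-projector machinery recorded in Lemma~\ref{lemIsProject} with a limiting argument showing that every weak sequential cluster point of $\{p^k\}$ lies in $\calS$. By Lemma~\ref{lemIsProject}(1) the sequence $\{p^k\}$ is bounded, and the algorithm is Fej\'er monotone with respect to $\calS$ (each iterate is a projection onto a halfspace known to contain $\calS$). Hence, by the standard Fej\'er-monotone convergence principle \cite[Prop.~2]{combettes2000fejerXX} (equivalently, part~3 of Theorem~1 in \cite{johnstone2018projective}, whose proof of this implication does not depend on the operator-processing step), it suffices to show that every weakly convergent subsequence of $\{p^k\}$ has its limit in $\calS$; whole-sequence weak convergence $(z^k,\bw^k)\rightharpoonup(\bar z,\bar\bw)$ with $\bar p=(\bar z,\bar\bw)\in\calS$ then follows.

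So let $p^{k_j}\rightharpoonup \bar p=(\bar z,\bar\bw)$ along some subsequence. First I would translate this into weak convergence of the processed pairs. Writing $\bar w_n\triangleq-\sum_{i=1}^{n-1}G_i^*\bar w_i$ and using $G_n=I$, the hypotheses $G_i z^k-x_i^k\to 0$ and $y_i^k-w_i^k\to 0$ from \eqref{eqAlltheMarbles}, together with weak continuity of the bounded linear operators $G_i$ and $G_i^*$, give $x_i^{k_j}\rightharpoonup G_i\bar z$ and $y_i^{k_j}\rightharpoonup\bar w_i$ for every $i=1,\ldots,n$. Each pair satisfies $(x_i^{k_j},y_i^{k_j})\in\gra T_i$, and each $T_i=A_i+B_i$ is maximal monotone by Lemma~\ref{lemClosed}, so the goal is to pass to the limit in these inclusions.

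The main obstacle is precisely this passage to the limit: since both $x_i^{k_j}$ and $y_i^{k_j}$ converge only weakly, one cannot directly conclude $\bar w_i\in T_i(G_i\bar z)$, because the product $\langle x_i^{k_j},y_i^{k_j}\rangle$ need not converge to $\langle G_i\bar z,\bar w_i\rangle$. The resolution exploits the skew primal-dual structure. Using $x_i^k=G_i z^k-(G_i z^k-x_i^k)$ and $y_i^k=w_i^k+(y_i^k-w_i^k)$, with both parenthesized residuals vanishing and all sequences bounded, one obtains $\langle x_i^{k},y_i^{k}\rangle=\langle G_i z^{k},w_i^{k}\rangle+o(1)$; since $w_n^k=-\sum_{i=1}^{n-1}G_i^*w_i^k$ forces $\sum_{i=1}^n\langle G_i z^k,w_i^k\rangle=0$, summing over $i$ yields $\sum_{i=1}^n\langle x_i^{k_j},y_i^{k_j}\rangle\to 0$. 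Now fix arbitrary $(a_i,b_i)\in\gra T_i$; monotonicity gives $\langle x_i^{k_j}-a_i,y_i^{k_j}-b_i\rangle\geq 0$, and summing over $i$ and letting $j\to\infty$ (the cross terms converge against the fixed $a_i,b_i$, and the sum of diagonal products tends to $0$) produces $\sum_{i=1}^n\langle G_i\bar z-a_i,\bar w_i-b_i\rangle\geq 0$, where I also use the analogous identity $\sum_{i=1}^n\langle G_i\bar z,\bar w_i\rangle=0$.

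Finally I would extract individual graph membership from this aggregate inequality by an independent-infimum argument. Setting $m_i\triangleq\inf_{(a_i,b_i)\in\gra T_i}\langle G_i\bar z-a_i,\bar w_i-b_i\rangle$, maximal monotonicity forces $m_i\leq 0$ for each $i$ (were the infimum positive, the monotonicity characterization would place $(G_i\bar z,\bar w_i)$ in $\gra T_i$, making the value there $0$), while the aggregate inequality, holding for all independent choices of the $(a_i,b_i)$, gives $\sum_{i=1}^n m_i\geq 0$. Hence every $m_i=0$, so $\langle G_i\bar z-a_i,\bar w_i-b_i\rangle\geq 0$ for all $(a_i,b_i)\in\gra T_i$, and maximal monotonicity yields $\bar w_i\in T_i(G_i\bar z)$ for $i=1,\ldots,n-1$ together with $\bar w_n=-\sum_{i=1}^{n-1}G_i^*\bar w_i\in T_n\bar z$; that is, $\bar p\in\calS$. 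Once whole-sequence weak convergence $(z^k,\bw^k)\rightharpoonup\bar p$ is in hand, the remaining assertions are immediate from \eqref{eqAlltheMarbles}: $x_i^k\rightharpoonup G_i\bar z$ and $y_i^k\rightharpoonup\bar w_i$ for $i<n$ (with $x_n^k\rightharpoonup\bar z$ since $G_n=I$), and $y_n^k\rightharpoonup-\sum_{i=1}^{n-1}G_i^*\bar w_i$ because $w_n^k\rightharpoonup-\sum_{i=1}^{n-1}G_i^*\bar w_i$ by weak continuity of $G_i^*$ and $y_n^k-w_n^k\to 0$.
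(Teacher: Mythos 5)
Your proposal is correct, and it is worth noting that the paper itself gives no argument at all for Lemma~\ref{lemStrat}: its ``proof'' is a one-line citation to part 3 of the proof of Theorem 1 in \cite{johnstone2018projective}, which the surrounding discussion identifies as an instance of \cite[Proposition~2.4]{alotaibi2014solving}. What you have written is a correct, self-contained reconstruction of exactly the argument that lives inside those references: Fej\'{e}r monotonicity of the projections (valid here since $\calS$ is nonempty, closed, and convex by Lemma~\ref{lemClosed} and each $H_k \supseteq \calS$), reduction via \cite[Prop.~2]{combettes2000fejerXX} to showing every weak cluster point lies in $\calS$, and then the two key devices that overcome the failure of weak-to-weak closedness of maximal monotone graphs: first, the skew identity $\sum_{i=1}^n \langle G_i z^k, w_i^k\rangle = 0$ combined with the vanishing residuals and boundedness to get $\sum_{i=1}^n \langle x_i^{k_j}, y_i^{k_j}\rangle \to 0$, and second, the independent-infimum argument ($m_i \leq 0$ by maximality, $\sum_i m_i \geq 0$ by the aggregate inequality, hence all $m_i = 0$) to recover the individual graph memberships from the summed monotonicity inequality. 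Both devices are sound as you state them, and the final passage from cluster-point membership to whole-sequence weak convergence and to the auxiliary limits $x_i^k \rightharpoonup G_i\bar z$, $y_i^k \rightharpoonup \bar w_i$ is handled correctly. What your version buys is transparency: a reader of the paper must chase two layers of citations to see why weak-weak limits are admissible here, whereas your proof makes the mechanism explicit; the cost is only length, and the minor caveat that the Fej\'{e}r property should formally be stated in the $\gamma$-scaled metric of Algorithm~\ref{AlgProjUpdate}, which changes nothing since that norm is equivalent to \eqref{gammanorm}.
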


\begin{proof}
Equivalent to part 3 of the proof of Theorem 1 in
\cite{johnstone2018projective}.\ourqed 
\end{proof}

\preprintversion{Lemma 
\ref{lemStrat} can be intuitively understood as follows. If we define,
for all $k \geq 1$,
$$
\epsilon_k = \max_{i=1,\ldots,n}
\left\{\max\big\{\|y_i^k - w_i^k\|, \|G_i z^k - x_i^k\|\big\} \right\},
$$ 
then \eqref{eqAlltheMarbles} is equivalent to saying that $\epsilon_k\to 0$.
For all $k\geq 1$, we have $(x_i^k,y_i^k)\in \gra T_i$. If $\epsilon_k=0$,
then $w_i^k = y_i^k\in T_i x_i^k = T_i G_i z^k$ and since $\sum_{i=1}^n G_i^*
w_i^k = 0$, it follows that $(z^k,\bw^k)\in\calS$ and $z^k$ solves
\eqref{prob2}. Thus $\epsilon_k$ can be thought of as the ``residual" of the
algorithm which measures how far it is from finding a point in $\calS$ and a
solution to \eqref{prob2}. In finite dimension, it is straightforward to show
that if $\epsilon_k\to 0$, $(z^k,\bw^k)$ must converge to some element of
$\calS$. This can be done using Fej\'{e}r monotonicity \cite[Theorem
5.5]{bauschke2011convex} combined with the fact that the graph of a
maximal-monotone operator in a finite-dimensional Hilbert space is closed
\cite[Proposition 20.38]{bauschke2011convex}. However in the general Hilbert
space setting the proof is more delicate, since the graph of a
maximal-monotone operator is not in-general closed in the weak-to-weak
topology \cite[Example 20.39]{bauschke2011convex}. Nevertheless the overall
result was established in the general Hilbert space setting in part 3 of
Theorem 1 of \cite{johnstone2018projective}, which is itself an instance of
\cite[Proposition 2.4]{alotaibi2014solving} (see also \cite[Proposition
26.5]{bauschke2011convex}). An arguably more transparent proof can be found in
\cite{weakDong2018} (this proof is only for the case $n=2$, but it can be
extended).}

In order to establish \eqref{eqAlltheMarbles}, we start by establishing
certain contractive and ``ascent'' properties for the mapping $\calF$, and
also show that the backtracking procedure terminates finitely. Then, we prove
the boundedness of $x_i^k$ and $y_i^k$, in turn yielding the boundedness of
the gradients $\nabla
\varphi_k$ and hence the result that
$\limsup_{k\to\infty}\{\varphi_k(p^k)\}\leq 0$ by Lemma \ref{lemIsProject}.
Next we establish a ``Lyapunov-like" recursion for $\varphi_{i,k}(z^k,w_i^k)$, relating
$\varphi_{i,k}(z^k,w_i^k)$ to $\varphi_{i,k-1}(z^{k-1},w_i^{k-1})$. Eventually
this result will allow us to establish that $\liminf_k\varphi_k(p^k)\geq 0$
and hence that $\varphi_k(p^k)\to 0$, which will in turn allow an argument
that $y_i^k-w_i^k\to 0$. The proof that $G_i z^k - x_i^k\to 0$ will then
follow fairly elementary arguments.

The primary innovations of the upcoming proof are the ascent lemma and the way that it is used in Lemma
 \ref{lemFirstBiggy} to establish $\varphi_k(p^k)\to0$ and $y_i^k-w_i^k\to 0$.
 This technique is a significant deviation from previous analyses in the
 projective splitting family. In previous work, the strategy was to show that
 $\varphi_{i,k}(z^k,w_i^k)\geq C\max\{\|G_i z^k - x_i^k\|^2,\|y_i^k -
 w_i^k\|^2\}$ for a constant $C>0$, which may be combined with
 $\limsup\varphi_k(p^k)\leq 0$ to imply \eqref{eqAlltheMarbles}. In contrast,
 in the algorithm of this paper we cannot establish such a result and in fact
 $\varphi_{i,k}(z^k,w_i^k)$ may be negative. 
 \preprintversion{Instead, we relate $\varphi_k(p^k)$ to
 $\varphi_{k-1}(p^{k-1})$ to show that the separation improves at each
 iteration in a way which still leads to overall convergence.}
 
 %in which case $p^{k+1}=p^k$. However this does not mean that the algorithm has stalled, as each $(x_i^k,y_i^k)$ pair is still updated so that eventually a separating hyperplane is found. 

\preprintversion{\subsection{Some Basic Results}}
We begin by stating three elementary results on sequences,
which may be found in \cite{PolyakIntro}, and a basic, well known nonexpansivity property for forward steps with cocoercive operators. 
\begin{lemma}\label{lemBounddSeq} \emph{\cite[Lemma 1, Ch.~2]{PolyakIntro}}
	Suppose that $a_k\geq 0$ for all $k\geq 1$, $b\geq 0$, $0\leq\tau<1$, and 
	$a_{k+1}\leq\tau a_k+b$ for all $k\geq 1$.
	Then $\{a_k\}$ is a bounded sequence. 
\end{lemma}
% \begin{proof}
% \cite[Lemma 1, Ch.~2]{PolyakIntro}.\ourqed 
% \end{proof}

\begin{lemma} \emph{\cite[Lemma 3, Ch.~2]{PolyakIntro}}
	Suppose that $a_k\geq 0, b_k\geq 0$ for all $k \geq 1$, $b_k\to 0$, and there is
	some $0\leq\tau<1$ such that $a_{k+1}\leq \tau a_k + b_k$ for all $k\geq 1$.
	Then $a_k\to 0$. 
	\label{lemSeqConverge}
\end{lemma}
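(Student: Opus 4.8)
The plan is to use the previously stated boundedness result together with a $\limsup$ argument, which is the cleanest route and directly leverages Lemma~\ref{lemBounddSeq}. First I would establish boundedness of $\{a_k\}$: since $b_k\to 0$, the sequence $\{b_k\}$ is bounded, say $b_k\leq b$ for all $k$, so the hypothesis $a_{k+1}\leq\tau a_k+b_k$ gives $a_{k+1}\leq\tau a_k+b$ with $0\leq\tau<1$. Lemma~\ref{lemBounddSeq} then yields that $\{a_k\}$ is bounded, which guarantees that $L\triangleq\limsup_{k\to\infty}a_k$ is a finite, nonnegative number.

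Next I would take $\limsup$ of both sides of the recursion $a_{k+1}\leq\tau a_k+b_k$. Using that $\limsup_k a_{k+1}=\limsup_k a_k=L$, the subadditivity of $\limsup$, and $\limsup_k b_k=\lim_k b_k=0$, I obtain $L\leq\tau L$. Since $0\leq\tau<1$, this forces $(1-\tau)L\leq 0$, hence $L\leq 0$. Combined with $L\geq 0$ (as every $a_k\geq 0$), we get $L=0$, and because $a_k\geq 0$ this is equivalent to $a_k\to 0$, as desired.

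Alternatively, one could argue by unrolling the recursion into $a_{k+1}\leq\tau^k a_1+\sum_{j=1}^{k}\tau^{k-j}b_j$. The first term vanishes because $\tau<1$, and the convolution term is handled by the standard device: fix $\epsilon>0$, choose $N$ with $b_j\leq\epsilon(1-\tau)$ for $j>N$, split the sum at $N$, bound the tail by $\epsilon$ using the geometric series $\sum_{m\geq 0}\tau^m=(1-\tau)^{-1}$, and note that the finitely many head terms each carry a factor $\tau^{k-j}\to 0$ as $k\to\infty$. The only mildly delicate point in either route is securing finiteness before taking $\limsup$ (handled by Lemma~\ref{lemBounddSeq}) or, in the elementary route, the splitting of the convolution sum; neither presents a genuine obstacle, as this is a standard fact from \cite{PolyakIntro}.
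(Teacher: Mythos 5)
Your proposal is correct; however, there is no paper proof to compare it against, because the paper states this result purely by citation to \cite[Lemma 3, Ch.~2]{PolyakIntro} and never proves it. Both of your routes are sound and self-contained modulo that citation structure. In the $\limsup$ route, the logic is airtight: boundedness of $\{b_k\}$ plus Lemma~\ref{lemBounddSeq} gives boundedness of $\{a_k\}$, so $L\triangleq\limsup_k a_k$ is finite; then $L=\limsup_k a_{k+1}\leq \tau\limsup_k a_k+\limsup_k b_k=\tau L$ (subadditivity is legitimate here since both limits superior are finite and $\tau\geq 0$), forcing $(1-\tau)L\leq 0$, hence $L=0$, and nonnegativity of $a_k$ upgrades $\limsup_k a_k=0$ to $a_k\to 0$. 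The only structural caveat is that this route leans on Lemma~\ref{lemBounddSeq}, which the paper likewise quotes from Polyak without proof, so it is a proof conditional on another quoted fact. Your alternative unrolling argument,
\begin{equation*}
a_{k+1}\leq \tau^k a_1+\sum_{j=1}^{k}\tau^{k-j}b_j,
\end{equation*}
with the head/tail split of the convolution sum, is the classical elementary proof (essentially the one in Polyak's book) and has the advantage of being fully self-contained: the same unrolled bound with $b_j\leq b$ also yields $a_{k+1}\leq a_1+b/(1-\tau)$, i.e., it proves Lemma~\ref{lemBounddSeq} as a byproduct, so this second route establishes both quoted sequence lemmas at once.
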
 
% \begin{proof}
% \cite[Lemma 3, Ch.~2]{PolyakIntro}.\ourqed 
% \end{proof}

\begin{lemma}
	\label{lemBoundBelow}
	\myc{Suppose that $0\leq\tau<1$ and $\{r_k\},\{b_k\}$ are sequences in
	$\mathbb{R}$ with the properties $b_k\to 0$ and $r_{k+1}\geq \tau r_k +
	b_k$ for all $k\geq 1$.} Then $\lim\inf_{k\to\infty} \{ r_k \} \geq 0$.
\end{lemma}
\begin{proof}
Negating the assumed inequality yields $-r_{k+1}\leq \tau(-r_k)- b_k$.
Applying~\cite[Lemma 3, Ch.~2]{PolyakIntro} then yields $\lim\sup\{ -r_k\}\leq
0$.\ourqed
\end{proof}

\begin{lemma}\label{lemNonExp}
Suppose $B$ is $L^{-1}$-cocoercive and $0\leq\rho\leq2/L$. Then for all $x,y\in\dom(B)$
\begin{align}\label{eqNonexp}
\|x-y-\rho(Bx-By)\|\leq \|x-y\|.
\end{align}
\end{lemma}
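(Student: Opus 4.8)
The plan is to prove the equivalent squared inequality $\|x - y - \rho(Bx - By)\|^2 \leq \|x - y\|^2$ and then take square roots. To streamline the algebra I would abbreviate $u = x - y$ and $v = Bx - By$. In the paper's convention, the $L^{-1}$-cocoercivity of $B$ reads $L\langle v, u\rangle \geq \|v\|^2$, which I would immediately rewrite as the single inequality $\langle u, v\rangle \geq L^{-1}\|v\|^2$; this is the only structural fact about $B$ that the proof needs.

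Next I would expand the left-hand side by bilinearity of the inner product, obtaining $\|u - \rho v\|^2 = \|u\|^2 - 2\rho\langle u, v\rangle + \rho^2\|v\|^2$. The only term with an unfavorable sign is $-2\rho\langle u, v\rangle$. Since $\rho \geq 0$, I may substitute the cocoercivity bound $\langle u, v\rangle \geq L^{-1}\|v\|^2$ into this term, which yields $\|u - \rho v\|^2 \leq \|u\|^2 + \rho\bigl(\rho - 2L^{-1}\bigr)\|v\|^2$. The final step is simply to note that the hypothesis $0 \leq \rho \leq 2/L$ forces $\rho(\rho - 2L^{-1}) \leq 0$, so the trailing term is nonpositive and $\|u - \rho v\|^2 \leq \|u\|^2$; taking square roots gives \eqref{eqNonexp}.

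I do not expect a genuine obstacle here: this is the standard fact that a forward (gradient) step with a cocoercive operator is nonexpansive for stepsizes up to $2/L$, and once cocoercivity is invoked it collapses to a one-line sign check on a completed square. The only points requiring minor care are keeping the direction of the cocoercivity inequality consistent with the paper's normalization (where the constant $L$, rather than $L^{-1}$, multiplies the inner product), and noting that the degenerate endpoint $\rho = 0$ is covered automatically since the extra term vanishes and the inequality holds with equality.
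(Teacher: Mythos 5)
Your proof is correct and follows essentially the same route as the paper's: square the left-hand side, expand by bilinearity, bound the cross term $-2\rho\langle x-y, Bx-By\rangle$ using cocoercivity (valid since $\rho\geq 0$), and observe that the remaining term $\rho(\rho-2/L)\|Bx-By\|^2$ is nonpositive under the stepsize hypothesis. The only cosmetic difference is your introduction of the abbreviations $u$ and $v$; the substance is identical.
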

\begin{proof}
Squaring the left hand side of \eqref{eqNonexp} yields
\begin{align*}
\|x-y-\rho(Bx-By)\|^2
&=
\|x-y\|^2 - 2\rho\langle x-y,Bx-By\rangle + \rho^2\|Bx-By\|^2
\\
&\leq 
\|x-y\|^2
-\frac{2\rho}{L}\|Bx-By\|^2
+ \rho^2\|Bx-By\|^2
\\
&\leq \|x-y\|^2.
\coapversion{\qquad\qquad\qquad\qquad\qquad\qquad\qquad\qquad\quad\ourqed}
\end{align*}
\end{proof}

\subsection{A Contractive Result}
We begin the main proof with a result on the one-forward-step mapping: $\calF$ from Definition \ref{defUp}. The following lemma will ultimately be used to show that the iterates remain bounded. 

\begin{lemma}
\label{lemPreBounded}
Suppose $(x^+,y^+) = \calF_{\alpha,\rho}(z,x,w;A,B,G)$, where
$\calF_{\alpha,\rho}$ is given in Definition \ref{defUp}. Recall that $B$ is
$L^{-1}$-cocoercive. If $L=0$ or $\rho\leq 2(1-\alpha)/L$, then
\begin{align}\label{eqPreBounded}
\|x^+-\hat{\theta}\|
\leq 
(1-\alpha)\|x-\hat{\theta}\|+\alpha\|G z-\hat{\theta}\| 
      + \rho\left\|w - \hat{w}\right\|
\end{align}
for any $\hat{\theta}\in\dom(A)$ and $\hat{w}\in A \hat{\theta}+B\hat{\theta}$.
\end{lemma}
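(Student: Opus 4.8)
The plan is to unwind the definition of $\calF_{\alpha,\rho}$ and exploit the nonexpansivity of the resolvent together with the cocoercivity of $B$ via Lemma~\ref{lemNonExp}. First I would use the characterization~\eqref{defprox2} of the resolvent to produce a reference point: since $\hat\theta\in\dom(A)$ and $\hat w\in A\hat\theta+B\hat\theta$, there is $\hat a\triangleq\hat w-B\hat\theta\in A\hat\theta$, so $\hat\theta = J_{\rho A}(\hat\theta+\rho\hat a)=J_{\rho A}(\hat\theta+\rho(\hat w-B\hat\theta))$. This rewrites $\hat\theta$ as the resolvent of a specific argument, which I can compare against $x^+ = J_{\rho A}(t)$ with $t=(1-\alpha)x+\alpha Gz-\rho(Bx-w)$. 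Because $J_{\rho A}$ is firmly nonexpansive (in particular $1$-Lipschitz, from maximal monotonicity of $A$), I get
\begin{align*}
\|x^+-\hat\theta\| \leq \bigl\|t - \bigl(\hat\theta+\rho(\hat w - B\hat\theta)\bigr)\bigr\|.
\end{align*}

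The next step is to expand the right-hand side. Substituting $t$ and collecting terms, the argument inside the norm becomes
\begin{align*}
(1-\alpha)x + \alpha Gz - \rho(Bx-w) - \hat\theta - \rho\hat w + \rho B\hat\theta
= (1-\alpha)(x-\hat\theta) + \alpha(Gz-\hat\theta) - \rho\bigl(Bx - B\hat\theta\bigr) + \rho(w-\hat w).
\end{align*}
Here I have written $-\hat\theta = (1-\alpha)(-\hat\theta)+\alpha(-\hat\theta)$ to split it across the two convex-combination terms. The plan is then to group the cocoercive part with the $(1-\alpha)(x-\hat\theta)$ term: I would rewrite $(1-\alpha)(x-\hat\theta)-\rho(Bx-B\hat\theta)$ as $(1-\alpha)\bigl[(x-\hat\theta)-\tfrac{\rho}{1-\alpha}(Bx-B\hat\theta)\bigr]$, applying the triangle inequality to separate this group, the $\alpha(Gz-\hat\theta)$ term, and the $\rho(w-\hat w)$ term.

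The key estimate is the cocoercive group. Setting $\rho' \triangleq \rho/(1-\alpha)$, the stepsize hypothesis $\rho\leq 2(1-\alpha)/L$ gives exactly $\rho'\leq 2/L$, so Lemma~\ref{lemNonExp} applies to $B$ with stepsize $\rho'$ and yields $\|(x-\hat\theta)-\rho'(Bx-B\hat\theta)\|\leq\|x-\hat\theta\|$. Multiplying back by $(1-\alpha)$ bounds the group by $(1-\alpha)\|x-\hat\theta\|$, and the triangle inequality on the remaining two terms contributes $\alpha\|Gz-\hat\theta\|+\rho\|w-\hat w\|$, producing precisely~\eqref{eqPreBounded}. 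I should handle the degenerate case $L=0$ separately but trivially: then $B$ is constant, so $Bx-B\hat\theta=0$ and the cocoercive group is simply $(1-\alpha)(x-\hat\theta)$, giving the same bound with no stepsize restriction needed.

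The main obstacle is purely bookkeeping rather than conceptual: I must split the constant $-\hat\theta$ correctly across the convex-combination weights so that the cocoercive term pairs with $(1-\alpha)(x-\hat\theta)$ and the rescaled stepsize lands exactly at $2/L$ under the hypothesis $\rho\leq 2(1-\alpha)/L$. If the grouping is done carelessly, the factor $(1-\alpha)$ will not cancel cleanly and Lemma~\ref{lemNonExp} will not be directly invocable. The firm nonexpansivity of the resolvent is standard (\cite[Prop.~23.8 or 4.4]{bauschke2011convex}) and needs only $1$-Lipschitz continuity here, so the only real care is in matching the stepsize bound to the lemma's hypothesis.
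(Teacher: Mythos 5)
Your proposal is correct and follows essentially the same route as the paper's proof: construct the reference identity $\hat\theta = J_{\rho A}(\hat\theta+\rho\hat a)$ with $\hat a = \hat w - B\hat\theta$, invoke nonexpansivity of the resolvent, regroup so that the cocoercive difference $Bx-B\hat\theta$ pairs with $(1-\alpha)(x-\hat\theta)$ at the rescaled stepsize $\rho/(1-\alpha)\le 2/L$, apply Lemma~\ref{lemNonExp}, and treat $L=0$ as the trivial constant-operator case. The bookkeeping you flag (splitting $-\hat\theta$ across the convex-combination weights) is exactly the step labeled (b) in the paper's derivation, so there is nothing to reconcile.
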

\begin{proof}
Select any $\hat{\theta}\in\dom(A)$ and $\hat{w} \in A\hat{\theta} +
B\hat{\theta}$.  Let $\hat{a} = \hat{w} - B\hat{\theta} \in A\hat{\theta}$.  It
follows immediately from \eqref{defprox2} that
\begin{align}\label{eqThetaHat}
\hat{\theta}=  J_{\rho A}(\hat{\theta} + \rho \hat{a}).
\end{align} 
Therefore, \eqref{eqF} and~\eqref{eqThetaHat} yield
\begin{align}
\|x^+ -\hat{\theta}\|
&=
\left\|J_{\rho A}
\big(
(1-\alpha)x+ \alpha G z - \rho (Bx-w)
\big)
% \right.\nonumber\\
% &
% \left. 
-
J_{\rho A}(\hat{\theta} + \rho \hat{a})
\right\|
\nonumber\\
&\overset{(a)}{\leq}
\left\|
(1-\alpha)x+ \alpha G z - \rho (B x-w)
-
\hat{\theta} - \rho \hat{a}
\right\|
\nonumber\\ 
&\overset{(b)}{=}
\left\|
(1-\alpha)
\left(x-\hat{\theta}
-\frac{\rho}{1-\alpha}
\left(
B x
-
B \hat{\theta}
\right)
\right)
+\alpha (G z-\hat{\theta}) 
+ \rho\left(w - \hat{a}-B \hat{\theta}\right)
\right\|
\nonumber\\
&\overset{(c)}{\leq}\label{eqTakeLong}
(1-\alpha)
\left\|
x-\hat{\theta}
-\frac{\rho}{1-\alpha}
\left(
Bx
-
B \hat{\theta}
\right)
\right\|
+\alpha\|G z-\hat{\theta}\|
+ \rho\left\|w - (\hat{a}+B \hat{\theta})\right\|
\\\nonumber 
&\overset{(d)}{\leq}
(1-\alpha)\|x-\hat{\theta}\|+\alpha\|G z-\hat{\theta}\| 
   + \rho\left\|w - \hat{w}\right\|.
\end{align}
To obtain (a), one uses the nonexpansivity of the resolvent
\cite[Prop.~23.8(ii)]{bauschke2011convex}. To obtain (b), one regroups terms and adds and
subtracts $B\hat{\theta}$. Then (c) follows from the triangle inequality.
Finally we consider (d): If $L>0$, apply Lemma \ref{lemNonExp} to the first term on the right-hand side of \eqref{eqTakeLong}
with the stepsize $\rho/(1-\alpha)$ which by assumption satisfies
\begin{align*}
\frac{\rho}{1-\alpha}\leq \frac{2}{L}
\end{align*} 
by Assumption \ref{assStep}. 
Alternatively, if $L=0$, implying that $B$ is
a constant-valued operator, then $B x=B\hat{\theta}$ and (d) is
just an equality.
\end{proof}
%Note that the $L=0$ case of the above proof did not require Assumption \ref{assStep}(3).
%In particular, $\rho$ need not satisfy any upper bound in this part of the
%analysis, and $\alpha$ may equal $1$ without forcing $\rho$ to be $0$.
%That being said, to establish boundedness of $\{x_i^k\}$, we will require
%$\urho_i\leq\rho_i^k\leq\orho_i$ for some fixed $\orho_i\geq
%\urho_i>0$, and in Lemma \ref{lemFirstBiggy} below we will require that
%$\rho_i^k\geq \urho_i>0$.

%It may also confuse the reader that lemmas XX are proved only for $k\geq2$ rather than $k\geq 1$. This is because for the first iterate we allow
%$b_i^0$ to be chosen arbitarily, meaning that $b_i^0$ may not equal $B_i x_i^0$. 
%This means that the inequalities XX and XX used to establish boundedness of $x_i^k$ and the ascent lemma will not hold in general. However this does not change the conclusions as they only need to hold for $k\geq 2$ anyway. 

%\subsection{The ``Ascent Lemma" for the Hyperplane}
\preprintversion{We now prove the key ``ascent lemma''. It shows that, while the
one-forward-step update is not guaranteed to find a separating hyperplane at
each iteration, it does make a certain kind of progress toward separation.}
\begin{lemma}\label{lemAscent0}
Suppose $(x^+,y^+) = \calF_{\alpha,\rho}(z,x,w;A,B,G)$, where
$\calF_{\alpha,\rho}$ is given in Definition \ref{defUp}. Recall $B$ is
$L^{-1}$-cocoercive. Let $y\in Ax+Bx$ and define $\varphi\triangleq \langle G
z-x,y-w\rangle$. Further, define $\varphi^+\triangleq \langle G z - x^+,y^+ -
w\rangle$, $t$ as in \eqref{eqF}, and $\hat{y}\triangleq \rho^{-1}(t-x^+)+B
x$. If \myc{$\alpha \in (0,1]$ and} $\rho\leq2(1-\alpha)/L$ whenever $L>0$, then
\begin{align}
\varphi^+
&\geq 
\frac{\rho}{2\alpha}
\left(
\|y^+ - w\|^2
+
\alpha \|\hat{y} - w\|^2
\right)
+
(1-\alpha)
\left(
\varphi
-\frac{\rho}{2\alpha}\|y-w\|^2
\right).
\label{eqLong0}
\end{align}
\end{lemma}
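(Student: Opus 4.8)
The plan is to analyze $\alpha\varphi^+$ rather than $\varphi^+$ directly, exploiting the resolvent structure of $x^+$. First I would record the algebraic consequences of Definition~\ref{defUp}. Writing $a^+ \triangleq \rho^{-1}(t - x^+)$, the resolvent characterization~\eqref{defprox2} gives $a^+ \in A x^+$, and then $y^+ = a^+ + Bx^+$ and $\hat y = a^+ + Bx$, so that $y^+ - \hat y = Bx^+ - Bx$. Since $y \in Ax + Bx$, I would set $a \triangleq y - Bx \in Ax$, which yields the clean relation $\hat y - y = a^+ - a$. Monotonicity of $A$ then gives $\langle x^+ - x, \hat y - y\rangle = \langle x^+ - x, a^+ - a\rangle \geq 0$, while cocoercivity of $B$ gives $\langle x^+ - x, Bx^+ - Bx\rangle \geq L^{-1}\|Bx^+ - Bx\|^2$ (both sides being zero when $L=0$).

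The crucial step is an identity extracted from the definition of $t$ in~\eqref{eqF}: substituting $t = x^+ + \rho a^+$ and rearranging yields
\begin{align*}
\alpha(Gz - x^+) = (1-\alpha)(x^+ - x) + \rho(\hat y - w).
\end{align*}
Pairing both sides with $y^+ - w$ converts the target quantity into
\begin{align*}
\alpha\varphi^+ = (1-\alpha)\langle x^+ - x, y^+ - w\rangle + \rho\langle \hat y - w, y^+ - w\rangle,
\end{align*}
which is the form I would work with. I expect producing and correctly orienting this identity to be the main obstacle, since everything downstream hinges on trading $Gz - x^+$ for a combination of the increments $x^+ - x$ and $\hat y - w$; a sign error here propagates everywhere. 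Pairing the same identity instead with $y - w$ and using $Gz - x^+ = (Gz - x) - (x^+ - x)$ also produces the auxiliary closed form $\langle x^+ - x, y - w\rangle = \alpha\varphi - \rho\langle \hat y - w, y - w\rangle$, which is how the term $\varphi$ re-enters.

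I would then bound the two inner products separately. For the second, the three-term polarization identity gives $\langle \hat y - w, y^+ - w\rangle = \tfrac12\|\hat y - w\|^2 + \tfrac12\|y^+ - w\|^2 - \tfrac12\|Bx^+ - Bx\|^2$, using $\hat y - y^+ = Bx - Bx^+$. For the first, I would split $y^+ - w = (\hat y - y) + (y - w) + (Bx^+ - Bx)$ and apply the two nonnegativity facts above to obtain $\langle x^+ - x, y^+ - w\rangle \geq \langle x^+ - x, y - w\rangle + L^{-1}\|Bx^+ - Bx\|^2$, then substitute the closed form for $\langle x^+ - x, y - w\rangle$.

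Assembling these pieces and subtracting the claimed right-hand side of~\eqref{eqLong0} (scaled by $\alpha$), the terms in $\|y^+ - w\|^2$ and $\alpha(1-\alpha)\varphi$ cancel exactly, the $\|\hat y - w\|^2$ surplus (from $\tfrac\rho2$ versus $\tfrac{\rho\alpha}2$) combines with $-\rho(1-\alpha)\langle \hat y - w, y - w\rangle$ and $\tfrac{\rho(1-\alpha)}2\|y-w\|^2$, and after completing the square the whole remainder collapses to
\begin{align*}
\tfrac{\rho(1-\alpha)}{2}\|\hat y - y\|^2 + \Big(\tfrac{1-\alpha}{L} - \tfrac{\rho}{2}\Big)\|Bx^+ - Bx\|^2 \geq 0.
\end{align*}
The first term is nonnegative outright (it equals $\tfrac{\rho(1-\alpha)}{2}\|a^+ - a\|^2$), and the second is nonnegative precisely because the hypothesis $\rho \leq 2(1-\alpha)/L$ forces its coefficient to be $\geq 0$; when $L=0$ the factor $Bx^+ - Bx$ vanishes and the term drops out. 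This is exactly where the stepsize restriction is consumed, and recovering this final nonnegativity establishes~\eqref{eqLong0}.
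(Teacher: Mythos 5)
Your proposal is correct and takes essentially the same route as the paper: your key identity is just a rearrangement of \eqref{eqSecondSub}, your assembled lower bound is exactly the paper's inequality \eqref{eqPaul}, and the finishing step---polarization followed by dropping the nonnegative remainder $\tfrac{\rho(1-\alpha)}{2}\|a^+-a\|^2+\bigl(\tfrac{1-\alpha}{L}-\tfrac{\rho}{2}\bigr)\|Bx^+-Bx\|^2$---is identical, including where the stepsize bound is consumed. The only differences are organizational: you carry $\alpha\varphi^+$ from the outset (so $\varphi^+$ never appears on the right-hand side, avoiding the paper's step of subtracting $(1-\alpha)\varphi^+$ from both sides), and you dispose of the $L=0$ case inline rather than by the separate argument the paper gives.
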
 

\begin{proof}
 Since $y\in Ax+Bx$, there exists $a\in Ax$ such that $y=a+Bx$. Let $a^+
 \triangleq \rho^{-1}(t-x^+)$. Note by \eqref{defprox2} that $a^+\in A x^+$.
 With this notation, $\hat{y} = a^++B x$.
 
 We may write the $x^+$-update in \eqref{eqF} as
 \begin{align*}
 x^++\rho a^+ = (1-\alpha)x+\alpha Gz - \rho (B x - w)
 \end{align*}
 which rearranges to
 \begin{align*}
 x^+ = (1-\alpha)x+\alpha Gz - \rho (\hat{y} - w)
 \implies 
- x^+ = -\alpha G z - (1-\alpha) x + \rho(\hat{y} - w).
\end{align*}
Adding $G z$ to both sides yields
\begin{align}\label{eqSecondSub}
G z - x^+ = (1-\alpha)(G z - x)+\rho(\hat{y} - w).
\end{align}
Substituting this equation into the definition of $\varphi^+$
yields
\begin{align}
\varphi^+
&=
\langle G z - x^+,y^+ - w\rangle 
\nonumber\\\nonumber
&=
\big\langle 
(1-\alpha)(G z - x) + \rho(\hat{y} - w),
y^+ - w\big\rangle  
\\\nonumber
&=
(1-\alpha)\langle G z - x, y^+ - w\rangle  
+
\rho
\langle \hat{y} - w, y^+ - w \rangle 
\\\nonumber
&=
(1-\alpha)\langle G z - x, y - w\rangle  
+
(1-\alpha)\langle G z - x, y^+ - y\rangle  
+ \rho
\langle \hat{y} - w, y^+ - w \rangle
\\\label{eqEnno}  
&=
(1-\alpha)\varphi
+
(1-\alpha)\langle G z - x, y^+ - y\rangle  
+
\rho
\langle \hat{y} - w, y^+ - w \rangle.
\end{align}
We now focus on the second term 
in
%on the right-hand side of 
\eqref{eqEnno}.
Assume for now that $L>0$ (we will deal with the $L=0$ case below).
We write
\begin{align}
\langle G z - x,y^+ - y\rangle 
&=
\langle x^+ - x,y^+ - y\rangle 
+
\langle G z - x^+,y^+ - y\rangle 
\nonumber\\
&=
\langle x^+ - x,a^+ - a\rangle 
+
\langle x^+ - x,B x^+ - Bx\rangle 
\coap
+
\langle G z - x^+,y^+ - y\rangle 	
\label{eqDeduce}\\
&\geq 
L^{-1}\|B x^+ - B x\|^2
+
\langle G z - x^+,y^+ - y\rangle 
\nonumber\\\nonumber 
&=
L^{-1}\|B x^+ - B x\|^2
+
\langle G z - x^+,y^+ - w\rangle 
\coap
+
\langle G z - x^+,w - y\rangle 
\nonumber\\
&=
L^{-1}\|B x^+ - B x\|^2
+
\varphi^+
+
\langle G z - x^+,w - y\rangle.
\label{eqSecondTerm}
\end{align}
\myc{To derive \eqref{eqDeduce} we substituted $(y^+,y) =(a^+ + B x^+,a+Bx)$} 
and \myc{for the following} inequality we used the monotonicity of $A$ and
$L^{-1}$-cocoercivity of $B$ (recall that $a\in A x$ and $a^+\in A x^+$). 
Substituting the resulting inequality
back into
\eqref{eqEnno} 
yields 
\myc{ 
\begin{align*}
\varphi^+
 &= 
 (1-\alpha)\varphi
 +
 (1-\alpha)\langle G z - x, y^+ - y\rangle  
 +
 \rho
 \langle \hat{y} - w, y^+ - w \rangle
\\
&\geq 
 (1-\alpha)\varphi
 +(1-\alpha)
 \left(
 L^{-1}\|B x^+ - B x\|^2
 +
 \varphi^+
 +
 \langle G z - x^+,w - y\rangle
 \right)
 \\
 &
 \qquad
 +
 \rho
 \langle \hat{y} - w, y^+ - w \rangle.
\end{align*}
Subtracting $(1-\alpha)\varphi^+$ from both sides of the above inequality produces}
\begin{align}
\alpha\varphi^+
&\geq 
(1-\alpha)
\left(
\varphi
+
L^{-1}\|B x^+ - B x\|^2
+
\langle G z - x^+,w - y\rangle
\right)
\coap 
+
\rho
\langle \hat{y} - w, y^+ - w \rangle.
\label{eqLanding}
\end{align}
Using \eqref{eqSecondSub} once again, this time to the third term on the
right-hand side of \eqref{eqLanding}, we write
\begin{align}
\langle G z - x^+,w - y\rangle
&=
\big\langle(1-\alpha)(G z - x)+\rho(\hat{y}-w),
w - y\big\rangle
\nonumber\\
&=
(1-\alpha) \langle G z - x, w - y\rangle 
+
\rho
\langle\hat{y}-w,w - y\rangle 
\nonumber\\\label{eqHoa}
&=
(\alpha-1)\varphi
-
\rho
\langle\hat{y}-w,y-w\rangle.
\end{align}
Substituting this equation back into \eqref{eqLanding} yields
\begin{align}
\alpha\varphi^+
&\geq 
(1-\alpha)\left(
\alpha\varphi
+
L^{-1}\|B x^+ - B x\|^2
-
\rho
\langle\hat{y}-w,y - w\rangle
\right)
\coap 
+
\rho
\langle \hat{y} - w, y^+ - w \rangle.\label{eqPaul}
\end{align}
We next use the identity $\langle x_1,x_2\rangle =
\frac{1}{2}\|x_1\|^2+\frac{1}{2}\|x_2\|^2-\frac{1}{2}\|x_1-x_2\|^2$ on both inner
products in \eqref{eqPaul}, as follows:
\begin{align}
\langle \hat{y} - w,y-w\rangle
&=
\frac{1}{2}
\left(
\|\hat{y} - w\|^2
+
\|y-w\|^2
-
\|\hat{y} - y\|^2
\right)
\nonumber\\\label{eqGetas}
&=
\frac{1}{2}
\left(
\|\hat{y} - w\|^2
+
\|y-w\|^2
-
\|a^+ - a\|^2
\right)
\end{align}
and
\begin{align}
\langle \hat{y} - w,y^+ - w\rangle
&=
\frac{1}{2}
\left(
\|\hat{y} - w\|^2
+
\|y^+ - w\|^2
-
\|\hat{y} - y^+\|^2
\right)
\nonumber\\\label{eqGetbs}
&=
\frac{1}{2}
\left(
\|\hat{y} - w\|^2
+
\|y^+ - w\|^2
-
\|B x^+ - B x\|^2
\right).
\end{align}
Here we have used the identities
\begin{align*}
\hat{y} - y
&= a^+ + B x - (a + B x)=a^+ - a
\\
\hat{y} - y^+
&= a^+ + Bx - (a^+ + B x^+)=Bx - Bx^+.
\end{align*}
Using \eqref{eqGetas}--\eqref{eqGetbs} in \eqref{eqPaul} \myc{yields
\begin{align}
\alpha\varphi^+
&\geq 
(1-\alpha)\left(
\alpha\varphi
+
L^{-1}\|B x^+ - B x\|^2
-
\rho
\langle\hat{y}-w,y - w\rangle
\right)
\coap 
+
\rho
\langle \hat{y} - w, y^+ - w \rangle
\nonumber\\
&=
(1-\alpha)
\left(
\alpha\varphi +L^{-1}\|B x^+ - Bx\|^2
\right)
\nonumber\\
&\qquad
-
\frac{\rho(1-\alpha)}{2}
\left(
\|\hat{y}-w\|^2 + \|y - w\|^2 - \|a^+-a\|^2
\right)
\nonumber\\
&\qquad 
+\frac{\rho}{2}
\left(
\|\hat{y}-w\|^2+\|y^+-w\|^2-\|Bx^+-Bx\|^2
\right)
\nonumber\\
&= 
(1-\alpha)
\left(
\alpha\varphi
-\frac{\rho}{2}\|y-w\|^2
+
\frac{\rho}{2}
\|a^+ - a\|^2
\right)
\coap 
+
\left(\frac{1-\alpha}{L} - \frac{\rho}{2}\right)
\|B x^+ - B x\|^2
\paoc 
+\frac{\rho}{2}
\left(
\|y^+ - w\|^2
+
\alpha \|\hat{y} - w\|^2
\right).\nonumber 
\end{align}
Consider this last expression: since $\alpha \leq 1$, the coefficient
$(1-\alpha)\rho/2$ multiplying $\|a^+ - a\|^2$ is nonnegative. Furthermore,
since $\rho\leq2(1-\alpha)/L$, the coefficient multiplying $\|B x^+ - B x\|^2$
is positive. Therefore we may drop these two terms from the above inequality
and divide by $\alpha$ to obtain \eqref{eqLong0}.}

Finally, we deal with the case in which $L = 0$, which implies that $B x =
v$ for some $v\in\calH$ for all $x\in\calH$. The main difference is
that the $\|B x^+ - Bx\|^2$ terms are no longer present since $B x^+ =
B x$. The analysis is the same up to \eqref{eqEnno}.
In this case $B x^+ = v$ so instead of
\eqref{eqSecondTerm} we may deduce from \eqref{eqDeduce}  that
\begin{align*}
\langle G z - x,y^+ - y\rangle 
\; \geq \;
\varphi^+
+
\langle G z - x^+,w - y\rangle. 
\end{align*}  
Since $B x^+=B x=v$ is constant we also have that 
$$
\hat{y} = a^+ + B x=a^+ + v = a^+ + B x^+ = y^+
$$ 
Thus, instead of \eqref{eqLanding} in this case we have the
simpler inequality
\begin{align}
\alpha\varphi^+
&\geq 
(1-\alpha)\left(\varphi
+
\langle G z - x^+,w - y\rangle
\right)
+
\rho
\|y^+ - w\|^2
. \label{eqNew}
\end{align}
The term $\langle G z - x^+,w - y\rangle$ in \eqref{eqNew}
is dealt with just as in \eqref{eqLanding}, by substitution
of \eqref{eqSecondSub}. This step now leads via \eqref{eqHoa} to
\begin{align*}
\alpha\varphi^+
&\geq 
\alpha(1-\alpha)\varphi
-\rho(1-\alpha)\langle y^+ - w,y-w\rangle
+
\rho
\|y^+ - w\|^2.
\end{align*}
Once again using $\langle x_1,x_2\rangle = \frac{1}{2}\|x_1\|^2+\frac{1}{2}\|x_2\|^2 - \frac{1}{2}\|x_1-x_2\|^2$ on the second term on the r.h.s.~above yields
\begin{align*}
\alpha\varphi^+
&\geq
\alpha(1-\alpha)\varphi
+
\rho
\|y^+ - w\|^2
\coap
-
\frac{\rho(1-\alpha)}{2} 
\left(
\|y^+ - w\|^2
+
\|y - w\|^2
-
\|y^+ - y\|^2
\right).
\end{align*}
We can lower-bound the  $\|y^+ - y\|^2$ term by $0$. Dividing through by $\alpha$ and rearranging, we obtain 
\begin{align*}
\varphi^+
&\geq 
\frac{\rho(1+\alpha)}{2\alpha}
\|y^+ - w\|^2
+
(1-\alpha)
\left(
\varphi
-\frac{\rho}{2\alpha}\|y-w\|^2
\right).
\end{align*}
Since $y^+=\hat{y}$ in the $L=0$ case, this is equivalent to \eqref{eqLong0}.\ourqed
\end{proof}

\subsection{Finite Termination of Backtracking}\label{secFinite}
In all the following lemmas in sections \ref{secFinite} and \ref{secBounded}
regarding algorithms \ref{AlgfullWithBT}--\ref{AlgProjUpdate}, assumptions
\ref{assMono} and \ref{assStep} are in effect and will not be explicitly
stated in each lemma. We start by proving that \texttt{backTrack} terminates
in a finite number of iterations, and that the stepsizes it returns are
bounded away from $0$.
\begin{lemma}\label{lemFinite}
For $i\in\mathcal{B}$, Algorithm \ref{AlgBackTrack} terminates in a finite
number of iterations for all $k\geq 1$. There exists $\urho_i>0$ such that
$\rho_i^k\geq\urho_i$ for all $k\geq 1$, where $\rho_i^k$ is the stepsize
returned by Algorithm \ref{AlgBackTrack} on line \ref{lineDoBT}. Furthermore $\rho_i^k\leq\hat{\rho}$ for all $k\geq 1$. 
\end{lemma}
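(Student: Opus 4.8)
The key observation driving the entire proof is that the two termination tests checked on lines~\ref{lineC1} and~\ref{lineC2} of Algorithm~\ref{AlgBackTrack} are \emph{precisely} the conclusions of Lemma~\ref{lemPreBounded} and Lemma~\ref{lemAscent0}, respectively, applied to the candidate pair $(\tilde{x}_j,\tilde{y}_j)=\calF_{\alpha_i,\tilde{\rho}_j}(z^k,x_i^{k-1},w_i^k;A_i,B_i,G_i)$ with stepsize $\tilde{\rho}_j$. Indeed, test~\ref{lineC1} is exactly~\eqref{eqPreBounded} with $\rho=\tilde{\rho}_j$, and test~\ref{lineC2} is exactly~\eqref{eqLong0} once one identifies $y^+=\tilde{y}_j$, $\hat{y}=\hat{y}_j$, $\varphi^+=\varphi_j^+$, and $\varphi$ with the quantity computed on line~\ref{lineGetPhi}. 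The hypothesis $y\in A_i x_i^{k-1}+B_i x_i^{k-1}$ required by Lemma~\ref{lemAscent0} holds because the algorithm maintains $y_i^{k-1}\in T_i x_i^{k-1}=A_i x_i^{k-1}+B_i x_i^{k-1}$ (with $y_i^0$ supplied as input), and the hypotheses $\hat{\theta}_i\in\dom(A_i)$, $\hat{w}_i\in A_i\hat{\theta}_i+B_i\hat{\theta}_i$ needed by Lemma~\ref{lemPreBounded} are algorithm inputs. Both lemmas guarantee their conclusions whenever $L_i=0$, or whenever $L_i>0$ and $\tilde{\rho}_j\leq 2(1-\alpha_i)/L_i$. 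Writing $\bar{\rho}_i\triangleq 2(1-\alpha_i)/L_i$ (interpreted as $+\infty$ when $L_i=0$), \emph{both} tests therefore necessarily pass as soon as $\tilde{\rho}_j\leq\bar{\rho}_i$.

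Finite termination is then immediate. The trial stepsizes obey $\tilde{\rho}_{j+1}=\delta\tilde{\rho}_j$ with $\delta\in(0,1)$, so $\tilde{\rho}_j=\delta^{j-1}\tilde{\rho}_1\to 0$; after finitely many reductions $\tilde{\rho}_j\leq\bar{\rho}_i$, at which point both tests pass and \texttt{backTrack} returns. When $L_i>0$ this happens within at most $1+\lceil\log_{1/\delta}(\tilde{\rho}_1/\bar{\rho}_i)\rceil$ inner iterations, and when $L_i=0$ the loop exits at $j=1$.

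For the two-sided bounds on the returned stepsize I would argue by induction on $k$. The upper bound is the easier half: since $\orho=\min\{(1+\alpha_i\eta)\rho,\hat{\rho}\}\leq\hat{\rho}$ and the initial trial satisfies $\tilde{\rho}_1\in[\rho,\orho]$, we get $\tilde{\rho}_1\leq\hat{\rho}$; as trials only shrink, $\rho_i^k=\tilde{\rho}_j\leq\tilde{\rho}_1\leq\hat{\rho}$. The base case $\rho_i^0\leq\hat{\rho}$ is Assumption~\ref{assStep}, and the inductive hypothesis $\rho_i^{k-1}\leq\hat{\rho}$ is also what guarantees $\rho\leq\orho$, so that the interval on line~\ref{lineChooseRho} is nonempty (using $\eta\geq 0$, $\alpha_i\geq 0$). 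For the lower bound, set $\urho_i\triangleq\min\{\rho_i^0,\,\delta\bar{\rho}_i\}$. If \texttt{backTrack} returns at its first trial $j=1$, then $\rho_i^k=\tilde{\rho}_1\geq\rho=\rho_i^{k-1}$; if it returns at some $j\geq 2$, then the trial at $j-1$ failed, which by the key observation forces $\tilde{\rho}_{j-1}>\bar{\rho}_i$, whence $\rho_i^k=\delta\tilde{\rho}_{j-1}>\delta\bar{\rho}_i$. A short induction on $k$ using these two cases yields $\rho_i^k\geq\urho_i$ for all $k\geq 1$.

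The one step demanding genuine care is the identification carried out in the first paragraph: one must verify that every quantity in tests~\ref{lineC1}--\ref{lineC2} lines up exactly with the hypotheses and conclusions of lemmas~\ref{lemPreBounded} and~\ref{lemAscent0}, and in particular that, for $i\in\mathcal{B}$ with $L_i>0$, the averaging parameter satisfies $\alpha_i<1$ so that the threshold $\bar{\rho}_i$ is strictly positive. Once this bookkeeping is settled, the finite-termination claim and both stepsize bounds are short consequences of the geometric decrease of $\tilde{\rho}_j$ together with the invariant that a failed trial must overshoot $\bar{\rho}_i$.
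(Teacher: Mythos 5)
Your proposal is correct and follows essentially the same route as the paper's own proof: both identify the two termination tests with the conclusions of Lemma~\ref{lemPreBounded} and Lemma~\ref{lemAscent0} (the latter requiring the graph condition $y_i^{k-1}\in A_i x_i^{k-1}+B_i x_i^{k-1}$, maintained inductively via the definition of $\calF$), deduce finite termination from the geometric decrease of the trial stepsizes past the threshold $2(1-\alpha_i)/L_i$, and obtain the bounds $\rho_i^k\leq\hat{\rho}$ and $\rho_i^k\geq\urho_i=\min\{\rho_i^0,\,2\delta(1-\alpha_i)/L_i\}$ by the same induction on $k$ coupled with the ``a failed trial must have overshot the threshold'' argument. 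The paper interleaves the nonemptiness of the interval on line~\ref{lineChooseRho}, the graph condition, and finite termination into a single joint induction, whereas you split the upper- and lower-bound inductions apart, but this is a presentational difference only.
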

\begin{proof}

Assume we are at iteration $k\geq 1$ in Algorithm \ref{AlgfullWithBT} and
\texttt{backTrack} has been called through line \ref{lineDoBT} for some
$i\in\mathcal{B}$. The internal variables within \texttt{backTrack} are
defined in terms of the variables passed from Algorithm \ref{AlgfullWithBT} as
follows: $z=z^k$,  $x = x_i^{k-1}$, $w=w_i^k$, $y=y_i^{k-1}$, $\rho = \rho_i^{k-1}$ and $\eta = \eta_i^{k-1}$. Furthermore
$\alpha=\alpha_i$,
$\hat{\theta}=\hat{\theta}_i$, $\hat{w}=\hat{w}_i$, $A=A_i$, $B=B_i$, and
$G=G_i$. 
The calculation on line \ref{lineGetPhi} of Algorithm \ref{AlgBackTrack} yields $\varphi =
\varphi_{i,k-1}(z^k,w_i^k)$.
In the following argument, we mostly refer to the internal name of
the variables within \texttt{backTrack} without explicitly making the above
substitutions. With that in mind, let $L=L_i$ be the cocoercivity constant of
$B=B_i$.

Recall that $\tilde{\rho}^{(i,k)}$ is the initial trial stepsize $\tilde{\rho}_1$ chosen on line \ref{lineChooseRho} of \texttt{backTrack}.
We must establish that the interval on line \ref{lineChooseRho} is always nonempty and so a valid initial stepsize can be chosen. Since $\eta\alpha\geq 0$, this will be true if  $\hat{\rho}\geq\rho=\rho_i^{k-1}$, which we will prove by induction. 
Note that by Assumption \ref{assStep}, $\hat{\rho}\geq \rho_i^0$ for all $i\in\mathcal{B}$. Therefore for $k=1$, $\hat{\rho}\geq \rho=\rho_i^0$. 
We will prove the induction step below.

Observe that backtracking terminates via line \ref{lineBTreturn} if two conditions are met. The first condition,
\begin{align}\label{eqCond1}
\|\tilde{x}_{j} - \hat{\theta}\|\leq 	(1-\alpha)\|x-\hat{\theta}\|
+
\alpha\| G z- \hat{\theta}\|+\tilde{\rho}_{j}\|w - \hat{w}\|,
\end{align}
is identical to \eqref{eqPreBounded} of Lemma~\ref{lemPreBounded}, with
$\tilde{x}_j$ and $\tilde{\rho}_j$ respectively in place of $x^+$ and $\rho$.
The initialization step of Algorithm~\ref{AlgBackTrack} provides us with
$\hat{w}\in A\hat{\theta}+B\hat{\theta}$ for some $\hat{\theta}\in
\dom(A)$. Furthermore, since
\begin{align*}
(\tilde{x}_j,\tilde{y}_j)
=
\calF_{\alpha,\tilde{\rho}_j}(z,x,w;A,B,G),
\end{align*}
the findings of Lemma \ref{lemPreBounded} may be applied. In particular, if $L
> 0$ and $\tilde{\rho}_j\leq 2(1-\alpha)/L$, then \eqref{eqCond1} will be met.
Alternatively, if $L=0$, \eqref{eqCond1} will hold for any value of the
stepsize $\tilde{\rho}_j>0$.

Next, consider the second termination condition,
\begin{align}\label{eqCond2}
\varphi_j^+
\geq			 
\frac{\tilde{\rho}_{j}}{2\alpha}
\left(
\|\tilde{y}_{j} - w\|^2
+
\alpha 
\|\hat{y}_{j} - w\|^2
\right)+ (1-\alpha)
\left(
\varphi
-
\frac{\tilde{\rho}_{j}}{2\alpha}
\|y - w\|^2
\right).
\end{align}
This relation is identical to \eqref{eqLong0} of Lemma \ref{lemAscent0}, with
$(\tilde{y}_j,\hat{y}_j,\tilde{\rho}_j)$ in place of $(y^+,\hat{y},\rho)$.
However, to apply the lemma we must show that $y=y_i^{k-1}\in A x_i^{k-1}+B
x_i^{k-1}=A x+B x$. We will also prove this by induction.

For $k=1$, $y=y_i^{k-1}\in A x_i^{k-1}+B x_i^{k-1}=A x+B x$ holds by the
initialization step of Algorithm~\ref{AlgfullWithBT}. Now assume that 
at iteration $k\geq 2$ it holds that 
$y=y_i^{k-1}\in A x_i^{k-1}+B x_i^{k-1}=A x+B x$
and furthermore that 
$\hat{\rho}\geq\rho=\rho_i^{k-1}$, therefore the interval on line \ref{lineChooseRho}  is nonempty. We may then apply the findings of Lemma
\ref{lemAscent0} to conclude that if $L > 0$ and $\tilde{\rho}_j\leq
2(1-\alpha)/L$, then condition \eqref{eqCond2} is satisfied.  Or, if $L=0$,
condition~\eqref{eqCond2} is satisfied for any $\tilde{\rho}_j>0$.

Combining the above observations, we conclude that if $L > 0$ and
$\tilde{\rho}_j\leq 2(1-\alpha)/L$, backtracking will terminate for that iteration $j$ of \texttt{backTrack} via line \ref{lineBTreturn}. Or, if $L=0$, it will terminate in the
first iteration of \texttt{backTrack}. The stepsize decrement condition on line \ref{lineStepDec} of
the backtracking procedure implies that $\tilde{\rho}_j\leq 2(1-\alpha)/L$
will eventually hold for large enough $j$, and hence that the two backtracking
termination conditions must eventually hold.

Let $j^*\geq 1$ be the iteration at which
backtracking terminates when called for operator $i$ at iteration $k$ of
Algorithm~\ref{AlgfullWithBT}. For the pair $(x_i^k,y_i^k)$ returned by
\texttt{backTrack} on line \ref{lineDoBT} of Algorithm \ref{AlgfullWithBT}, we
may write
\begin{align*}
(x_i^k,y_i^k)&=(\tilde{x}_{j^*},\tilde{y}_{j^*}) 
\coapnquad 
= \calF_{\alpha,\tilde{\rho}_{j^*}}(z,x,w;A,B,G)=
\calF_{\alpha_i^k,\rho_i^k}(z^k,x_i^{k-1},w_i^k;A_i,B_i,G_i).
\end{align*} 
Thus, by the definition of $\calF$ in \eqref{eqF}, $y_i^k\in A_i x_i^k + B_i
x_i^k$. Therefore, induction establishes that $y_i^k\in A_i x_i^k + B_i x_i^k$
holds for all $k\geq 1$. 

Now the returned stepsize must satisfy $\rho_i^k=\tilde{\rho}_{j^*}\leq \tilde{\rho}^{(i,k)}\leq \hat{\rho}$. In the next iteration, $\rho=\rho_i^{k}\leq \hat{\rho}$. Thus we have also established by induction that $\hat{\rho}\geq \rho=\rho_i^k$ and therefore  that the interval on line \ref{lineChooseRho} is nonempty for all iterations $k\geq 1$.
Finally, we now also infer by induction that \texttt{backTrack}
terminates in a finite number of iterations for all $k\geq 1$ and
$i\in\mathcal{B}$.

 Now $\tilde{\rho}^{(i,k)}$ must be chosen in the range $$\tilde{\rho}^{(i,k)}\in \left[\rho_i^{k-1},
 \min\left\{(1+\alpha_i\eta_i^{k-1})\rho_i^{k-1},\hat{\rho}\right\}\right].$$
Since we have established that this interval remains nonempty, it holds trivially that
$\tilde{\rho}^{(i,k)}\geq \rho_i^{k-1}$.
For all $k\geq 1$ and $i\in\mathcal{B}$, the returned stepsize
$\rho_i^k=\tilde{\rho}_{j^*}$ must satisfy
\begin{align}
&(\forall\,i: L_i>0):\quad 
\rho_i^k
\geq 
\min\left\{
\tilde{\rho}^{(i,k)},
\frac{2\delta(1-\alpha_i)}{L_i}
\right\}
\label{eqHoA1}\\ 
&(\forall\,i: L_i=0):\quad \rho_i^k = \tilde{\rho}^{(i,k)}.
\nonumber 
\end{align}
Therefore for all $k\geq 1$ and
all $i\in\mathcal{B}$ such that $L_i > 0$, one has 
\begin{align*}
\rho_i^k
\geq 
\min\left\{
\rho_i^{k-1},
\frac{2\delta(1-\alpha_i)}{L_i}
\right\}
&\geq 
\min\left\{
\rho_i^{1},
\frac{2\delta(1-\alpha_i)}{L_i}
\right\}
\\
&\geq 
\min\left\{
\rho_i^{0},
\frac{2\delta(1-\alpha_i)}{L_i}
\right\}\triangleq \urho_i>0,
\end{align*}
where the first inequality uses \eqref{eqHoA1} and $\tilde{\rho}^{(i,k)}\geq\rho_i^{k-1}$, the second inequality recurses, and the final inequality is just 
\eqref{eqHoA1} for $k=1$. If $L_i=0$, the argument is simply
$$
\rho_i^k = \tilde{\rho}^{(i,k)}\geq \rho_i^{k-1}=\tilde{\rho}^{(i,k-1)}\geq\ldots\geq  
\rho_i^1  =\tilde{\rho}^{(i,1)} = 
\rho_i^0\triangleq \urho_i>0. 
\vspace{-5ex}
$$\ourqed
\end{proof}

\subsection{Boundedness Results and their Direct Consequences}\label{secBounded}
% This section establishes that the iterates are bounded and states some simple
% consequences.

\begin{lemma}\label{lemBounded}
\myc{	
	For all $i=1,\ldots,n$, the sequences
	$\{x_i^k\}$ and $\{y_i^k\}$ are bounded. 
}
\end{lemma}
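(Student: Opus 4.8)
The plan is to bound $\{x_i^k\}$ first, using the contractive estimate of Lemma~\ref{lemPreBounded}, and then to bound $\{y_i^k\}$ by expressing it explicitly in terms of $x_i^k$, $x_i^{k-1}$, and the already-bounded primal-dual iterates. First, for each $i$ fix a reference pair $(\hat\theta_i,\hat w_i)$ with $\hat\theta_i\in\dom(A_i)$ and $\hat w_i\in A_i\hat\theta_i+B_i\hat\theta_i$ (such a pair exists since $\dom A_i\neq\emptyset$ and $B_i$ is single-valued with full domain; for $i\in\mathcal B$ this is exactly the pair passed to \texttt{backTrack}). Writing $(x_i^k,y_i^k)=\calF_{\alpha_i,\rho_i^k}(z^k,x_i^{k-1},w_i^k;A_i,B_i,G_i)$, where $\rho_i^k=\rho_i$ is constant for $i\notin\mathcal B$, I claim that in both stepsize regimes the estimate
\begin{align*}
\|x_i^k-\hat\theta_i\|\le (1-\alpha_i)\|x_i^{k-1}-\hat\theta_i\|+\alpha_i\|G_iz^k-\hat\theta_i\|+\rho_i^k\|w_i^k-\hat w_i\|
\end{align*}
holds: for $i\notin\mathcal B$ it follows from Lemma~\ref{lemPreBounded} together with the bound $\rho_i\le 2(1-\alpha_i)/L_i$ of Assumption~\ref{assStep}; for $i\in\mathcal B$ it is precisely the first backtracking termination condition \eqref{eqCond1}, enforced at line~\ref{lineC1}, which coincides with the conclusion \eqref{eqPreBounded} of Lemma~\ref{lemPreBounded}.

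Next I would set $\xi_k\triangleq\|x_i^k-\hat\theta_i\|$ and argue that the remaining two terms form a bounded additive perturbation. By Lemma~\ref{lemIsProject}(1), the sequence $\{p^k\}$, and hence $\{z^k\}$ and $\{w_i^k\}_{i=1}^{n-1}$, is bounded; boundedness of $w_n^k=-\sum_{i=1}^{n-1}G_i^*w_i^k$ follows since each $G_i$ is bounded, and $\|G_iz^k\|$ is bounded as well (with $G_n=I$). By Lemma~\ref{lemFinite} and Assumption~\ref{assStep}, the stepsizes satisfy $0<\underline\rho_i\le\rho_i^k\le\hat\rho$. Hence the quantity $\alpha_i\|G_iz^k-\hat\theta_i\|+\rho_i^k\|w_i^k-\hat w_i\|$ is bounded by a constant, and since $\tau\triangleq 1-\alpha_i\in[0,1)$ (using $\alpha_i>0$), Lemma~\ref{lemBounddSeq} yields boundedness of $\{\xi_k\}$, and therefore of $\{x_i^k\}$, for every $i=1,\dots,n$.

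For the $y$-sequence, recall from \eqref{eqF} that $y_i^k=a_i^k+b_i^k$ with $b_i^k=B_ix_i^k$ and $a_i^k=(\rho_i^k)^{-1}(t_i^k-x_i^k)\in A_ix_i^k$, where $t_i^k=(1-\alpha_i)x_i^{k-1}+\alpha_iG_iz^k-\rho_i^k(B_ix_i^{k-1}-w_i^k)$. The term $b_i^k$ is bounded: when $L_i>0$, cocoercivity makes $B_i$ $L_i$-Lipschitz, so $\|B_ix_i^k\|\le\|B_i\hat\theta_i\|+L_i\|x_i^k-\hat\theta_i\|$ is bounded by the previous step, while for $L_i=0$ the value $b_i^k=v_i$ is constant. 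Substituting the expression for $t_i^k$ gives
\begin{align*}
a_i^k=(\rho_i^k)^{-1}\big[(1-\alpha_i)x_i^{k-1}+\alpha_iG_iz^k-x_i^k\big]-B_ix_i^{k-1}+w_i^k,
\end{align*}
and every term on the right is bounded: $(\rho_i^k)^{-1}\le\underline\rho_i^{-1}$ is bounded above, and $x_i^{k-1}$, $x_i^k$, $G_iz^k$, $B_ix_i^{k-1}$, and $w_i^k$ are all bounded. Therefore $y_i^k=a_i^k+b_i^k$ is bounded, completing the proof.

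I expect no single step to be genuinely difficult; the argument is a bookkeeping exercise once the ingredients are assembled. The point that requires the most care is treating the two stepsize regimes uniformly: for $i\notin\mathcal B$ the bound $\rho_i^k\le 2(1-\alpha_i)/L_i$ is imposed directly, whereas for $i\in\mathcal B$ one must not claim this bound for the returned stepsize but instead observe that the backtracking \emph{termination condition} \eqref{eqCond1} supplies exactly the inequality \eqref{eqPreBounded} that Lemma~\ref{lemPreBounded} would otherwise provide. A secondary subtlety is keeping track of the index $i=n$, where $G_n=I$ and $w_n^k$ is defined only implicitly through the other dual variables.
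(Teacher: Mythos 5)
Your proof is correct and follows essentially the same route as the paper's: establish the recursion $\|x_i^k-\hat\theta_i\|\le(1-\alpha_i)\|x_i^{k-1}-\hat\theta_i\|+\alpha_i\|G_iz^k-\hat\theta_i\|+\rho_i^k\|w_i^k-\hat w_i\|$ via Lemma~\ref{lemPreBounded} for $i\notin\mathcal{B}$ and the line~\ref{lineC1} termination condition for $i\in\mathcal{B}$, invoke Lemma~\ref{lemIsProject} and Lemma~\ref{lemBounddSeq} with $\tau=1-\alpha_i$ to bound $\{x_i^k\}$, and then bound $\{y_i^k\}$ by expanding the $y^+$-update of $\calF$ and using the stepsize bounds $\underline{\rho}_i\le\rho_i^k\le\hat\rho$. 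Your handling of the term $B_ix_i^k$ via the Lipschitz bound implied by cocoercivity (rather than the paper's appeal to mere continuity) is, if anything, slightly more careful than the original.
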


\begin{proof}
	To prove this, we first establish that for $i=1,\ldots,n$ and $k\geq 1$ 
	\begin{align}\label{eqPre2Bounded}
	\|x_i^k-\hat{\theta}_i\|
	\leq 
	(1-\alpha_i)\|x_i^{k-1}-\hat{\theta}_i\|+\alpha_i\|G_i z^k-\hat{\theta}_i\| + \hat{\rho}\left\|w_i^k - \hat{w}_i\right\|
	\end{align}	
	For $i\in\mathcal{B}$, Lemma \ref{lemFinite} establishes that
	\texttt{backTrack} terminates for finite $j\geq1$ for all $k\geq 1$. For
	fixed $k\geq 1$ and $i\in\mathcal{B}$, let $j^*\geq 1$ be the iteration of
	\texttt{backTrack} that terminates. At termination, the following
	condition is satisfied via line \ref{lineC1}:
\begin{align*} 
\|\tilde{x}_{j^*} - \hat{\theta}\|\leq 	(1-\alpha)\|x-\hat{\theta}\|
+
\alpha\| G z- \hat{\theta}\|+\tilde{\rho}_{j^*}\|w - \hat{w}\|.
\end{align*}
Into this inequality, now substitute in the following variables from
Algorithm \ref{AlgfullWithBT}, as passed to and from \texttt{backTrack}:
$x_i^k = \tilde{x}_{j^*}$, $\hat{\theta}_i=\hat{\theta}$, $\alpha_i=\alpha$,
$x_i^{k-1}=x$, $G_i=G$, $z^k = z$, $\rho_i^k = \tilde{\rho}_{j^*}$, $w_i^k =
w$, and $\hat{w}_i = w$.  Further noting that $\rho_i^k\leq \hat{\rho}$, the result is~\eqref{eqPre2Bounded}.

For $i\notin\mathcal{B}$, we note that line \ref{lineDoNoBT} of Algorithm
\ref{AlgfullWithBT} reads as
\begin{align*}
(x_i^k,y_i^k) = \calF_{\alpha_i, \rho_i}(z^k, x_i^{k-1}, w_i^k; A_i, B_i, G_i)
\end{align*}
and since Assumption \ref{assStep} holds, we may apply Lemma \ref{lemPreBounded}. Further noting that by Assumption \ref{assStep} $\rho_i\leq\hat{\rho}$ we arrive at
yield \eqref{eqPre2Bounded}.

Since $\{z^k\}$, and $\{w_i^k\}$ are
bounded by Lemma~\ref{lemIsProject} and $\|G_i\|$ is bounded by
Assumption~\ref{AssMonoProb}, boundedness of $\{x_i^k\}$ now follows by applying
Lemma~\ref{lemBounddSeq} with $\tau=1-\alpha_i<1$ to \eqref{eqPre2Bounded}.

Next, boundedness of $B_i x_i^k$ follows from the continuity of $B_i$. Since
Lemma \ref{lemFinite} established that \texttt{\backTrack} terminates in a
finite number of iterations we have for any $k \geq 2$ that
\begin{align*}
(x_i^k,y_i^k) = \calF_{\alpha_i^k, \rho_i^k}(z^k, x_i^{k-1}, w_i^k; A_i, B_i, G_i)
\end{align*}
where for $i\notin \mathcal{B}$ $\rho_i^k \triangleq \rho_i$.
Expanding the $y^+$-update in the definition of $\calF$ in \eqref{eqF}, we may write
\begin{align*}
y_i^k = (\rho_i^k)^{-1} 
\left(
(1-\alpha_i)x_i^{k-1}+\alpha_i G_i z^k -\rho_i^k(B_i x_i^{k-1} - w_i^k)
-x_i^k
\right)
+ B x_i^k.
\end{align*}
Since $G_i$, $z^k$, and $w_i^k$ are bounded, for $i\in\mathcal{B}$ 
$\rho_i^k\leq\hat{\rho}$, and $\rho_i^k\geq \urho_i$ (using Lemma \ref{lemFinite}
for $i\in\mathcal{B}$), and for $i\notin\mathcal{B}$ $\rho_i^k=\rho_i$ is constant, we conclude that $y_i^k$ remains bounded.\ourqed
\end{proof}
With $\{x_i^k\}$ and $\{y_i^k\}$ bounded for all $i=1,\ldots,n$, the
boundedness of $\nabla\varphi_k$ follows immediately:
\begin{lemma}\label{lemBoundedGrad}
The sequence $\{\nabla\varphi_k\}$ is bounded. If Algorithm
\ref{AlgfullWithBT} never terminates via line \ref{lineReturn},
$\limsup_{k\to\infty}\varphi_k(p^k)\leq 0$.
\end{lemma}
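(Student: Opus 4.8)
The plan is to derive both conclusions directly from the uniform bounds on the pairs $(x_i^k,y_i^k)$ established in Lemma~\ref{lemBounded}, combined with the explicit form of the gradient recorded in Lemma~\ref{LemGradAffine}. There is essentially no new analytic content: the substantive work has already been done upstream, and this lemma is a bookkeeping corollary.

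First I would establish boundedness of $\{\nabla\varphi_k\}$. By part~\ref{item:gradForm} of Lemma~\ref{LemGradAffine}, the gradient of the affine function $\varphi_k$ in the inner product \eqref{gammanorm} is
\[
\nabla\varphi_k = \left(\sum_{i=1}^{n-1} G_i^* y_i^k + y_n^k,\; x_1^k - G_1 x_n^k,\;\ldots,\; x_{n-1}^k - G_{n-1} x_n^k\right).
\]
Each of the $n$ coordinates is a fixed finite linear combination of the quantities $x_i^k$ and $y_i^k$, with coefficients given by the bounded linear maps $G_i$ and $G_i^*$ (recall $\|G_i^*\| = \|G_i\|$). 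Lemma~\ref{lemBounded} guarantees that $\{x_i^k\}$ and $\{y_i^k\}$ are bounded for every $i = 1,\ldots,n$, and Assumption~\ref{AssMonoProb} guarantees each $\|G_i\| < \infty$. A routine triangle-inequality estimate then bounds each coordinate in the corresponding $\calH_i$, so $\{\nabla\varphi_k\}$ is bounded in the norm \eqref{gammanorm}.

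Having shown that $\|\nabla\varphi_k\|$ remains bounded for all $k\geq 1$, the second conclusion follows immediately from part~3 of Lemma~\ref{lemIsProject}: under the stated hypothesis that Algorithm~\ref{AlgfullWithBT} never terminates via line~\ref{lineReturn}, that lemma asserts precisely that boundedness of $\|\nabla\varphi_k\|$ forces $\limsup_{k\to\infty}\varphi_k(p^k)\leq 0$. Hence both claims hold.

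I do not anticipate any genuine obstacle; the only point needing a moment's care is confirming that the gradient formula of Lemma~\ref{LemGradAffine}, originally stated for the separator \eqref{hplane}, applies verbatim to the $\varphi_k$ produced by the new one-forward-step procedure. It does, since the $\varphi_k$ used here is of exactly the form \eqref{hplane} --- only the rule for selecting the pairs $(x_i^k,y_i^k)$ has changed, not the structure of the affine separator. The real difficulty was absorbed into Lemma~\ref{lemBounded}, which in turn rests on the contractive estimate of Lemma~\ref{lemPreBounded} and the finite termination of backtracking from Lemma~\ref{lemFinite}.
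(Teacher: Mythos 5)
Your proposal is correct and follows essentially the same route as the paper's own proof: both invoke the gradient formula of Lemma~\ref{LemGradAffine}, the boundedness of $\{x_i^k\}$ and $\{y_i^k\}$ from Lemma~\ref{lemBounded} together with the boundedness of the $G_i$ from Assumption~\ref{AssMonoProb}, and then conclude via part~3 of Lemma~\ref{lemIsProject}. Your closing remark that the separator's affine structure~\eqref{hplane} is unchanged by the new pair-selection rule is a valid (and implicit in the paper) observation, not a divergence.
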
 
\begin{proof}
By Lemma \ref{LemGradAffine}, $\nabla_z\varphi_k = \sum_{i=1}^n G_i^* y_i^k$,
which is bounded since each $G_i$ is bounded by assumption and each
$\{y_i^k\}$ is bounded by Lemma \ref{lemBounded}. Furthermore,
$\nabla_{w_i}\varphi_k= x_i^k - G_i x_n^k$  is bounded using the same two
lemmas. That $\limsup_{k\to\infty}\varphi_k(p^k)\leq 0$ then immediately
follows from Lemma \ref{lemIsProject}(3).\ourqed
\end{proof}

Using the boundedness of $\{x_i^k\}$ and $\{y_i^k\}$, we can next derive the
following simple bound relating $\varphi_{i,k-1}(z^k,w_i^k)$ to
$\varphi_{i,k-1}(z^{k-1},w_i^{k-1})$:

\begin{lemma}\label{lemLBPhi}
There exists $M_1,M_2\geq 0$ such that for all $k\geq 2$ and $i=1,\ldots,n$,
\begin{align*}
\varphi_{i,k-1}(z^k,w_i^k)
&\geq 
\varphi_{i,k-1}(z^{k-1},w_i^{k-1})
-
M_1\|w_i^k-w_i^{k-1}\|
\coap 
-M_2\|G_i\|\| z^k -  z^{k-1}\|.
\end{align*}
\end{lemma}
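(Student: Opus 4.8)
The plan is to bound the difference $\varphi_{i,k-1}(z^k,w_i^k) - \varphi_{i,k-1}(z^{k-1},w_i^{k-1})$ directly from the definition $\varphi_{i,k-1}(z,w_i) = \langle G_i z - x_i^{k-1}, y_i^{k-1} - w_i\rangle$. The key observation is that both expressions use the \emph{same} pair $(x_i^{k-1},y_i^{k-1})$ — only the arguments $(z,w_i)$ change from $(z^{k-1},w_i^{k-1})$ to $(z^k,w_i^k)$. So this is simply a question of how a fixed affine function of $(z,w_i)$ changes as its argument moves, and the whole lemma should reduce to the Cauchy--Schwarz inequality plus the boundedness results already in hand.

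Concretely, first I would compute the exact difference
\begin{align*}
\varphi_{i,k-1}(z^k,w_i^k) - \varphi_{i,k-1}(z^{k-1},w_i^{k-1})
&= \langle G_i z^k - x_i^{k-1}, y_i^{k-1} - w_i^k\rangle
 - \langle G_i z^{k-1} - x_i^{k-1}, y_i^{k-1} - w_i^{k-1}\rangle.
\end{align*}
Expanding and regrouping, I would split this into a term involving $w_i^k - w_i^{k-1}$ and a term involving $z^k - z^{k-1}$, namely
\begin{align*}
&= \langle G_i(z^k - z^{k-1}), y_i^{k-1} - w_i^{k-1}\rangle
 - \langle G_i z^k - x_i^{k-1}, w_i^k - w_i^{k-1}\rangle.
\end{align*}
Then I would apply Cauchy--Schwarz to each inner product and bound the $z$-term by $\|G_i\|\,\|z^k - z^{k-1}\|\,\|y_i^{k-1} - w_i^{k-1}\|$ and the $w$-term by $\|G_i z^k - x_i^{k-1}\|\,\|w_i^k - w_i^{k-1}\|$.

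The crux of the argument is that the factors $\|y_i^{k-1} - w_i^{k-1}\|$ and $\|G_i z^k - x_i^{k-1}\|$ are uniformly bounded over $k$, so they can be absorbed into the constants $M_2$ and $M_1$ respectively. This follows immediately from the boundedness results already established: $\{x_i^k\}$ and $\{y_i^k\}$ are bounded by Lemma~\ref{lemBounded}, while $\{z^k\}$ and $\{w_i^k\}$ are bounded by Lemma~\ref{lemIsProject}, and $\|G_i\|$ is finite by Assumption~\ref{AssMonoProb}. Defining $M_1 \triangleq \sup_k \|G_i z^k - x_i^{k-1}\|$ and $M_2 \triangleq \sup_k \|y_i^{k-1} - w_i^{k-1}\|$ (taken uniformly over $i$, since there are finitely many blocks) yields exactly the claimed inequality after noting that the equality above is, in particular, a lower bound.

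I do not anticipate a genuine obstacle here — the statement is essentially a continuity estimate for a fixed affine separator under perturbation of its argument, and the only substantive ingredient is the prior boundedness of the iterates. The one point requiring mild care is the bookkeeping of which factor multiplies which displacement (the $z$-displacement pairs with $\|y_i^{k-1}-w_i^{k-1}\|$ and the $w$-displacement with $\|G_i z^k - x_i^{k-1}\|$), so that the constants $M_1,M_2$ are correctly identified and seen to be finite; choosing them as suprema over both $k$ and the finite index set $\{1,\dots,n\}$ makes them uniform as the statement requires.
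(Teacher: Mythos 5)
Your proof is correct and follows essentially the same route as the paper's: expand the bilinear difference term-by-term, apply Cauchy--Schwarz, and absorb the bounded factors (guaranteed by Lemma~\ref{lemIsProject}, Lemma~\ref{lemBounded}, and the boundedness of $G_i$) into the constants $M_1,M_2$. The only difference is immaterial bookkeeping: the paper pairs the $w$-displacement with $\|G_i z^{k-1}-x_i^{k-1}\|$ and the $z$-displacement with $\|y_i^{k-1}-w_i^k\|$, whereas you pair them with $\|G_i z^k - x_i^{k-1}\|$ and $\|y_i^{k-1}-w_i^{k-1}\|$ respectively; both choices are covered by the same boundedness results.
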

\begin{proof}
For each $i\in\{1,\ldots,n\}$, let $M_{1,i},M_{2,i}\geq 0$ be
respective bounds on $\big\{\|G_i z^{k-1}-x_i^{k-1}\|\big\}$ and
$\big\{\|y_i^{k-1}-w_i^k\|\big\}$, which must exist by
Lemma~\ref{lemIsProject}, the boundedness of $\{x_i^k\}$ and $\{y_i^k\}$, and
the boundedness of $G_i$.  Let $M_1 = \max_{i=1,\ldots,m} \{ M_{1,i}\}$ and
$M_2 = \max_{i=1,\ldots,m} \{ M_{2,i}\}$.  Then, for any $k \geq 2$ and
$i\in\{1,\ldots,n\}$, we may write
\begin{align*}
\coapAmp\varphi_{i,k-1}(z^k,w_i^k)
\coapDD
&=
\langle G_i z^k - x_i^{k-1},y_i^{k-1}-w_i^k\rangle 
\\
&=
\langle G_i z^{k-1} - x_i^{k-1},y_i^{k-1}-w_i^k\rangle 
+
\langle G_i z^k - G_i z^{k-1},y_i^{k-1}-w_i^k\rangle 
\\
&=
\langle G_i z^{k-1} - x_i^{k-1},y_i^{k-1}-w_i^{k-1}\rangle 
\coap 
+
\langle G_i z^{k-1} - x_i^{k-1},w_i^{k-1}-w_i^k\rangle 
\paoc 
+\langle G_i z^k - G_i z^{k-1},y_i^{k-1}-w_i^k\rangle 
\\
&\geq 
\varphi_{i,k-1}(z^{k-1},w_i^{k-1})
-
M_1\|w_i^k-w_i^{k-1}\|
-M_2\|G_i\|\| z^k -  z^{k-1}\|,
\end{align*}
where the last step uses the Cauchy-Schwarz inequality and the definitions of
$M_1$ and $M_2$.\ourqed
\end{proof}

\subsection{A Lyapunov-Like Recursion for the Hyperplane}\label{secLyap}
 We now establish a Lyapunov-like recursion for the hyperplane. For this
 purpose, we need two more definitions.

\begin{definition}\label{defYhat}
	For all $k\geq 1$, since Lemma \ref{lemFinite} establishes that Algorithm \ref{AlgBackTrack} terminates in a finite number of iterations, we may write for $i=1,\ldots,n$:
	\begin{align}\nonumber
	(x_i^k,y_i^k) = \calF_{\myc{\alpha_i}, \rho_i^k}(z^k, x_i^{k-1}, w_i^k; A_i, B_i, G_i)
	\end{align}
	where for $i\notin\mathcal{B}$ $\rho_i^k=\rho_i$ are actually fixed. 
	Using \eqref{defprox2} and the $x^+$-update in \eqref{eqF}, there exists $a_i^k\in A_i x_i^k$ such that
	\begin{align}\nonumber
	x_i^k+\rho_i^k a_i^k = (1-\alpha_i)x_i^{k-1}+\alpha_i G_i z^k -\rho_i^k (B_i x_i^{k-1} - w_i^k).
	\end{align}
	Define $\hat{y}_i^k\triangleq a_i^k+B_i x_i^{k-1}$.	
\end{definition}
\myc{
\begin{definition}
	For $i\notin\mathcal{B}$ we will use $\rho_i^k\triangleq \rho_i$, even though these stepsizes are fixed, so that we can use the same statements as for $i\in\mathcal{B}$. Similarly we will use $\urho_i\triangleq\rho_i$ for $i\notin\mathcal{B}$. 
\end{definition}

\begin{lemma}\label{lemAv2step}
	For all $k\geq 1$, and $i=1,\ldots,n$
	\begin{align}\label{eqStep2}
	\frac{\rho_i^{k+1}}{\alpha_i}\|y_i^k - w_i^k\|^2
	\leq 
	\frac{\rho_i^k}{\alpha_i}
	\left(
	\|y_i^k - w_i^k\|^2
	+
	\alpha_i
	\|\hat{y}_i^k - w_i^k\|^2
	\right).
	\end{align}	
\end{lemma}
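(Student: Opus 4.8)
The plan is to split the argument into the two cases $i\notin\mathcal{B}$ and $i\in\mathcal{B}$, corresponding to the fixed and backtracked stepsizes, and in each case to control the next stepsize $\rho_i^{k+1}$ in terms of $\rho_i^k$ and the ratio encoded by $\eta_i^k$. In both cases the common factor $1/\alpha_i$ may be ignored, so it suffices to establish $\rho_i^{k+1}\|y_i^k-w_i^k\|^2\le \rho_i^k\big(\|y_i^k-w_i^k\|^2+\alpha_i\|\hat{y}_i^k-w_i^k\|^2\big)$.

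For $i\notin\mathcal{B}$ the stepsize is held constant, so $\rho_i^{k+1}=\rho_i^k=\rho_i$. Cancelling the common $\frac{\rho_i}{\alpha_i}\|y_i^k-w_i^k\|^2$ term, \eqref{eqStep2} reduces to $0\le \rho_i\|\hat{y}_i^k-w_i^k\|^2$, which holds because $\rho_i>0$ and a squared norm is nonnegative. This case is immediate.

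For $i\in\mathcal{B}$, the essential observation concerns how \texttt{backTrack} selects its stepsizes. By Lemma \ref{lemFinite}, the procedure called during iteration $k+1$ terminates at some $j^*$, returning $\rho_i^{k+1}=\tilde{\rho}_{j^*}$. Because the trial stepsize is only ever decreased inside the loop (line \ref{lineStepDec}), one has $\rho_i^{k+1}\le \tilde{\rho}^{(i,k+1)}$, the initial trial stepsize; and the selection interval on line \ref{lineChooseRho} forces $\tilde{\rho}^{(i,k+1)}\le (1+\alpha_i\eta_i^k)\rho_i^k$. Here $\eta_i^k$ is the value returned at the \emph{previous} call (the argument $\eta$ passed into iteration $k+1$), namely $\eta_i^k=\|\hat{y}_i^k-w_i^k\|^2/\|y_i^k-w_i^k\|^2$, set just before the return on line \ref{lineBTreturn} with the identifications $\tilde{y}_{j^*}=y_i^k$, $\tilde{\rho}_{j^*}=\rho_i^k$, and $\hat{y}_{j^*}=\hat{y}_i^k$ (the latter matching Definition \ref{defYhat}, as already noted in the text preceding the lemma). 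Chaining these bounds gives $\rho_i^{k+1}\le (1+\alpha_i\eta_i^k)\rho_i^k$; multiplying through by $\|y_i^k-w_i^k\|^2/\alpha_i$ and substituting the value of $\eta_i^k$ produces \eqref{eqStep2} directly.

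There is no analytic difficulty here; the only point requiring care is the degenerate situation $\|y_i^k-w_i^k\|=0$, where $\eta_i^k$ would involve a division by zero. I would dispose of this first and separately: whenever $\|y_i^k-w_i^k\|=0$ the left-hand side of \eqref{eqStep2} vanishes while the right-hand side equals $\rho_i^k\|\hat{y}_i^k-w_i^k\|^2\ge 0$, so the inequality holds trivially, and the backtracking bound above is invoked only when $\|y_i^k-w_i^k\|>0$. The remaining effort is purely the bookkeeping that links the internal names of \texttt{backTrack} at iteration $k+1$ to the algorithm-level quantities $y_i^k$, $\hat{y}_i^k$, $\eta_i^k$, $\rho_i^k$, and $\rho_i^{k+1}$; once these identifications are fixed, the result follows by elementary algebra.
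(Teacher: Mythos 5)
Your proof is correct and takes essentially the same route as the paper's: the case $i\notin\mathcal{B}$ is dispatched trivially via $\rho_i^{k+1}=\rho_i^k=\rho_i$, and for $i\in\mathcal{B}$ you chain $\rho_i^{k+1}\le\tilde{\rho}^{(i,k+1)}\le(1+\alpha_i\eta_i^k)\rho_i^k$ from the interval on line~\ref{lineChooseRho}, substitute $\eta_i^k=\|\hat{y}_i^k-w_i^k\|^2/\|y_i^k-w_i^k\|^2$, and multiply through by $\alpha_i^{-1}\|y_i^k-w_i^k\|^2$, which is precisely the paper's argument. Your separate handling of the degenerate case $\|y_i^k-w_i^k\|=0$ is a small point of extra rigor that the paper omits, but it does not alter the approach.
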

\begin{proof}
	For $i\in\mathcal{B}$, 
recall that $\tilde{\rho}^{(i,k)}$ is the initial trial stepsize chose on line \ref{lineChooseRho} of \texttt{backTrack} at iteration $k$ for some $i\in\mathcal{B}$. 
The condition on line \ref{lineChooseRho} of \texttt{backTrack} guarantees that
\begin{align*}
\tilde{\rho}^{(i,k+1)}
\leq 
\rho_i^k
\left(
1
+
\alpha_i
\frac{ 
\|\hat{y}_i^k - w_i^k\|^2
}{
\|y_i^k - w_i^k\|^2
}
\right).
\end{align*}
Multiplying through by $\alpha_i^{-1}\|y_i^k - w_i^k\|^2$ and noting that $\rho_i^{k+1}\leq \tilde{\rho}^{(i,k+1)}$ proves the lemma. 

For $i\notin\mathcal{B}$ the expression holds trivially because $\rho_i^{k+1}=\rho_i^k=\rho_i$.  	
\ourqed 
\end{proof}
}
%In addition to assumptions \ref{assMono}--\ref{assStep}, Assumption \ref{assAv2step} is now in effect and will not be explicitly stated in the upcoming lemmas of sections \ref{secLyap}--\ref{secProveCond1}.

\begin{lemma}\label{lemAscent1}
	For all $k\geq 2$ and $i=1,\ldots,n$,
	\begin{multline}
	\varphi_{i,k}(z^k,w_i^k)
	-
	\frac{\rho_i^k}{2\myc{\alpha_i}}
	\left(
	\|y_i^k - w_i^k\|^2+\myc{\alpha_i}\|\hat{y}_i^k - w_i^k\|^2
	\right) 
	\\
	\geq 
	(1-\myc{\alpha_i})
	\left(
	\varphi_{i,k-1}(z^k,w_i^k)
	-
	\frac{\rho_i^k}{2\myc{\alpha_i}}
	\|y_i^{k-1} - w_i^k\|^2
	\right) \label{eqAprelim}
	\end{multline}
	and
	\begin{align}
	\varphi_{i,k}(z^k,w_i^k)
	-&
	\frac{\rho_i^{k+1}}{2\myc{\alpha_i}}
	\|y_i^k - w_i^k\|^2	
	\coap 
	\geq
	(1-\myc{\alpha_i})
	\left(
	\varphi_{i,k-1}(z^k,w_i^k)
	-
	\frac{\rho_i^k}{2\myc{\alpha_i}}
	\|y_i^{k-1} - w_i^k\|^2
	\right).\label{eqAscend}
	\end{align}
\end{lemma}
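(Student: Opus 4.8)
The plan is to obtain \eqref{eqAprelim} as a direct specialization of the ascent inequality \eqref{eqLong0} of Lemma~\ref{lemAscent0} to the iterates produced by the algorithm, and then to derive \eqref{eqAscend} from \eqref{eqAprelim} by shrinking one term on the left-hand side via Lemma~\ref{lemAv2step}. The two displayed inequalities share an identical right-hand side, so the entire argument reduces to relating their left-hand sides, and almost no new computation is required.

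To establish \eqref{eqAprelim} I would split into the cases $i\notin\mathcal{B}$ and $i\in\mathcal{B}$. When $i\notin\mathcal{B}$, I apply Lemma~\ref{lemAscent0} with the substitutions $z=z^k$, $x=x_i^{k-1}$, $w=w_i^k$, $\alpha=\alpha_i$, $\rho=\rho_i^k=\rho_i$, $(A,B,G)=(A_i,B_i,G_i)$, and $y=y_i^{k-1}$. Under these substitutions the output $(x^+,y^+)$ of $\calF$ is exactly $(x_i^k,y_i^k)$ computed on line~\ref{lineDoNoBT}; the quantities $\varphi$ and $\varphi^+$ become $\varphi_{i,k-1}(z^k,w_i^k)$ and $\varphi_{i,k}(z^k,w_i^k)$ respectively; and the auxiliary point $\hat{y}$ of Lemma~\ref{lemAscent0} coincides with $\hat{y}_i^k$ of Definition~\ref{defYhat}. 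The hypotheses $\alpha_i\in(0,1]$ and $\rho_i\le 2(1-\alpha_i)/L_i$ (when $L_i>0$) hold by Assumption~\ref{assStep}, and the required membership $y_i^{k-1}\in\gra T_i$ holds because, for $k\ge 2$, $y_i^{k-1}$ is the $y^+$-output of $\calF$ and hence lies in $A_i x_i^{k-1}+B_i x_i^{k-1}$. Rearranging \eqref{eqLong0} then yields \eqref{eqAprelim}. When $i\in\mathcal{B}$, I would instead observe that \eqref{eqAprelim} is \emph{precisely} the second backtracking termination test checked on line~\ref{lineC2} of Algorithm~\ref{AlgBackTrack}, under the identifications $(\tilde{x}_{j^*},\tilde{y}_{j^*},\hat{y}_{j^*},\tilde{\rho}_{j^*})=(x_i^k,y_i^k,\hat{y}_i^k,\rho_i^k)$ at the terminating iteration $j^*$. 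Since Lemma~\ref{lemFinite} guarantees that backtracking terminates finitely, this test holds for the returned stepsize, and so \eqref{eqAprelim} holds for $i\in\mathcal{B}$ as well.

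To pass from \eqref{eqAprelim} to \eqref{eqAscend}, I invoke Lemma~\ref{lemAv2step}, which after dividing through by $2$ gives
\[
\frac{\rho_i^{k+1}}{2\alpha_i}\|y_i^k-w_i^k\|^2
\;\le\;
\frac{\rho_i^k}{2\alpha_i}\Big(\|y_i^k-w_i^k\|^2+\alpha_i\|\hat{y}_i^k-w_i^k\|^2\Big).
\]
Because this quantity is subtracted from $\varphi_{i,k}(z^k,w_i^k)$ on the left of \eqref{eqAprelim}, subtracting the smaller term $\frac{\rho_i^{k+1}}{2\alpha_i}\|y_i^k-w_i^k\|^2$ in its place only increases the left-hand side; hence \eqref{eqAprelim} implies \eqref{eqAscend}, whose right-hand side is unchanged.

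The computations are light, so the real work is bookkeeping, and I expect the main obstacle to be verifying that every substituted object lines up exactly with the abstract quantities of Lemma~\ref{lemAscent0} and of the backtracking test. In particular one must confirm that the $\hat{y}$ generated internally by $\calF$ matches $\hat{y}_i^k$ of Definition~\ref{defYhat}, and that the membership $y_i^{k-1}\in\gra T_i$ needed to invoke Lemma~\ref{lemAscent0} is genuinely in force for $k\ge 2$ (which is where the restriction $k\ge 2$ enters). Once these identifications are pinned down, the lemma follows immediately from Lemma~\ref{lemAscent0}, the termination condition on line~\ref{lineC2}, and Lemma~\ref{lemAv2step}.
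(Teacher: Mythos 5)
Your proposal is correct and follows essentially the same route as the paper's own proof: both split into the cases $i\in\mathcal{B}$ (where \eqref{eqAprelim} is read off from the backtracking termination test on line~\ref{lineC2}, using Lemma~\ref{lemFinite} for finite termination) and $i\notin\mathcal{B}$ (where it follows from Lemma~\ref{lemAscent0} via Assumption~\ref{assStep}), and both then obtain \eqref{eqAscend} from \eqref{eqAprelim} by Lemma~\ref{lemAv2step}. Your explicit verification that $y_i^{k-1}\in A_i x_i^{k-1}+B_i x_i^{k-1}$ for $k\geq 2$ and that the internal $\hat{y}$ of $\calF$ matches $\hat{y}_i^k$ of Definition~\ref{defYhat} is bookkeeping the paper leaves implicit, but it is the same argument.
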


\begin{proof}
	Take any $i\in\mathcal{B}$.  Lemma~\ref{lemFinite} guarantees the finite
	termination of \coapDD \texttt{backTrack}. Now consider the backtracking
	termination condition
	\begin{align*}
	\varphi_j^+
	\geq			 
	\frac{\tilde{\rho}_{j}}{2\alpha}
	\left(
	\|\tilde{y}_{j} - w\|^2
	+
	\alpha 
	\|\hat{y}_{j} - w\|^2
	\right)+ (1-\alpha)
	\left(
	\varphi
	-
	\frac{\tilde{\rho}_{j}}{2\alpha}
	\|y - w\|^2
	\right).
	\end{align*}
	Fix some $k\geq 2$, and let $j^*\geq 1$ be the iteration at which
	\texttt{backTrack} terminates. In the above inequality, make the following
	substitutions for the internal variables of \texttt{backTrack} by those
	passed in/out of the function: $\varphi_{i,k}(z^k,x_i^k) =
	\varphi_{j^*}^+$, $\rho_i^k =
	\tilde{\rho}_{j^*}$, \myc{$\alpha_i = \alpha$}, $y_i^k = \tilde{y}_j$, $w_i^k
	= w$, $\varphi_{i,k-1}(z^k,w_i^k) = \varphi$. Furthermore, $\hat{y}_i^k =
	\hat{y}_{j^*}$ where $\hat{y}_i^k$ is defined in Definition \ref{defYhat}. Together,
	these substitutions yield~\eqref{eqAprelim}. We can then apply
	Lemma \ref{lemAv2step} to get
	\eqref{eqAscend}. 
	
	Now take any $i \in \{1,\ldots,n\} \backslash \mathcal{B}$. From
	line~\ref{lineDoNoBT} of Algorithm~\ref{AlgfullWithBT}, Assumption
	\ref{assStep}, and Lemma \ref{lemAscent0}, we directly deduce
	\eqref{eqAprelim}. Combining this relation with~\eqref{eqStep2} we obtain
	\eqref{eqAscend}.\ourqed
\end{proof}

\subsection{Finishing the Proof}\label{secProveCond1}
We now work toward establishing the conditions of Lemma~\ref{lemStrat}. Unless otherwise specified, we henceforth assume that
Algorithm~\ref{AlgfullWithBT} runs indefinitely and does not terminate at
line~\ref{lineReturn}. Termination at line~\ref{lineReturn} is dealt with in Theorem \ref{thmMain} to come. 
\begin{lemma}\label{lemFirstBiggy}
For all $i=1,\ldots,n$, we have $y_i^k-w_i^k\to 0$ and
$\varphi_k(p^k)\to 0$.
\end{lemma}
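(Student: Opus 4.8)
The plan is to introduce the Lyapunov-like quantity
\[
\psi_i^k \triangleq \varphi_{i,k}(z^k,w_i^k) - \frac{\rho_i^{k+1}}{2\alpha_i}\|y_i^k - w_i^k\|^2, \qquad i=1,\ldots,n,\ k\geq 1,
\]
which is precisely the left-hand side of \eqref{eqAscend}, and to show it satisfies a contractive recursion with vanishing additive error. The key observation is that $\psi_i^{k-1}$ carries the coefficient $\rho_i^{(k-1)+1}=\rho_i^k$ on $\|y_i^{k-1}-w_i^{k-1}\|^2$, which is exactly the coefficient appearing on the right-hand side of \eqref{eqAscend}, so the shifted stepsizes line up. First I would bound the gap between the bracketed term on the right of \eqref{eqAscend} and $\psi_i^{k-1}$: Lemma \ref{lemLBPhi} controls $\varphi_{i,k-1}(z^k,w_i^k)-\varphi_{i,k-1}(z^{k-1},w_i^{k-1})$ from below by $-M_1\|w_i^k-w_i^{k-1}\|-M_2\|G_i\|\,\|z^k-z^{k-1}\|$, while $\big|\,\|y_i^{k-1}-w_i^k\|^2-\|y_i^{k-1}-w_i^{k-1}\|^2\,\big|\leq C\|w_i^k-w_i^{k-1}\|$ for a constant $C$ arising from the boundedness of $\{y_i^k\}$ and $\{w_i^k\}$ (Lemmas \ref{lemBounded} and \ref{lemIsProject}), the coefficient $\rho_i^k/(2\alpha_i)$ being bounded by $\hat{\rho}/(2\alpha_i)$ via Lemma \ref{lemFinite}. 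Since $z^k-z^{k-1}\to 0$ and $w_i^k-w_i^{k-1}\to 0$ by Lemma \ref{lemIsProject}, combining these estimates yields, for $k\geq 2$, a recursion $\psi_i^k \geq (1-\alpha_i)\psi_i^{k-1} - b_i^k$ with $b_i^k\to 0$.

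With the recursion established, I would apply Lemma \ref{lemBoundBelow} (with $\tau=1-\alpha_i\in[0,1)$) to obtain $\liminf_{k\to\infty}\psi_i^k\geq 0$ for each $i$. Summing the definition of $\psi_i^k$ over $i$ and using the identity $\varphi_k(p^k)=\sum_{i=1}^n\varphi_{i,k}(z^k,w_i^k)$ gives
\[
\varphi_k(p^k) = \sum_{i=1}^n \psi_i^k + S^k, \qquad S^k \triangleq \sum_{i=1}^n \frac{\rho_i^{k+1}}{2\alpha_i}\|y_i^k-w_i^k\|^2 \geq 0.
\]
Because $\liminf_k\sum_i\psi_i^k\geq\sum_i\liminf_k\psi_i^k\geq 0$ while $\limsup_k\varphi_k(p^k)\leq 0$ by Lemma \ref{lemBoundedGrad}, subadditivity of $\limsup$ applied to $S^k=\varphi_k(p^k)-\sum_i\psi_i^k$ yields $\limsup_k S^k\leq 0$, whence $S^k\to 0$ since $S^k\geq 0$. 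As every summand of $S^k$ is nonnegative and $\rho_i^{k+1}\geq\urho_i>0$ (Lemma \ref{lemFinite}, with $\urho_i=\rho_i$ for $i\notin\mathcal{B}$), each term forces $\|y_i^k-w_i^k\|^2\to 0$, establishing $y_i^k-w_i^k\to 0$ for all $i$. Finally, $S^k\geq 0$ gives $\varphi_k(p^k)\geq\sum_i\psi_i^k$, so $\liminf_k\varphi_k(p^k)\geq\liminf_k\sum_i\psi_i^k\geq 0$, which together with $\limsup_k\varphi_k(p^k)\leq 0$ forces $\varphi_k(p^k)\to 0$.

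The main obstacle is the bookkeeping in the first paragraph: aligning the shifted stepsize indices $\rho_i^{k+1}$ and $\rho_i^k$ so that $\psi_i^{k-1}$ genuinely reappears on the right-hand side, and verifying that the additive error $b_i^k$ is $o(1)$ rather than merely bounded. This rests on three facts working in concert — the successive-difference limits $z^k-z^{k-1}\to 0$, $w_i^k-w_i^{k-1}\to 0$, the uniform boundedness of the iterates and stepsizes, and the exact matching of the coefficients on the two $\|y_i^{k-1}-\cdot\|^2$ terms. Once the recursion is in place, the remainder is a routine $\liminf$/$\limsup$ argument.
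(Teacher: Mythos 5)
Your proposal is correct and follows essentially the same route as the paper's proof: your $\psi_i^k$ is exactly the paper's quantity $r_i^k$ from \eqref{rdef}, your recursion $\psi_i^k \geq (1-\alpha_i)\psi_i^{k-1} - b_i^k$ is the paper's \eqref{eqAscending} obtained in the same way (applying Lemma \ref{lemLBPhi} and a boundedness estimate on $\|y_i^{k-1}-w_i^k\|^2 - \|y_i^{k-1}-w_i^{k-1}\|^2$ to the right-hand side of \eqref{eqAscend}), and the concluding $\liminf$/$\limsup$ argument via Lemmas \ref{lemBoundBelow} and \ref{lemBoundedGrad} matches the paper, differing only in the inessential order in which $\varphi_k(p^k)\to 0$ and $y_i^k - w_i^k \to 0$ are extracted.
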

\begin{proof}Fix any $i\in\{1,\ldots,n\}$.
	%The finite termination result follows trivially from line \ref{lineReturn} of Algorithm \ref{AlgProjUpdate}. From now on we assume the algorithm produces an infinite sequence of iterates. 
First, note that for all $k\geq 2$,
\begin{align}
\|y_i^{k-1}-w_i^k\|^2
&=
\|y_i^{k-1}-w_i^{k-1}\|^2
+
2\langle y_i^{k-1}-w_i^{k-1},w_i^{k-1}-w_i^k\rangle 
\coap 
+
\|w^{k-1}_i-w^k_i\|^2
\nonumber\\\nonumber
&\leq 
\|y_i^{k-1}-w_i^{k-1}\|^2
+
M_3\|w^k_i-w_i^{k-1}\|
+
\|w^k_i - w^{k-1}_i\|^2
\\\label{eqprevBound1}
&=
\|y_i^{k-1}-w_i^{k-1}\|^2
+
d_i^k,
\end{align}
where
$
d_i^k \triangleq M_3\|w_i^k-w_i^{k-1}\|
+
\|w^k_i - w^{k-1}_i\|^2
$
and $M_3\geq 0$ is a bound on $2\|y_i^{k-1}-w_i^{k-1}\|$, which must exist
because both $\{y_i^k\}$ and $\{w_i^k\}$ are bounded by lemmas
\ref{lemIsProject} and \ref{lemBounded}. Note that $d_i^k\to 0$ as a
consequence of Lemma \ref{lemIsProject}.

Second, recall Lemma \ref{lemLBPhi}, which states that there exists
$M_1,M_2\geq 0$ such that for all $k\geq 2$,
\begin{align}
\varphi_{i,k-1}(z^k,w_i^k)
&\geq 
\varphi_{i,k-1}(z^{k-1},w_i^{k-1})
-
M_1\|w_i^{k-1}-w_i^k\|
\coap 
-M_2\|G_i\|\| z^k -  z^{k-1}\|.
\label{eqNotAffine}
\end{align}
Now let, for all $k\geq 1$,
\begin{align}
\label{rdef}
r_i^k \triangleq  
\varphi_{i,k}(z^k,w_i^k)
-
\frac{\rho_i^{k+1}}{2\myc{\alpha_i}}
\|y_i^k - w_i^k\|^2,
\end{align}
so that 
\begin{align}
\sum_{i=1}^n r_i^k
=
\varphi_{k}(p^k)
-
\sum_{i=1}^n
\frac{\rho_i^{k+1}}{2\myc{\alpha_i}}
\|y_i^k - w_i^k\|^2.\label{eqSumr}
\end{align}
Using \eqref{eqprevBound1} and \eqref{eqNotAffine} in \eqref{eqAscend} yields
\begin{align}
\label{eqAscending}
(\forall k\geq 2):\quad r^k_i
\geq 
(1-\myc{\alpha_i})r^{k-1}_i 
+
e^k_i
\end{align}
where 
\begin{align}
\label{defEik}
e^k_i \triangleq  
-
(1-\myc{\alpha_i})
\left(
\frac{\rho_i^k}{2\myc{\alpha_i}}
d_i^k + 
M_1\|w_i^{k-1}-w_i^k\|
+M_2\|G_i\|\| z^k -  z^{k-1}\|
\right).
\end{align}
 
Note that $\rho_i^k$ is bounded, $0<\myc{\alpha_i}\leq 1$,
$\|G_i\|$ is finite, $\|z^{k}-z^{k-1}\|\to 0$ and $\|w_i^k - w_i^{k-1}\|\to
0$ by Lemma \ref{lemIsProject}, and $d_i^k\to 0$. Thus $e_i^k\to 0$.

Since $0<\myc{\alpha_i}\leq 1$, we may apply Lemma
\ref{lemBoundBelow} to \eqref{eqAscending} with $\tau = 1-\myc{\alpha_i}<1$, which yields
$\liminf_{k\to\infty} \{r^k_i\}\geq 0$. 
Therefore
\begin{align}
\liminf_{k\to\infty} \sum_{i=1}^n r_i^k\geq 
\sum_{i=1}^n
\liminf_{k\to\infty}  r_i^k
\geq 0.
\label{eqsumrLow}
\end{align}
On the other hand, $\limsup_{k\to\infty} \varphi_k(p^k)\leq 0$ by Lemma
\ref{lemBoundedGrad}. Therefore, using \eqref{eqSumr} and \eqref{eqsumrLow},
\begin{align*}
0 
\leq 
\liminf_{k\to\infty} 
\sum_{i=1}^n r_i^k
&=
\liminf_{k\to\infty} 
\left\{
\varphi_k(p^k)
-
\sum_{i=1}^n
\frac{\rho_i^{k+1}}{2\myc{\alpha_i}}
\|y_i^k - w_i^k\|^2
\right\}
\\
&\leq 
\liminf_{k\to\infty} \varphi_k(p^k)
\leq 
\limsup_{k\to\infty} \varphi_k(p^k)
\leq 0.
\end{align*}
Therefore $\lim_{k\to\infty}\big\{\varphi_k(p^k)\big\} = 0$. Consider any
$i\in\{1,\ldots,n\}$. Combining $\lim_{k\to\infty}\big\{\varphi_k(p^k)\big\} =
0$ with $\liminf_{k\to\infty} \sum_{i=1}^n r_i^k\geq 0,$ we have
\[
\limsup_{k\to\infty}\big\{(\rho_i^{k+1}/\myc{\alpha_i})\|y_i^k-w_i^k\|^2\big\}\leq 0
\quad \Rightarrow \quad
\rho_i^{k+1}\|y_i^k-w_i^k\|^2\to 0.
\]
Since $\rho_i^k\geq\urho_i>0$ (using Lemma \ref{lemFinite} for
$i\in\mathcal{B}$)  we conclude that
$y_i^k-w_i^k\to 0$.\ourqed
\end{proof}

%\subsection{Finishing the Proof}\label{secFinish}
We have already proved the first requirement of Lemma \ref{lemStrat}, that $y_i^k -
w_i^k\to 0$ for all $i\in\{1,\ldots,n\}$. We now work to establish the second
requirement, that $G_i z^k - x_i^k\to 0$. In the upcoming lemmas we continue
to use the quantity $\hat{y}_i^k$ which is given in Definition \ref{defYhat}.

\begin{lemma}
\label{lemYhat}
\preprintversion{Recall $\{\hat{y}_i^k\}_{k\in\nN}$ from Definition \ref{defYhat}.}
For all $i=1,\ldots,n$, $\hat{y}_i^k - w_i^k\to 0$.
\end{lemma}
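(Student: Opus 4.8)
The plan is to extract the quantity $\|\hat{y}_i^k - w_i^k\|^2$ directly from the \emph{stronger} ascent inequality \eqref{eqAprelim} of Lemma~\ref{lemAscent1}, which carries this term explicitly; it was discarded in passing to the weaker form \eqref{eqAscend} that drove Lemma~\ref{lemFirstBiggy}. Fixing $i$ and rearranging \eqref{eqAprelim} to isolate $\tfrac{\rho_i^k}{2}\|\hat{y}_i^k - w_i^k\|^2$ on the left yields the upper bound
\[
\frac{\rho_i^k}{2}\|\hat{y}_i^k - w_i^k\|^2
\leq
\varphi_{i,k}(z^k,w_i^k)
- \frac{\rho_i^k}{2\alpha_i}\|y_i^k - w_i^k\|^2
- (1-\alpha_i)\varphi_{i,k-1}(z^k,w_i^k)
+ \frac{(1-\alpha_i)\rho_i^k}{2\alpha_i}\|y_i^{k-1} - w_i^k\|^2 .
\]
The proof then reduces to showing that the right-hand side has nonpositive limit superior: since the left-hand side is nonnegative and $\rho_i^k \geq \urho_i > 0$ by Lemma~\ref{lemFinite}, this forces $\tfrac{\rho_i^k}{2}\|\hat{y}_i^k - w_i^k\|^2 \to 0$ and hence $\hat{y}_i^k - w_i^k \to 0$.

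The first ingredient I would establish is that $\varphi_{i,k}(z^k,w_i^k)\to 0$ for \emph{each individual} $i$, which Lemma~\ref{lemFirstBiggy} does not quite record (it gives only $\varphi_k(p^k)\to 0$ and $\|y_i^k - w_i^k\|\to 0$). From \eqref{eqAscending} and Lemma~\ref{lemBoundBelow} one has $\liminf_k r_i^k \geq 0$ for every $i$, where $r_i^k$ is as in \eqref{rdef}; meanwhile $\sum_i r_i^k \to 0$ because $\varphi_k(p^k)\to 0$ and each $\|y_i^k - w_i^k\|\to 0$, and each $r_i^k$ is bounded above since the iterates are bounded. These three facts together force each $r_i^k \to 0$, whence $\varphi_{i,k}(z^k,w_i^k) = r_i^k + \tfrac{\rho_i^{k+1}}{2\alpha_i}\|y_i^k - w_i^k\|^2 \to 0$.

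With that in hand I would bound the remaining terms of the displayed right-hand side. The term $\tfrac{\rho_i^k}{2\alpha_i}\|y_i^k - w_i^k\|^2\to 0$ since $\|y_i^k - w_i^k\|\to 0$ and $\rho_i^k$ is bounded. For $-(1-\alpha_i)\varphi_{i,k-1}(z^k,w_i^k)$ I would apply Lemma~\ref{lemLBPhi} to lower-bound $\varphi_{i,k-1}(z^k,w_i^k)$ by $\varphi_{i,k-1}(z^{k-1},w_i^{k-1})$ minus terms proportional to $\|w_i^k-w_i^{k-1}\|$ and $\|z^k-z^{k-1}\|$, both of which vanish by Lemma~\ref{lemIsProject}; since the shifted quantity $\varphi_{i,k-1}(z^{k-1},w_i^{k-1})\to 0$ by the previous paragraph, this gives $\limsup_k\big[-(1-\alpha_i)\varphi_{i,k-1}(z^k,w_i^k)\big]\leq 0$. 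Finally $\|y_i^{k-1}-w_i^k\|^2\to 0$ by \eqref{eqprevBound1} combined with $y_i^{k-1}-w_i^{k-1}\to 0$, so the last term also vanishes. Taking $\limsup$ throughout yields the claim.

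I expect the main obstacle to be the block-wise upgrade $\liminf_k r_i^k\geq 0 \Rightarrow \lim_k r_i^k = 0$: this is the one genuinely nonroutine point, and it must be argued carefully from the nonnegativity of each block's liminf together with the vanishing of the sum (a subsequence argument showing that a positive $\limsup r_i^k$ would push $\liminf \sum_{j\neq i} r_j^k$ strictly below $0$). As a sanity check on what the lemma asserts, I would note the identity $\hat{y}_i^k - y_i^k = B_i x_i^{k-1} - B_i x_i^k$, immediate from Definition~\ref{defYhat} and $y_i^k = a_i^k + B_i x_i^k$, so that $\hat{y}_i^k - w_i^k \to 0$ is \emph{equivalent} to $B_i x_i^{k-1} - B_i x_i^k \to 0$; however, since $x_i^k - x_i^{k-1}\to 0$ is not yet available at this stage, I would still obtain the result through the estimate coming from \eqref{eqAprelim} rather than through continuity of $B_i$.
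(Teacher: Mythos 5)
Your proposal is correct, and it runs on the same engine as the paper's proof: both start from the stronger ascent inequality \eqref{eqAprelim} (precisely because it retains the $\|\hat{y}_i^k-w_i^k\|^2$ term that \eqref{eqAscend} discards), both control the cross terms $\varphi_{i,k-1}(z^k,w_i^k)$ and $\|y_i^{k-1}-w_i^k\|^2$ via Lemma~\ref{lemLBPhi} and \eqref{eqprevBound1}, both reuse $\varphi_k(p^k)\to 0$, $y_i^k-w_i^k\to 0$, and $\liminf_k r_i^k\geq 0$ from the proof of Lemma~\ref{lemFirstBiggy}, and both finish with the stepsize lower bound $\rho_i^k\geq\urho_i>0$ from Lemma~\ref{lemFinite}. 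Where you genuinely diverge is in how the per-block conclusion is extracted. The paper never establishes per-block limits: it bounds each block below as $\varphi_{i,k}(z^k,w_i^k)\geq(1-\alpha_i)r_i^{k-1}+\tfrac{\rho_i^k}{2}\|\hat{y}_i^k-w_i^k\|^2+e_i^k$, sums over $i$ so that the left side becomes $\varphi_k(p^k)$, and then lets the nonnegativity of each summand $\tfrac{\rho_i^k}{2}\|\hat{y}_i^k-w_i^k\|^2$, together with $\liminf_k r_i^{k-1}\geq 0$ and $e_i^k\to 0$, force every summand to vanish in a single limsup. You instead first upgrade to the strictly stronger intermediate statement that $r_i^k\to 0$ and hence $\varphi_{i,k}(z^k,w_i^k)\to 0$ for each \emph{individual} $i$ --- your subsequence argument (a positive limsup of one block would drive the liminf of the sum of the remaining blocks below zero, contradicting their nonnegative liminfs) is valid, and boundedness is not even needed for it --- and only then take limsups in the rearranged \eqref{eqAprelim} block by block. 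Your route costs one extra nonroutine step but yields the per-block limit $\varphi_{i,k}(z^k,w_i^k)\to 0$, which the paper never records and which is of some independent interest; the paper's summation trick buys economy by sidestepping the blockwise upgrade entirely. Your closing observation that $\hat{y}_i^k-y_i^k=B_ix_i^{k-1}-B_ix_i^k$, and your decision not to argue through continuity of $B_i$ (since $x_i^k-x_i^{k-1}\to 0$ is only proved \emph{after} this lemma, in Lemma~\ref{lemFirstX}, using it), correctly avoids a circularity that would otherwise be tempting.
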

\begin{proof}		
	Fix any $k\geq 1$. For all $i= 1,\ldots,n$, repeating
	\eqref{eqAprelim} from Lemma \ref{lemAscent1}, we have
	\begin{align}
	\varphi_{i,k}(z^k,w_i^k)
	&\geq 
	(1-\myc{\alpha_i})
	\left(
	\varphi_{i,k-1}(z^k,w_i^k)
	-
	\frac{\rho_i^k}{2\myc{\alpha_i}}
	\|y_i^{k-1} - w_i^k\|^2
	\right)
	\nonumber\\&\qquad 
	+
	\frac{\rho_i^k}{2\myc{\alpha_i}}\left(
	\|y_i^k - w_i^k\|^2+\myc{\alpha_i}\|\hat{y}_i^k - w_i^k\|^2
	\right)
	\nonumber\\\nonumber 
	&\geq 
	(1-\myc{\alpha_i})r_i^{k-1} + 
		\frac{\rho_i^k}{2}
	\|\hat{y}_i^k - w_i^k\|^2
	+
	e_i^k
	\end{align}
	where we have used $r_i^k$ defined \eqref{rdef} along with
	\eqref{eqprevBound1}--\eqref{eqNotAffine} and $e_i^k$ is defined in
	\eqref{defEik}. This is the same argument used in Lemma
	\ref{lemFirstBiggy}, but now we apply
	\eqref{eqprevBound1}--\eqref{eqNotAffine} to \eqref{eqAprelim}, rather
	than \eqref{eqAscend}, so that we can upper bound the $\|\hat{y}_i^k -
	w_i^k\|^2$ term. Summing over
	$i=1,\ldots,n$, yields
	\begin{align*}
	\varphi_k(p^k) = \sum_{i=1}^n\varphi_{i,k}(z^k,w_i^k)
	\geq 
	\sum_{i=1}^n 
	(1-\myc{\alpha_i}) r_i^{k-1} 
	+\sum_{i=1}^n 
	\frac{\rho_i^k}{2}\|\hat{y}_i^k - w_i^k\|^2
	+
	\sum_{i=1}^n 
	e_i^k.
	\end{align*}
	Since $\varphi_k(p^k)\to 0$, $e_i^k\to 0$, $\liminf_{k\to\infty}
	\{r_i^k\}\geq 0$, and $\rho_i^k\geq\urho_i>0$ for all $k$, the above inequality implies that $\hat{y}_i^k - w_i^k\to
	0$.\ourqed
\end{proof}

\myc{\begin{lemma}\label{lemFirstX}
For $i=1\ldots,n$, $x_i^k - x_i^{k-1}\to 0$.
\end{lemma}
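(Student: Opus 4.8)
The plan is to convert the resolvent update into an explicit recursion for the successive differences $x_i^k-x_i^{k-1}$ and then invoke the elementary sequence result of Lemma~\ref{lemSeqConverge}. First I would eliminate the subgradient term $a_i^k$ from the update. Recall from Definition~\ref{defYhat} that there is some $a_i^k\in A_i x_i^k$ with $x_i^k+\rho_i^k a_i^k = (1-\alpha_i)x_i^{k-1}+\alpha_i G_i z^k-\rho_i^k(B_i x_i^{k-1}-w_i^k)$, and that $\hat{y}_i^k \triangleq a_i^k+B_i x_i^{k-1}$. Substituting $\rho_i^k a_i^k = \rho_i^k\hat{y}_i^k-\rho_i^k B_i x_i^{k-1}$ into the update cancels the $\rho_i^k B_i x_i^{k-1}$ contributions and leaves the clean identity
\begin{align*}
x_i^k = (1-\alpha_i)x_i^{k-1}+\alpha_i G_i z^k-\rho_i^k\big(\hat{y}_i^k-w_i^k\big),
\end{align*}
which is valid for every $k\geq 1$ and every $i=1,\ldots,n$.

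Next I would subtract the instance of this identity at $k-1$ from its instance at $k$. Setting $\delta_i^k\triangleq x_i^k-x_i^{k-1}$, this produces, for all $k\geq 2$, the recursion
\begin{align*}
\delta_i^k = (1-\alpha_i)\,\delta_i^{k-1}+h_i^k,\qquad
h_i^k\triangleq \alpha_i G_i(z^k-z^{k-1})-\rho_i^k(\hat{y}_i^k-w_i^k)+\rho_i^{k-1}(\hat{y}_i^{k-1}-w_i^{k-1}).
\end{align*}
The contraction factor is exactly $1-\alpha_i$, which is the key feature I want.

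It then remains to show $h_i^k\to 0$ and to apply the recursion. Each of the three terms comprising $h_i^k$ vanishes in the limit: $z^k-z^{k-1}\to 0$ by Lemma~\ref{lemIsProject} while $\|G_i\|$ is finite; $\hat{y}_i^k-w_i^k\to 0$ and $\hat{y}_i^{k-1}-w_i^{k-1}\to 0$ by Lemma~\ref{lemYhat}; and the stepsizes satisfy $\rho_i^k\leq\hat{\rho}$ by Lemma~\ref{lemFinite}. Hence $\|h_i^k\|\to 0$. Taking norms in the recursion gives $\|\delta_i^k\|\leq (1-\alpha_i)\|\delta_i^{k-1}\|+\|h_i^k\|$, and since $0\leq 1-\alpha_i<1$ for every admissible $\alpha_i\in(0,1]$, a routine index shift lets me apply Lemma~\ref{lemSeqConverge} with $\tau=1-\alpha_i$ to conclude $\|\delta_i^k\|\to 0$, that is $x_i^k-x_i^{k-1}\to 0$. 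The boundary value $\tau=0$ (arising when $\alpha_i=1$, which is permitted only if $L_i=0$) is harmlessly covered by that lemma, so no separate case analysis is needed.

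The only real obstacle I anticipate is obtaining the clean identity in the first paragraph: one must use the definition of $\hat{y}_i^k$ precisely so that the $B_i x_i^{k-1}$ terms cancel and the recursion inherits the contraction factor $1-\alpha_i<1$. Everything downstream---verifying $h_i^k\to 0$ and invoking the sequence lemma---is routine given the previously established facts $z^k-z^{k-1}\to 0$ and $\hat{y}_i^k-w_i^k\to 0$ together with the boundedness of the stepsizes.
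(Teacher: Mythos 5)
Your proposal is correct and follows essentially the same route as the paper's own proof: you derive the identity $x_i^k = (1-\alpha_i)x_i^{k-1}+\alpha_i G_i z^k-\rho_i^k(\hat{y}_i^k-w_i^k)$ (the paper's equation preceding the subtraction step), subtract consecutive instances, bound the resulting error term using Lemma~\ref{lemIsProject}, Lemma~\ref{lemYhat}, and the boundedness of the stepsizes, and then apply Lemma~\ref{lemSeqConverge} with $\tau=1-\alpha_i$. The only cosmetic difference is that you explicitly remark on the harmless boundary case $\alpha_i=1$, which the paper leaves implicit.
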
 
}
\begin{proof}
	 Fix $i\in\{1,\ldots,n\}$. Using the definition of $a_i^k$ 
	 in Definition \ref{defYhat}, we have for $k\geq 1$ that
	\begin{align}\nonumber 
	x_i^k + \rho_i^k a_i^k 
	   &= (1-\myc{\alpha_i})x_i^{k-1}+\myc{\alpha_i} G_i z^k - \rho_i^k(B_i x_i^{k-1}-w_i^k).
	\end{align}
	Using the definition of  $\hat{y}_i^k$, also in Definition \ref{defYhat},
	 this implies that 
	\begin{align}\label{eqUseYhat} 
&(\forall k\geq 1):&\quad x_i^k &= (1-\myc{\alpha_i})x_i^{k-1}+\myc{\alpha_i} G_i z^k - \rho_i^k(\hat{y}_i^k-w_i^k),
	\\\nonumber 
&(\forall k\geq 2):&\quad	x_i^{k-1} &= (1-\myc{\alpha_i})x_i^{k-2}+\myc{\alpha_i} G_i z^{k-1} - \rho_i^{k-1}(\hat{y}_i^{k-1}-w_i^{k-1}).
	\end{align}
	Subtracting the second of these equations from the first 
	yields, for all $k\geq 2$,
	\myc{
	\begin{align}
	x_i^k - x_i^{k-1}	
	&=
	(1-\alpha_i)(x_i^{k-1}-x_i^{k-2})
	+\alpha_i (G_i z^k -G_i z^{k-1})
	- \rho_i^k(\hat{y}_i^k-w_i^k)
	\nonumber \\
	&\qquad + \rho_i^{k-1}(\hat{y}_i^{k-1}-w_i^{k-1})	
	\nonumber 
	\end{align}
Taking norms and using the triangle inequality yields, for all $k\geq 2$, that
\begin{align}
\|x_i^k - x_i^{k-1}\|
&\leq 
\left(1-\myc{\alpha_i}\right)
\|
x_i^{k-1}- x_i^{k-2}
\|
+\tilde{e}_i^k
\label{eqUse}
\end{align}
where
$$
\tilde{e}_i^k = 
\|G_i\|\,\|z_i^k - z_i^{k-1}\|
+\rho_i^k\|\hat{y}_i^k-w_i^k\|
+
\rho_i^{k-1}
\|\hat{y}_i^{k-1}-w_i^{k-1}\|
$$	
Since $\rho_i^{k}$ is bounded from above, $\tilde{e}_i^k\to 0$ using Lemma \ref{lemYhat}, the finiteness
of $\|G_i\|$, and Lemma~\ref{lemIsProject}.
Furthermore, $\myc{\alpha_i}>0$, so we may apply
Lemma \ref{lemSeqConverge} to \eqref{eqUse} to conclude that $x_i^k -
x_i^{k-1}\to 0$.}\ourqed
\end{proof}

\begin{lemma}\label{lemSecondX}
	For $i=1,\dots,n$,
$G_i z^k - x_i^k\to 0$.
\end{lemma}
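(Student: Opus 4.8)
The plan is to read off the result directly from the recursion \eqref{eqUseYhat} established in the proof of Lemma~\ref{lemFirstX}, which expresses $x_i^k$ as
\begin{align*}
x_i^k = (1-\alpha_i)x_i^{k-1}+\alpha_i G_i z^k - \rho_i^k(\hat{y}_i^k-w_i^k).
\end{align*}
Fixing $i\in\{1,\ldots,n\}$, I would isolate the term $\alpha_i G_i z^k$ and then subtract $\alpha_i x_i^k$ from both sides. A one-line rearrangement (grouping the two $x_i^k$ contributions into the coefficient $1-\alpha_i$) produces the identity
\begin{align*}
\alpha_i\bigl(G_i z^k - x_i^k\bigr)
= (1-\alpha_i)\bigl(x_i^k - x_i^{k-1}\bigr) + \rho_i^k\bigl(\hat{y}_i^k - w_i^k\bigr),
\end{align*}
valid for all $k\geq 1$.

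The remaining step is to argue that the right-hand side vanishes as $k\to\infty$. The first summand tends to $0$ by Lemma~\ref{lemFirstX}. For the second, $\hat{y}_i^k - w_i^k\to 0$ by Lemma~\ref{lemYhat}, while $\rho_i^k$ is bounded above (by $\hat{\rho}$, via Lemma~\ref{lemFinite}), so the product converges to $0$. Since $\alpha_i>0$, dividing through by $\alpha_i$ then yields $G_i z^k - x_i^k\to 0$, which is exactly the claim.

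The main obstacle for this lemma has effectively already been removed upstream: all of the genuine analytic work sits in Lemmas~\ref{lemFirstX} and~\ref{lemYhat}, so once those are in hand the present statement is a purely algebraic consequence of \eqref{eqUseYhat}. The only point that requires any care is ensuring that the coefficient $\rho_i^k$ multiplying the vanishing quantity $\hat{y}_i^k - w_i^k$ remains bounded, so that a factor tending to zero is not amplified; this is guaranteed by the uniform upper bound $\rho_i^k\leq\hat{\rho}$ supplied by Lemma~\ref{lemFinite}, and the strict positivity $\alpha_i>0$ is what finally lets us pass from $\alpha_i(G_i z^k - x_i^k)\to 0$ to $G_i z^k - x_i^k\to 0$.
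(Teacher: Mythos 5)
Your proof is correct and follows essentially the same route as the paper: the identity $\alpha_i(G_i z^k - x_i^k) = (1-\alpha_i)(x_i^k - x_i^{k-1}) + \rho_i^k(\hat{y}_i^k - w_i^k)$ is exactly the paper's display \eqref{eqAnAnalog}, and the conclusion is drawn the same way, via Lemma~\ref{lemFirstX}, Lemma~\ref{lemYhat} with boundedness of $\{\rho_i^k\}$, and $\alpha_i>0$. Your added remark pinning the stepsize bound to $\hat{\rho}$ via Lemma~\ref{lemFinite} is a correct sharpening of the paper's unattributed ``$\{\rho_i^k\}$ is bounded.''
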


\begin{proof}
Recalling \eqref{eqUseYhat}, we first write
\begin{align}
x_i^k 
&=
(1-\myc{\alpha_i})x_i^{k-1}+\myc{\alpha_i} G_i z^k -\rho_i^k(\hat{y}_i^k - w_i^k) \nonumber \\
\Leftrightarrow \qquad
\myc{\alpha_i}\left(G_i z^k - x_i^k\right)
&=
(1-\myc{\alpha_i})(x_i^k - x_i^{k-1})
+
\rho_i^k(\hat{y}_i^k-w_i^k).
\label{eqAnAnalog}
\end{align}
Lemma~\ref{lemFirstX} implies that the first term
on the right-hand side of~\eqref{eqAnAnalog} converges to zero.  Since
$\{\rho_i^k\}$ is bounded, Lemma~\ref{lemYhat} implies that the second term
on the right-hand side also converges to zero.  \myc{Since $\myc{\alpha_i}>0$,
we conclude that} $\|G_i z^k-x_i^k\|\to 0$. \ourqed
\end{proof}
\noindent Finally, we can state our convergence result for 
Algorithm~\ref{AlgfullWithBT}: 
\begin{theorem}\label{thmMain}
	Suppose that assumptions \ref{AssMonoProb}-\ref{assStep} hold. 
	If Algorithm~\ref{AlgfullWithBT} terminates by reaching line~\ref{lineReturn}, 
	then its 
	final iterate is a member of the extended solution set $\calS$.
	Otherwise, the
	sequence $\{(z^k,\bw^k)\}$ generated by Algorithm~\ref{AlgfullWithBT}
	converges weakly to some point $(\bar z,\overline{\bw})$ in the extended
	solution set $\calS$ of~\eqref{prob1} defined
	in~\eqref{defCompExtSol}. Furthermore, $x_i^k\rightharpoonup G_i \bar z$
	and $y_i^k\rightharpoonup \overline{w}_i$ for all $i=1,\ldots,n-1$,
	$x_{n}^k\rightharpoonup \bar z$, and $y_n^k\rightharpoonup
	-\sum_{i=1}^{n-1}G_i^* \overline{w}_i$. 
\end{theorem}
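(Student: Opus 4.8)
The plan is to reduce the theorem entirely to the lemmas already proved, treating the two cases of the statement separately and performing essentially no new analysis.

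For the finite-termination case, I would argue directly from the projection step. Termination at line~\ref{lineReturn} of Algorithm~\ref{AlgfullWithBT} happens exactly when \texttt{projectToHplane} returns $\pi_k = 0$. By line~\ref{linePiUpdate} of Algorithm~\ref{AlgProjUpdate}, $\pi_k = \|u\|^2 + \gamma^{-1}\|v\|^2 = 0$ forces $u_i = x_i^k - G_i x_n^k = 0$ for all $i = 1,\ldots,n-1$ and $v = \sum_{i=1}^{n-1} G_i^* y_i^k + y_n^k = 0$; moreover, in this case the function returns the tuple $(z^{k+1},\bw^{k+1}) = (x_n^k, y_1^k,\ldots,y_{n-1}^k)$. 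The only additional fact I need is that $y_i^k \in T_i x_i^k = A_i x_i^k + B_i x_i^k$ for every $i=1,\ldots,n$ and every $k\geq 1$, which was established within the proof of Lemma~\ref{lemFinite}. Combining $x_i^k = G_i x_n^k = G_i z^{k+1}$ with $y_i^k \in T_i x_i^k$ gives $w_i^{k+1} = y_i^k \in T_i G_i z^{k+1}$ for $i=1,\ldots,n-1$, while $v=0$ together with $y_n^k \in T_n x_n^k = T_n z^{k+1}$ (recall $G_n = I$) gives $-\sum_{i=1}^{n-1} G_i^* w_i^{k+1} = y_n^k \in T_n z^{k+1}$. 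These are precisely the two defining conditions of $\calS$ in~\eqref{defCompExtSol}, so the final iterate lies in $\calS$.

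For the non-terminating case, almost all the work is already complete. I would simply invoke Lemma~\ref{lemFirstBiggy}, which yields $y_i^k - w_i^k \to 0$ for all $i$, together with Lemma~\ref{lemSecondX}, which yields $G_i z^k - x_i^k \to 0$ for all $i$. These two conclusions are exactly condition~\eqref{eqAlltheMarbles}, the hypothesis required by Lemma~\ref{lemStrat}. Applying Lemma~\ref{lemStrat} then produces a point $(\bar z,\overline{\bw}) \in \calS$ with $(z^k,\bw^k) \rightharpoonup (\bar z,\overline{\bw})$, along with the claimed weak convergences $x_i^k \rightharpoonup G_i \bar z$, $y_i^k \rightharpoonup \overline{w}_i$ for $i=1,\ldots,n-1$, $x_n^k \rightharpoonup \bar z$, and $y_n^k \rightharpoonup -\sum_{i=1}^{n-1} G_i^* \overline{w}_i$, finishing this case.

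I do not anticipate a genuine obstacle: the substantive content lives in the ascent lemma (Lemma~\ref{lemAscent1}) and its downstream consequences (lemmas~\ref{lemFirstBiggy}--\ref{lemSecondX}), all proved earlier, so the theorem is really an assembly statement. The one point that requires care is the bookkeeping in the finite-termination case, namely reading off correctly from Algorithm~\ref{AlgProjUpdate} what $\pi_k = 0$ entails about the pairs $(x_i^k,y_i^k)$ and which tuple is returned, and remembering to use the graph membership $y_i^k \in T_i x_i^k$ --- the sole property of the operator-processing step needed there.
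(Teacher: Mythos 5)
Your proposal is correct, and for the main (non-terminating) case it is exactly the paper's argument: Lemma~\ref{lemFirstBiggy} and Lemma~\ref{lemSecondX} together give condition~\eqref{eqAlltheMarbles}, and Lemma~\ref{lemStrat} then delivers all of the claimed weak-convergence statements. The only place you diverge is the finite-termination case: the paper disposes of it by citing Lemma~5 of \cite{johnstone2018projective}, whereas you verify it directly from Algorithm~\ref{AlgProjUpdate} --- reading off that $\pi_k=0$ forces $x_i^k = G_i x_n^k$ for $i=1,\ldots,n-1$ and $\sum_{i=1}^{n-1}G_i^* y_i^k + y_n^k = 0$, that the returned tuple is $(x_n^k, y_1^k,\ldots,y_{n-1}^k)$, and then combining these with the graph membership $y_i^k \in T_i x_i^k$ (established inductively in the proof of Lemma~\ref{lemFinite}) to match the two defining conditions of $\calS$ in~\eqref{defCompExtSol}. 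Your direct verification is sound and makes the theorem self-contained within this paper, at the cost of a few lines; the paper's citation is shorter but leans on the companion paper. Either route is acceptable, and your bookkeeping (in particular the identification $w_i^{k+1}=y_i^k$ and the use of $G_n=I$ so that $y_n^k \in T_n z^{k+1}$) is exactly right.
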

\begin{proof}
	For the finite termination result we refer to Lemma 5 of \cite{johnstone2018projective}.
	Otherwise, lemmas~\ref{lemFirstBiggy} and~\ref{lemSecondX} imply that 
	the hypotheses of Lemma~\ref{lemStrat}, hold, and the result follows.\ourqed
\end{proof}

\newcommand{\onef}{\texttt{ps1fbt}}
\newcommand{\twof}{\texttt{ps2fbt}}
\newcommand{\ada}{\texttt{ada3op}}
\newcommand{\stoch}{\texttt{stoc3op}}
\newcommand{\cp}{\texttt{cp-bt}}
\newcommand{\tseng}{\texttt{tseng-pd}}
\newcommand{\frb}{\texttt{frb-pd}}

\section{Numerical Experiments}\label{secNum}
%\subsection{Methods Tested}
All our numerical experiments were implemented in Python (using
\texttt{numpy} and \texttt{scipy}) on an Intel Xeon workstation running Linux with 16 cores and 64 GB of RAM. \myc{The code is available via github at \url{https://github.com/projective-splitting/coco}}. 
\preprintversion{We restricted our
attention to algorithms with comparable features and benefits to
our proposed method.  Thus we only considered methods that:
\begin{enumerate}
\item \label{algprop:1fullsplit} Are first-order and ``fully split" the
      problem (that is, separate the linear operators $G_i$ from the resolvent
      calculations, and use gradient-type steps for smooth functions),
\item Do not (either approximately or exactly) solve a linear system of equations
      at each iteration or before the first iteration,
\item Avoid having to apply ``smoothing" to nonsmooth operators,
\item \label{algprop:backtrack} Incorporate a backtracking linesearch in a
      manner that avoids the need for bounds on Lipschitz or cocoercivity
      constants, and
\item \label{algprop:exactJ} Do not use iterative approximation of
      resolvents.
\end{enumerate}
The last property we include for reasons of simplicity, while the rest
contribute to making algorithms scalable and easy to apply. For a given
application, there may of course be effective algorithms which could have been
considered but do not satisfy all of the above requirements.  However, because
of the general desirability of properties
\ref{algprop:1fullsplit}-\ref{algprop:backtrack} and the relative simplicity
of algorithms with property~\ref{algprop:exactJ}, we only considered methods
having all of them.

}
We compared this paper's backtracking one-forward-step projective splitting
algorithm given in Algorithm~\ref{AlgfullWithBT} (which we call
\texttt{ps1fbt}) with the following \preprintversion{methods:}
\coapversion{methods, selected for their similarities in features (especially
the ability to ``fully split'' problems and having deterministic convergence 
guarantees), applicability, and implementation
effort:}
\begin{itemize}
	\item The two-forward-step projective splitting algorithm with
	backtracking we developed in \cite{johnstone2018projective}
	(\texttt{ps2fbt}).  This method requires only Lipschitz continuity of
	single-valued operators, as opposed to cocoercivity.
	\item The adaptive three-operator splitting algorithm of
	\cite{pedregosa2018adaptive} (\texttt{ada3op}) (where ``adaptive" is used
	to mean ``backtracking linesearch"); this method is a backtracking
	adaptation of the fixed-stepsize method proposed in~\cite{davis2015three}.
	This method requires $G_i = I$ in problem~\eqref{prob1} and hence can only
	be readily applied to two of the three test applications described below.
	\item The backtracking linesearch variant of the Chambolle-Pock primal-dual
	splitting method \cite{malitsky2018first} (\cp).
	\item The algorithm of \cite{combettes2012primal}. This is essentially Tseng's method applied to a product-space
	``monotone + skew" inclusion in the following way: Assume $T_n$ is Lipschitz monotone, problem
	\eqref{prob2} is equivalent to finding $p\triangleq
	(z,w_1,\ldots,w_{n-1})$ such that $w_i\in T_i G_i z$ (which is equivalent
	to $G_iz\in T_i^{-1}w_i$) for $i=1,\ldots,n-1$, and $\sum_{i=1}^{n-1}G_i^* w_i=-T_n z$. In other words, we wish to solve $0\in \tilde{A}p +
	\tilde{B} p$, where $\tilde{A}$ and $\tilde{B}$ are defined by
\begin{align}\label{eqMonPlSk1}
\tilde{A}p &= \{0\} \times T_1^{-1}w_1 \times \cdots \times T_{n-1}^{-1}w_{n-1}
\\ \label{eqMonPlSk2}
\tilde{B}p &=
\left[
\begin{array}{c}
T_n z\\
0\\
\vdots\\
0
\end{array}
\right]
+
\left[
\begin{array}{ccccc}
0&G_1^*&G_2^*&\hdots &G_{n-1}^*\\
-G_1&0& \hdots & \hdots & 0\\
\vdots &\vdots & \ddots & \ddots& \vdots\\
-G_{n-1} & 0 & \hdots & \hdots & 0
\end{array}
\right]
\left[
\begin{array}{c}
z\\
w_1\\
\vdots\\
w_{n-1}
\end{array}
\right].
\end{align}
$\tilde{A}$ is maximal monotone, while $\tilde{B}$ is the sum of two
Lipshitz monotone operators (the second being skew linear), and therefore also
Lipschitz monotone. The algorithm in \cite{combettes2012primal} is
essentially Tseng's forward-backward-forward method~\cite{tseng2000modified}
applied to this inclusion, using resolvent steps for $\tilde{A}$ and forward
steps for $\tilde{B}$. Thus, we call this method \tseng. In order to achieve
good performance with \tseng\space we had to incorporate a diagonal
preconditioner as proposed in \cite{vu2013variable}.
\item The recently proposed forward-reflected-backward
method~\cite{tam2018forward}, applied to this same primal-dual inclusion $0\in
\tilde{A}p + \tilde{B} p$ specified by
\eqref{eqMonPlSk1}-\eqref{eqMonPlSk2}.  We call this method \frb. 
	%Since this is also a
%	primal-dual method, we call it \frb. As with \tseng, we had to use a
%	preconditioner to improve the performance of .
\end{itemize}

\preprintversion{Recently there have been several stochastic 
extensions of \ada\space and
\cp\space \cite{yurtsever2016stochastic,zhao2018stochastic,pedregosa2019proximal}. The
method of \cite{zhao2018stochastic} requires estimates of the Lipschitz
constants and matrix norms, and so does not satisfy our experimental
requirements. Since one of our problems is not in ``finite-sum" format, and
another includes a matrix $G_i$ which is not equal to the identity, the
methods of \cite{yurtsever2016stochastic,pedregosa2019proximal} could only be
applied to one of our three test problems. Even for this problem, the number
of training examples in the two datasets were $60$ and $127$, respectively,
while the feature dimensions were $7,\!705$ and $19,\!806$, so finite-sum
methods are not particularly suitable. For these reasons we did not include
these methods in our experiments.}

%Note that all these methods satisfy the above requirements. 
% The main hypothesis to test is whether \onef \space outperforms \twof, but we are also concerned with how \onef\space compares with \ada, \cp, and \tseng, as these are more established first-order splitting methods. 

\subsection{Portfolio Selection}\label{secPort}
Consider the optimization problem:
\begin{align} \label{probPort}
\min_{x\in\rR^d}
F(x)\triangleq 
x^\top Q x
\quad \text{s.t.}\quad 
m^\top x \geq r,\quad 
\sum_{i=1}^d x_i = 1,x_i\geq 0,
\end{align}
where $Q\succeq 0$, $r>0$, and $m\in\rR^d_+$. This model arises
in Markowitz portfolio theory. We chose this particular problem because it
features two constraint sets (a general halfspace and a simplex) onto which it
is easy to project individually, but whose intersection poses a more difficult
projection problem.  This property makes it difficult to apply first-order
methods such as ISTA/FISTA \cite{beck2009fast} as they can only perform one
projection per iteration and thus cannot fully split the problem. On the other
hand, projective splitting can handle an arbitrary number of constraint sets
so long as one can compute projections onto each of them.  We consider a
fairly large instance of this problem so that standard interior point methods
(for example, those in the CVXPY~\cite{cvxpy} package) are disadvantaged by
their high per-iteration complexity and thus not generally competitive with
first-order methods. Furthermore, backtracking variants of first-order methods are
preferable for large problems as they avoid the need to estimate the largest
eigenvalue of $Q$.

To convert \eqref{probPort} to a monotone inclusion, we set $A_1 = N_{C_1}$
where $N_{C_1}$ is the normal cone of the simplex $C_1 =
\{x\in\rR^d:\sum_{i=1}^d x_i=0,x_i\geq 0\}$. We set $B_1 = 2Qx$, which is the
gradient of the objective function and is cocoercive (and
Lipschitz-continuous). Finally, we set $A_2 = N_{C_2}$, where $C_2 =
\{x:m^\top x\geq r\}$, and let $B_2$ be the zero operator. Note that the
resolvents of $N_{C_1}$ and $N_{C_2}$ (that is, the projections onto $C_1$ and
$C_2$) are easily computed in $\bigO(d)$ operations~\cite{michelot1986finite}.
With this notation, one may write~\eqref{probPort} as the the problem of
finding $z\in\rR^d$ such that
\begin{align}\nonumber 
0\in A_1 z + B_1 z + A_2 z,
\end{align} 
which is an instance of \eqref{prob1} with $n=2$ and $G_1=G_2=I$. 

To terminate each method in our comparisons, we used the following common
criterion incorporating both the objective function and the constraints
of~\eqref{probPort}:
\begin{align}
c(x) &\triangleq 
    \max\left\{\frac{F(x)-F^*}{F^*},0\right\}
    - \min\{m^\top x-r,0\}
    + \left|\sum_{i=1}^d x_i - 1\right|
    \coap 
    - \max\{0,\min_i x_i\},
    \label{eq_ce}
\end{align}
where $F^*$ is the optimal value of the problem. Note that $c(x)=0$ if and
only if $x$ solves \eqref{probPort}. To estimate $F^*$, we used the best feasible
value returned by any method after $1000$ iterations.  

%The constraint qualification s.t.~\eqref{propPort} and \eqref{probPortMono} are equivalent is merely that $\text{ri } C_1\cap\text{ri } C_2$ is nonempty \cite[Cor.~27.6(b)]{bauschke2011convex} which is guaranteed by how we choose $r$.% We can now apply our three methods to solving \eqref{probPortMono}. 

We generated random instances of \eqref{probPort} as follows:  we
set $d=10,000$ to obtain a relatively large instance of the problem. We then
generated a $d\times d$ matrix $Q_0$ with each entry drawn from $\calN(0,1)$.
The matrix $Q$ is then formed as $(1/d)\cdot Q_0 Q_0^\top$, which is
guaranteed to be positive semidefinite. We then generate the vector
$m\in\rR^d$ of length $d$ to have entries uniformly distributed between $0$
and $100$. The constant $r$ is set to $\delta_r\sum_{i=1}^d m_i/d$ for various
values of $\delta_r>0$. We solved the problem for
$\delta_r\in\{0.5,0.8,1,1.5\}$.
%The parameter $\delta_r$ effectively controls the difficulty of the problem, with smaller values giving easier problems. 

All methods were initialized at the same point $[1~1~\ldots~1]^\top/d$. For
all the backtracking linesearch procedures except \cp\space, the initial
stepsize estimate is the previously discovered stepsize; at the first
iteration, the initial stepsize is $1$. For \cp\space we allowed the stepsize
to increase in accordance with \cite[Algorithm 4]{malitsky2018first}, as
performance was poor otherwise. The backtracking stepsize decrement factor
($\delta$ in Algorithm \ref{AlgBackTrack}) was $0.7$ for all algorithms.

For \onef\space and \twof, $\rho_1^k$ was discovered via backtracking. We also
set the other stepsize $\rho_2^k$ equal to $\rho_1^k$ at each iteration. While
this is not necessary, this heuristic performed well and eliminated $\rho_2^k$
as a separately tunable parameter. For the averaging parameters in
\texttt{ps1fbt}, we used $\alpha_1=0.1$ and $\alpha_2=1$ (which is possible because $L_2=0$).  For \texttt{ps1fbt} we set
$\hat{\theta}_1 = x_1^0$ and $\hat{w}_1 =2 Q x_1^0$.

\begin{table}
	\centering
	\begin{tabular}{c||c|c|c|c|c}
		&\multicolumn{4}{c}{$\delta_r$}
		\\
		&$0.5$&$0.8$&$1$ & $1.5$
		\\
		\hline	
		\hline	
		\onef\,($\gamma$)
		& $0.01$ & $0.01$ & $0.5$ & $5$
		\\
		\hline 
		\twof\,($\gamma$) & $0.1$ & $0.1$ & $10$  & $10$
		\\
		\hline 
		\cp\, ($\beta^{-1}$) & $1$ & $1$ & $2$ & $2$
		\\
		\hline 
		\tseng\, ($\gamma_{pd}$) & $1$ & $1$ & $1$  & $10$
		\\
		\hline 
		\frb\, ($\gamma_{pd}$) & $1$ & $1$ & $10$  & $10$
		\\
		\hline
		\hline  
	\end{tabular}
	\caption{Tuning parameters for the portfolio problem (\ada\space does not have a tuning parameter.)}
	\label{table-tune}
\end{table}

For \tseng\space and \frb, we used the following preconditioner:
\begin{align}\label{eq-precond}
U = \text{diag}(I_{d\times d},\gamma_{pd} I_{d\times d},\gamma_{pd} I_{d\times d})
\end{align}
where $U$ is used as in \cite[Eq.~(3.2)]{vu2013variable} for \tseng\space
($M^{-1}$ on \cite[p.~7]{tam2018forward} for \frb). In this case, the
``monotone + skew" primal-dual inclusion described in
\eqref{eqMonPlSk1}-\eqref{eqMonPlSk2} features two $d$-dimensional dual
variables in addition to the $d$-dimensional primal variable. The parameter
$\gamma_{pd}$ changes the relative size of the steps taken in the primal and
dual spaces, and plays a similar role to $\gamma$ in our algorithm (see
Algorithm \ref{AlgProjUpdate}). The parameter $\beta$ in
\cite[Algorithm~4]{malitsky2018first} plays a similar role for \cp. For all of
these methods, we have found that performance is highly sensitive to this
parameter: the primal and dual stepsizes need to be balanced.  The only method
not requiring such tuning is \ada, which is a purely primal method. With this
setup, all the methods have one tuning parameter except
\ada\space, which has none.  For each method, we manually tuned the parameter
for each $\delta_r$; Table~\ref{table-tune} shows the final choices.

We calculated the criterion $c(x)$ in \eqref{eq_ce} for $x_1^k$ computed by
\texttt{ps1fbt} and \texttt{ps2fbt}, $x_t$ computed on Line 3 of
\cite[Algorithm 1]{pedregosa2018adaptive} for \texttt{ada3op}, $y^{k}$
computed in  \cite[Algorithm 4]{malitsky2018first} for \cp, and the primal
iterate for \tseng\space and \frb.  Table~\ref{table-results} displays the
average number iterations and running time, over $10$ random trials, until
$c(x)$ falls (and stays) below $10^{-5}$ for each method. Examining the table,
\begin{itemize}
	\item For all four problems, \onef\space outperforms \twof. This behavior is not
	suprising, as \onef\space only requires one forward step per iteration,
	rather than two. Since the matrix $Q$ is large and dense, reducing the
	number of forward steps should have a sizable impact.
	\item For $\delta_r<1$, \onef\space is the best-performing method.
	However, for $\delta_r\geq 1$, \ada\space is the quickest.
\end{itemize}

\begin{table}
	\centering
	\begin{tabular}{c||c|c|c|c}
		&\multicolumn{4}{c}{$\delta_r$}
		\\
		&$0.5$&$0.8$&$1$&$1.5$\\ 
		\hline 
		\hline
		\onef & \textbf{3.6} (102) & \textbf{4.7} (102) & 16.3 (583) & 8.5 (255.2) \\
		\hline 
		\twof
		& 5.0 (151.1) & 7.9 (155) & 24.3 (523.4) & 9.2 (222.9) \\
		\hline 
		\ada & 5.3 (180.8) &9.2 (180.8) & \textbf{6.8} (174.3) & \textbf{3.4} (89.2) \\
		\hline 
		\cp& 6.2 (136) & 8.3 (134.3) & 11.8 (218.4) & 5.6 (113.6) \\
		\hline 
		\tseng
		&15.9 (387.1) &21 (387.8) & 25.7 (525.3) & 11.1 (245.4) \\
		\hline 
		\frb &10.5 (559.9) &16.4 (560.4) & 22.8 (1074.8) & 6.3 (350.8) \\
		\hline
		\hline  
	\end{tabular}
	\caption{For the portfolio problem, average running times in seconds and
	iterations (in parentheses) for each method until $c(x)<10^{-5}$ for all
	subsequent iterations across 10 trials. The best time in each column is in
	\textbf{bold}.}
	\label{table-results}
\end{table}

%Since $d$ is large and $Q$ is a large dense matrix, multiplication by $Q$ is the dominant computation at each iteration for each algorithm.
%While \texttt{ps1fbt} and \frb\space do a minimum of one forward evaluation of $B_1x = 2Q x$ at each iteration, all the other algorithms must do at least two.
%For \twof\space and \tseng, this is because the methods must do two forward evaluations per iteration. 
%The methods \texttt{ada3op} and \cp\space  only need  one forward evaluation per iteration but also need to compute the objective function as part of the linesearch termination check. 

%It also appears to outperform \texttt{ada3op} in terms of the number of $Q$-multiplies and subsequent running time. We believe this is mostly because \texttt{ada3op} is obliged to check the value $F(x_t)$ at each iteration and therefore must do two multiplies. However we should note that \texttt{ada3op} has the advantage that it has fewer tuning parameters than our method. 

\newcommand{\calG}{\mathcal{G}}
\subsection{Sparse Group Logistic Regression}
Consider the following problem:
\begin{align}
% F^*
% \triangleq
\min_{\substack{x_0\in\rR \\ x\in\rR^d}}
% F(x_0,x)
% \triangleq
\left\{
\sum_{i=1}^n
\log\!\Big(1+\exp\!\big(\!-y_i(x_0 + a_i^\top x)\big)\Big)
+
\lambda_1\|x\|_1
+
\lambda_2
\sum_{g\in\mathcal{G}}
\|x_g\|_2\right\},
\label{eq-group-lr}
\end{align}
where $a_i\in\rR^d$ and $y_i\in\{\pm 1\}$ for $i=1,\ldots,n$ are given data,
$\lambda_1,\lambda_2\geq 0$ are regularization parameters, and $\calG$ is a
set of subsets of $\{1,\ldots,d\}$ such that no element is in more than one
group $g\in\calG$. This is the non-overlapping group-sparse logistic
regression problem, which has applications in bioinformatics, image
processing, and statistics~\cite{simon2013sparse}. It is well understood that
the $\ell_1$ penalty encourages sparsity in the solution vector. On the other
hand the group-sparse penalty encourages \emph{group sparsity}, meaning that
as $\lambda_2$ increases more groups in the solution will be set entirely to
$0$. The group-sparse penalty can be used when the features/predictors can be
put into correlated groups in a meaningful way. As with the portfolio
experiment, this problem features two nonsmooth regularizers and so methods
like FISTA cannot easily be applied.

Problem \eqref{eq-group-lr} may be treated as a special case of
\eqref{ProbOpt} with $n=2$, $G_1=G_2=I$, and
\begin{align*}
h_1(x_0,x) &= \sum_{i=1}^n \log\!\Big(1+\exp\!\big(-y_i(x_0 + a_i^\top x)\big)\Big) &
h_2(x_0,x) &= 0 \\
f_1(x_0,x) &= \lambda_1\|x\|_1 &
f_2(x_0,x) &= \lambda_2 \sum_{g\in\mathcal{G}} \|x_g\|_2.
\end{align*}
Since the logistic regression loss has a Lipschitz-continuous gradient and the
$\ell_1$-norm and non-overlapping group-lasso penalties both have
computationally simple proximal operators, all our candidate methods may be
applied.

We applied~\eqref{eq-group-lr} to two bioinformatics classification problems
with real data. Following~\cite{simon2013sparse}, we use the breast cancer
dataset of \cite{ma2004two} and the inflammatory bowel disease (IBD) dataset
of~\cite{burczynski2006molecular}.\footnote{The breast cancer dataset is
available at \url{https://www.ncbi.nlm.nih.gov/geo/query/acc.cgi?acc=GSE1379}.
The IBD dataset is available at
\url{https://www.ncbi.nlm.nih.gov/geo/query/acc.cgi?acc=GSE3365}.} The breast
cancer dataset contains gene expression levels
%measured with the Arcturus 22k human oligonucleotide microarray
for 60 patients with estrogen-positive breast
cancer. The patients were treated with tamoxifen for 5 years and classified
based on whether the cancer recurred (there were 28 recurrences). The goal is
to use the gene expression values to predict recurrence.
The IBD data set contains gene expression levels
%measured with an affymetrix U133A microarray
for 127 patients, 85 of which have IBD. The IBD data set
actually features three classes: ulcerative  colitis (UC), Crohn's disease
(CD), and normal, and so the most natural goal would be to perform
three-way classification. For simplicity, we considered a two-way
classification problem of UC/CD patients versus normal patients.

For both datasets, as in \cite{simon2013sparse}, the group structure $\calG$
was extracted from the C1 dataset~\cite{subramanian2005gene}, which groups
genes based on cytogenetic position data.\footnote{The C1 dataset is available at
\url{http://software.broadinstitute.org/gsea/index.jsp}.} Genes that are in
multiple C1 groups were removed from the dataset.\footnote{Overlapping
group norms can also be handled with our method, but using a different problem
formulation than \eqref{eq-group-lr}.} We also removed genes that could not be
found in the C1 dataset, although doing so was not strictly necessary. After
these steps, the breast cancer data had 7,705 genes in 324 groups, with
each group having an average of 23.8 genes.  For the IBD data there were
19,836 genes in 325 groups, with an average of 61.0 genes per group. Let $A$ be
the data matrix with each row is equal to $a_i^\top\in\rR^{d}$ for
$i=1,\ldots,n$; as a final preprocessing step, we normalized the columns of
$A$ to have unit $\ell_2$-norm, which tended to improve the performance of the
first-order methods, especially the primal-dual ones.

%Following the advice of \cite{zhou2010association},
For simplicity
we set the regularization
parameters to be equal: $\lambda_1=\lambda_2\triangleq\lambda$.  In practice,
one would typically solve \eqref{eq-group-lr} for various values of $\lambda$
and then choose the final model based on cross-validation performance combined
with other criteria such as sparsity. Therefore, to give an overall sense of
the performance of each algorithm, we solved \eqref{eq-group-lr} for three
values of $\lambda$: large, medium, and small, corresponding to decreasing the
amount of regularization and moving from a relatively sparse solution to a
dense solution. For the breast cancer data, we selected
$\lambda\in\{0.05,0.5,0.85\}$ and for IBD we chose $\lambda\in \{0.1,0.5,1\}$.
The corresponding number of non-zero entries, non-zero groups, and training
error of the solution are reported in Table~\ref{table-train}. Since the goal
of these experiments is to assess the computational performance of the
optimization solvers, we did not break up the data into training and test
sets, instead treating the entire dataset as training data.

\begin{table}
	\centering
	\begin{tabular}{c||c|c|c||c|c|c|}
	&\multicolumn{3}{c||}{$\lambda$ (breast cancer)}&\multicolumn{3}{c|}{$\lambda$ (IBD)}\\
	 & 0.05&0.5&0.85&0.1&0.5&1.0\\
	\hline
	\# Nonzeros & 114 & 50 & 20&135&40&18\\
	\# Nonzero groups & 16 & 7 & 3&13&4&2\\
	Training error &0\%&5\%&35\%&0\%&5.5\%&26.8\%\\
	\hline
	\end{tabular}
\caption{The number of nonzeros and nonzero groups in the solution, along with the
training error, for each value of $\lambda$.}
\label{table-train}
\end{table}

We initialized all the methods to the $0$ vector. As in the portfolio problem,
all stepsizes were initially set to $1$. Since the logistic regression function
does not have uniform curvature, we allowed the initial trial stepsize in the
backtracking linesearch to increase by a factor of $1.1$ multiplied by the
previously discovered stepsize. The methods \onef, \cp, and \ada\space have an
upper bound on the trial stepsize at each iteration, so the trial stepsize was
taken to be the minimum of $1.1$ multiplied by the previous stepsize and this
upper bound.

Otherwise, the setup was the same as the portfolio experiment. \tseng\space
and \frb\space use the same preconditioner as given in \eqref{eq-precond}. For
\onef\space and \twof\space we set $\rho_2^k$ to be equal to the discovered
backtracked stepsize $\rho_1^k$ at each iteration.  For \texttt{ps1fbt} we
again set $\hat{\theta}_1 = x_1^0$, $\hat{w}_1 =\nabla h_1(x_1^0)$, and
$\alpha_1^k$ fixed to $0.1$. As such, all methods (except \ada) have one
tuning parameter which was hand-picked for each method; the chosen values
are given in Table \ref{table-tune2}.

\begin{table}
	\centering
	\begin{tabular}{c||c|c|c||c|c|c|c|}
		&\multicolumn{3}{c||}{$\lambda$ (breast cancer)}
		&
		\multicolumn{3}{c|}{$\lambda$ (IBD)}
		\\
		&0.05&0.5&0.85&0.1&0.5&1.0
		\\
		\hline
		\hline
		\onef\,($\gamma$)
		& $0.05$ & $10^2$ & $10^2$ &0.1&1&1
		\\
		\hline
		\twof\,($\gamma$) & $1$ & $10^2$ & $10^5$  &1&1&1
		\\
		\hline
		\cp\, ($\beta^{-1}$) & $10$ & $10^3$ & $10^4$ &$10^4$&$10^3$&$10^5$
		\\
		\hline
		\tseng\, ($\gamma_{pd}$) & $10^3$ & $10^5$ & $10^5$  &$10^4$&$10^6$&$10^6$
		\\
		\hline
		\frb\, ($\gamma_{pd}$) & $10^3$ & $10^5$ & $10^5$  &$10^4$&$10^6$&$10^6$
		\\
		\hline
		\hline
	\end{tabular}
	\caption{Tuning parameters for sparse group LR (\ada\space does not have a tuning parameter).}
	\label{table-tune2}
\end{table}

\begin{figure}[!htb]
	\minipage{0.32\textwidth}
	\includegraphics[width=\linewidth]{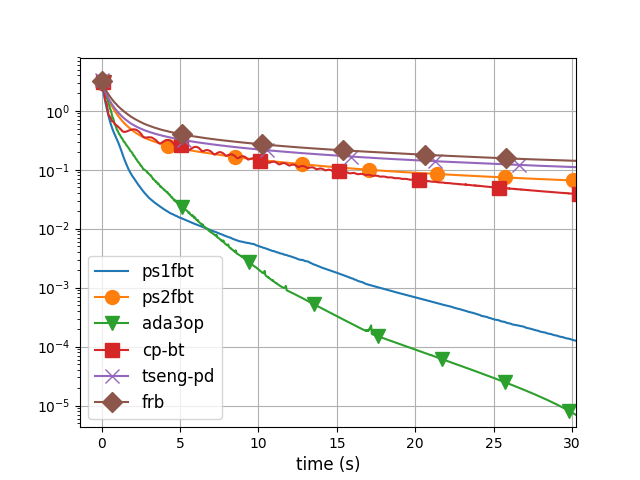}
	\endminipage\hfill
	\minipage{0.32\textwidth}
	\includegraphics[width=\linewidth]{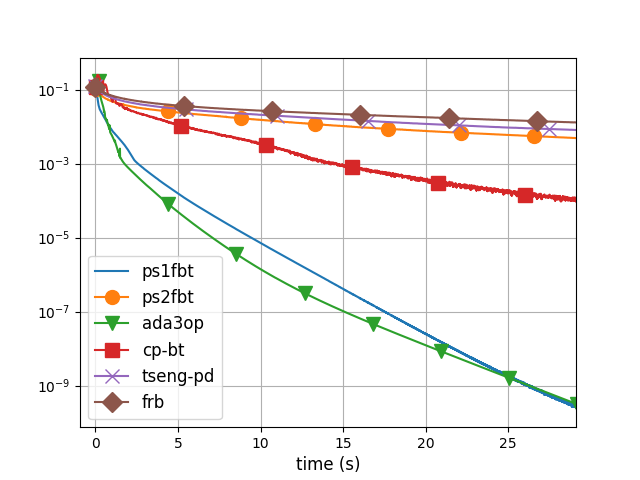}
	\endminipage\hfill
	\minipage{0.32\textwidth}
	\includegraphics[width=\linewidth]{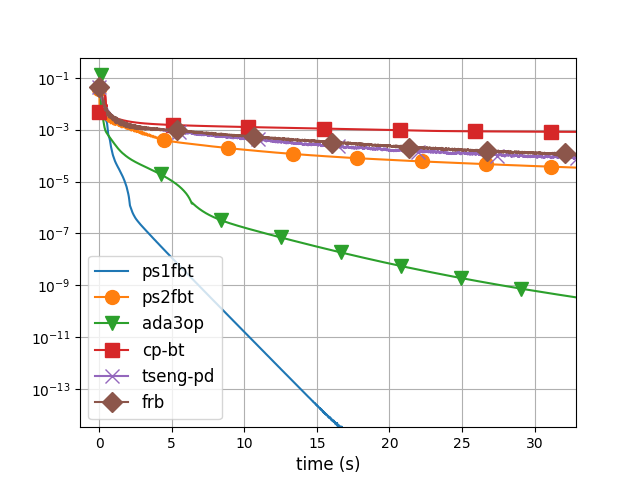}
	\endminipage\hfill
	\minipage{0.32\textwidth}
	\includegraphics[width=\linewidth]{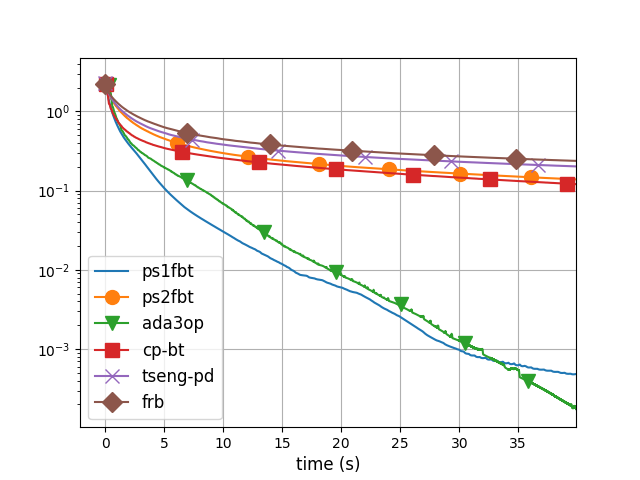}
	\endminipage\hfill
	\minipage{0.32\textwidth}
	\includegraphics[width=\linewidth]{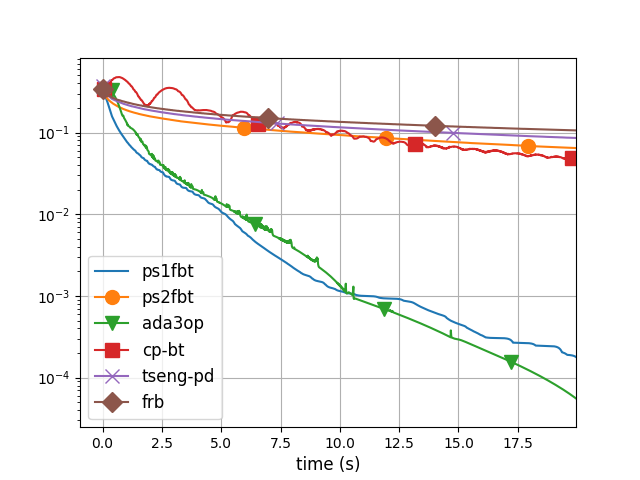}
	\endminipage\hfill
	\minipage{0.32\textwidth}%
	\includegraphics[width=\linewidth]{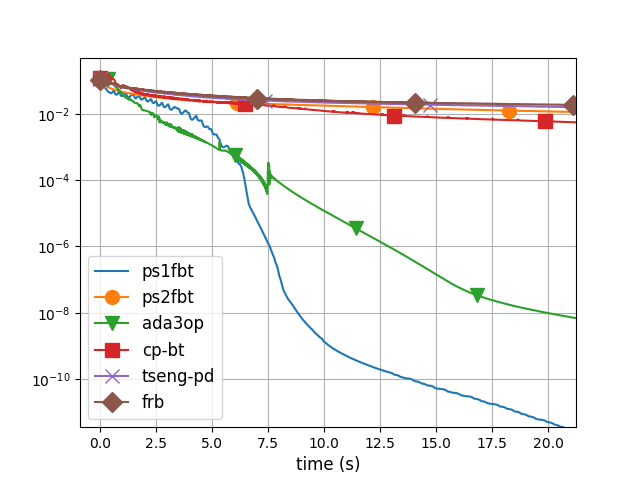}
	\endminipage
	\caption{Results for \eqref{eq-group-lr} applied to bioinformatics
	classification problems. The top row shows breast cancer data with left:
	$\lambda=0.05$; middle: $\lambda=0.5$; right: $\lambda=0.85$. The bottom row
	shows IBD data with left: $\lambda=0.1$; middle: $\lambda=0.5$; right:
	$\lambda=1.0$.  The $y$-axis is relative objective error: $\big(F(x_0,x)-F^*\big)/F^*$
	and the $x$-axis is elapsed running time in seconds.}
	\label{Fig-res1}
\end{figure}
%\begin{figure}[!htb]

%\end{figure}

Figure~\ref{Fig-res1} shows the results of the experiments, plotting
$(F(x_0,x)-F^*)/F^*$ against time for each algorithm, where $F$ is the
objective function in~\eqref{eq-group-lr} and $F^*$ is the estimated optimal
value. To approximate $F^*$, we ran each algorithm for 4,000 iterations and
took the lowest value obtained. Overall, \onef\space and \ada\space were much
faster than the other methods. For the highly regularized cases (the right
column of the figure), \onef\space was faster than all other methods. For
middle and low regularization, \onef\space and \ada\space are comparable, and
for $\lambda=0.05$ \ada\space is slightly faster for the the breast cancer
data. The methods \onef\space and \ada\space may be succesful because they
exploit the cocoercivity of the gradient, while \twof, \tseng,and \frb\space
only treat it as Lipschitz continuous. \cp\space also exploits cocoercivity,
but its convergence was slow nonetheless. We discuss the performance of \onef\space versus \twof\space more in Section \ref{sec1v2}.

\subsection{Final Comments: \onef\space versus \twof}\label{sec1v2}
On the portfolio problem, \onef\space and \twof\space have
fairly comparable performance, with \onef\space being slightly faster.
However, for the group logistic regression problem, \onef\space is
significantly faster. Given that both methods are based on the same projective
splitting framework but use different forward-step procedures to
update $(x_1^k,y_1^k)$, this difference may be somewhat surprising. Since
\onef\space only requires one forward step per iteration while \twof\space
requires two, one might expect
\onef\space to be about twice as fast as \twof. But for the group logistic
regression problem, \onef\space significantly outpaces this level of performance.

Examining the stepsizes returned by backtracking for both methods reveals that
\onef\space returns much larger stepsizes for the logistic regression problem,
typically $2$-$3$ orders of magnitude larger; see Figure~\ref{Fig-steps}. For
the portfolio problem, where the performance of
the two methods is more similar, this is not the case: the \onef\space
stepsizes are typically about twice as large as the \twof\space stepsizes, in
keeping with their theoretical upper bounds of $1/L_i$ and $2(1-\alpha_i)/L_i$,
respectively.

Note that the portfolio problem has a smooth function
which is quadratic and hence has the same curvature everywhere, while
group logisitic regression does not. We hypothesize that the backtracking
scheme in \onef\space does a better job adapting to nonuniform
curvature. A possible reason for this behavior is that the termination criterion for
the backtracking search in \onef\space may be weaker than for \twof. For
example, while \twof\space requires $\varphi_{i,k}$ to be positive at each
iteration $k$ and operator $i$, \onef\space does not.

\begin{figure}
	\minipage{0.49\textwidth}
	\includegraphics[width=\linewidth]{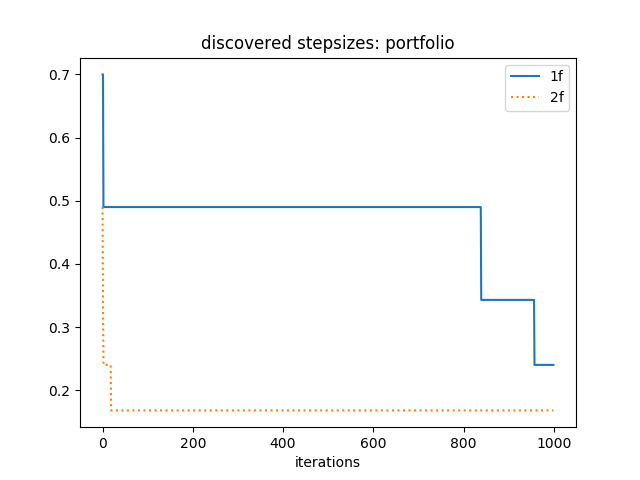}	
	\endminipage\hfill
	\minipage{0.49\textwidth}
	\includegraphics[width=\linewidth]{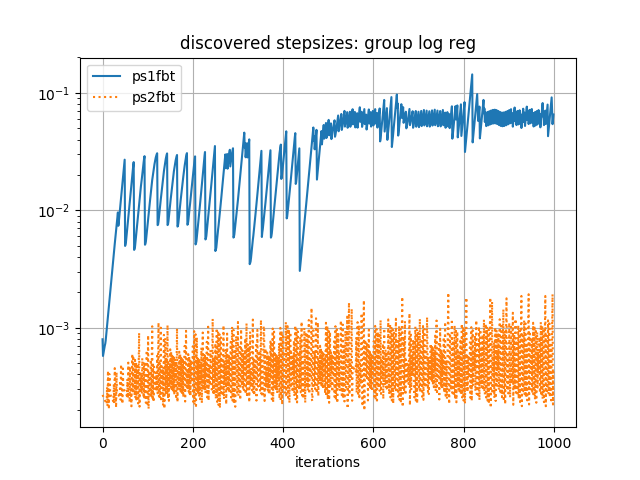}	
	\endminipage\hfill
	\caption{Discovered backtracking stepsizes for \onef\space and \twof\space. Left: portfolio problem with $\delta_{r}=0.5$. Right: group logistic regression problem applied to the IBD data with $\lambda=1$.}
	\label{Fig-steps}
\end{figure}

\subsection*{Acknowledgments}
%We thank Xiaohan Yan and Jacob Bien for kindly sharing their data for the
%TripAdvisor reviews problem in Section \ref{sec_rare}.

This research was supported by the National Science Foundation grant CCF-1617617.

\bibliographystyle{spmpsci}
\bibliography{refs}

%\appendix
%\input{sequenceLemmas}

\end{document}